\newtheorem{theorem}{Theorem}[section]
\newtheorem{lemma}[theorem]{Lemma}
\newtheorem{proposition}{Proposition}[section]
\newtheorem{corollary}{Corollary}[section]
\theoremstyle{definition}
\newtheorem{definition}[theorem]{Definition}
\theoremstyle{remark}
\newtheorem{remark}[theorem]{Remark}
\numberwithin{equation}{section}
\begin{document}
\title[Quantum BGK model near a global Fermi-Dirac distribution]{Quantum BGK model near a global Fermi-Dirac distribution}

\author{Gi-Chan Bae}
\address{Department of mathematics, Sungkyunkwan University, Suwon 440-746, Republic of Korea }
\email{gcbae02@skku.edu}

\author{Seok-Bae Yun}
\address{Department of mathematics, Sungkyunkwan University, Suwon 440-746, Republic of Korea }
\email{sbyun01@skku.edu}



\keywords{Quantum BGK model, Quantum Boltzmann equation, Fermi-Dirac distribution, Nonlinear energy method}

\begin{abstract}
In this paper, we consider the existence and asymptotic behavior of the fermionic quantum BGK
model, which is a relaxation model of the quantum Boltzmann equation for fermions. More precisely,
we establish the existence of unique classical solutions and their exponentially fast stabilization  when  the initial data starts sufficiently close to a global Fermi-Dirac distribution.
A key difficulty unobserved in the study of the classical BGK model is that we must verify that the equilibrium parameters is uniquely determined through a set of nonlinear equations in each iteration step. 
\end{abstract}

\maketitle
\section{Introduction}
\subsection{Quantum BGK model} 
The quantum modification of the celebrated Boltzmann equation was first suggested in \cite{FN,KN,U1,U2}, which often goes by the name of Uehling-Uhlenbeck equation or Nordheim equation.
But the intricate structure of the collision operator complicates the computations and understanding of quantum transport properties,
and the relaxation time approximation are widely used in physics and engineering 
\cite{Ashcroft,Chap,FJ,Ihn,Jungel-Quasi,Jungel-transport,Jin,Khal,Kittel,MO,MRS,N,RS,YMCL,YYDHLZZ}:
\begin{align}\label{QBGK}
\begin{split}
\partial_tF+p\cdot\nabla_x F&=\frac{1}{\tau}(\mathcal{F}(F)-F), \cr
F(x,p,0)&=F_0(x,p).
\end{split}
\end{align}
Here $F(x,p,t)$ is the number density function on phase point $(x,p)\in\mathbb{T}^3\times \mathbb{R}^3$ at time $t\in\mathbb{R}_+$.
$\tau$ is the relaxation time. The Fermi-Dirac distribution $\mathcal{F}(F)$, which is the quantum counterpart of the classical Maxwellian for fermions is defined by the
following process: First, we define the macroscopic fields of local density, momentum and  energy:
\begin{align}\label{NPE}
\begin{split}
N(x,t)&=\int_{\mathbb{R}^3}F(x,p,t)dp,\cr
P(x,t)&=\int_{\mathbb{R}^3}F(x,p,t)pdp,\cr
E(x,t)&=\int_{\mathbb{R}^3}F(x,p,t)|p|^2dp.
\end{split}
\end{align}
We then define the equilibrium constants: First, we derive $c(x,t)$ from the following nonlinear functional equation:
\begin{align}\label{a,c1}
\begin{split}
\frac{N(x,t)}{\left(E(x,t)-\frac{P^2(x,t)}{N(x,t)}\right)^{\frac{3}{5}}}= \frac{\displaystyle\int_{\mathbb{R}^3}\frac{1}{e^{|p|^2+c(x,t)}+1}dp}{\displaystyle\left(\int_{\mathbb{R}^3}\frac{|p|^2}{e^{|p|^2+c(x,t)}+1}dp\right)^{\frac{3}{5}}}.
\end{split}
\end{align}
In view of this relation, we define $\beta(c)$ and $B(N,P,E)$ for later convenience as
\begin{align}\label{beta}
\beta(c)= \frac{\int_{\mathbb{R}^3}\frac{1}{e^{|p|^2+c}+1}dp}{\left(\int_{\mathbb{R}^3}\frac{|p|^2}{e^{|p|^2+c}+1}dp\right)^{\frac{3}{5}}},\quad
B(N,P,E)=\frac{N(x,t)}{\left(E(x,t)-\frac{P(x,t)^2}{N(x,t)}\right)^{\frac{3}{5}}}.
\end{align}
Once  $c(x,t)$ is determined by the relation (\ref{a,c1}), we define $a(x,t)$ by
\begin{align}\label{a,c2}
\displaystyle a(x,t)&=\left({\int_{\mathbb{R}^3}\frac{1}{e^{|p|^2+c(x,t)}+1}dp}\right)^\frac{2}{3}N(x,t)^{-\frac{2}{3}} .
\end{align}
It will be shown later that (\ref{a,c1}) and (\ref{a,c2}) uniquely determines $c$ under additional conditions (See Theorem \ref{unique c}).

The Fermi-Dirac distribution is now defined as follows:
\begin{align}\label{FD local}
\displaystyle\mathcal{F}(F)(x,p,t)=\frac{1}{e^{a(x,t)\big|p-\frac{P(x,t)}{N(x,t)}\big|^2+c(x,t)}+1}.
\end{align}
The relaxation opeartor of the quantum-BGK model satisfies the following cancellation property (See Section 2).
\begin{align}\label{cancelation}
\int_{\mathbb{R}^3}\mathcal{F}(F)(x,p,t)\left(\begin{array}{c}1\cr p\cr |p|^2\end{array}\right)dp
=\int_{\mathbb{R}^3}F(x,p,t)\left(\begin{array}{c}1\cr p\cr |p|^2\end{array}\right)dp,
\end{align}
which implies the conservation laws of $N(x,t)$, $P(x,t)$ and $E(x,t)$:
\begin{align}
\begin{split}\label{Conservation}
\int_{\mathbb{T}^3\times \mathbb{R}^3}F(t) dxdp &= \int_{\mathbb{T}^3\times \mathbb{R}^3}F_0 dxdp, \cr
\int_{\mathbb{T}^3\times \mathbb{R}^3}F(t)p dxdp &= \int_{\mathbb{T}^3\times \mathbb{R}^3}F_0p dxdp, \cr
\int_{\mathbb{T}^3\times \mathbb{R}^3}F(t)|p|^2 dxdp &= \int_{\mathbb{T}^3\times \mathbb{R}^3}F_0|p|^2 dxdp .
\end{split}
\end{align}
The following celebrated $H$-theorem was established in \cite{WMZ} :
\begin{align*}
\frac{d}{dt}H(F(t)) \leq 0,
\end{align*}
where the $H$-functional is defined by
\begin{align*}
H(F)=\int_{\mathbb{T}^3\times\mathbb{R}^3}F\ln F+(1-F)\ln (1-F) dxdp.
\end{align*}
We note that the $H$-functional is minimized when $F$ is a Fermi-Dirac distribution (See Sec. 2).\newline

The relaxation time $\tau$ can take various different forms depending on the physical situations, but usually given as an energy dependent, and hence, temperature dependent function.  Through out this paper, we assume that the relaxation time takes the following form:
\begin{align}\label{relaxation time0}
\frac{1}{\tau} = P(N)(C_1T^n+C_2T^m+C_3)+C_4,
\end{align}
where $T$ denotes the local temperature, and $P$ is a homogeneous generic polynomial and $m,n, C_i~(i=1,2,3,4)$ satisfies
\begin{align*}
n\geq0,~m\leq0,~C_i\geq0, ~\sum C_i\neq 0.
\end{align*}
Since the temperature and the equilibrium coefficients given in (\ref{a,c2}) are related by $T=(k_B a)^{-1}$ through the Boltzmann constant $k_B$ \cite{Jin}, we rewrite (\ref{relaxation time0}) as
\begin{align}\label{relaxation time}
\frac{1}{\tau} = P(N)(C_1a^n+C_2a^m+C_3)+C_4.
\end{align}
This encompass a wide range of the expressions for the relaxation time in the literature \cite{BCCHOY,Brauk1,HF,Iti,Jin,MK,N,RMR,Hubb,Spa,YMCL,YYDHLZZ}.
\subsection{Novelty and difficulty} The goal of this paper is to establish the existence of classical solutions and their asymptotic behavior using the nonlinear energy method \cite{Guo whole,Guo VMB,Guo VPB}, when the initial data lies close to a global Fermi-Dirac distribution:
\begin{align}\label{global FD}
m(p)&=\frac{1}{e^{a_0|p|^2+c_0}+1},
\end{align}
where $a_0$ and $c_0$ are determined by the following relation:
\begin{align}\label{by}
\frac{N_0}{\left(E_0-\frac{P_0^2}{N_0}\right)^{\frac{3}{5}}}=\frac{\int_{\mathbb{R}^3}\frac{1}{e^{|p|^2+c_0}+1}dp}{\left(\int_{\mathbb{R}^3}\frac{|p|^2}{e^{|p|^2+c_0}+1}dp\right)^\frac{3}{5}},	\quad
a_0=\left({\int_{\mathbb{R}^3}\frac{1}{e^{|p|^2+c_0}+1}dp}\right)^\frac{2}{3}N_0^{-\frac{2}{3}}.
\end{align}
Here  $N_0$, $P_0$ and $E_0$ are defined as in (\ref{NPE}) from the initial data:
\begin{align}\label{NPE0}
N_0=\int_{\mathbb{T}^3\times \mathbb{R}^3}F_0 dxdp,\quad P_0=\int_{\mathbb{T}^3\times \mathbb{R}^3}F_0p dxdp, \quad E_0=\int_{\mathbb{T}^3\times \mathbb{R}^3}F_0|p|^2 dxdp.
\end{align}
Note that $P_0=0$.\newline

For this, we decompose $F$ into the equilibrium and  the perturbation as
\begin{align}\label{novel decomposition}
F=m+\sqrt{m-m^2}f,
\end{align}
and write (\ref{QBGK}) as follows:
\begin{align*}
\partial_tf+p\cdot\nabla_x f&=Lf+\Gamma(f), \cr
f(x,p,0)&=f_0(x,p),
\end{align*}
where $L$ denotes the linearized relaxation operator:
\begin{align*}
Lf= Pf-f,
\end{align*}
and $\Gamma(f)$ is nonlinear term. (Precise definitions is in section 2.)
$P$ is macroscopic projection operator for $f$ on the five-dimensional linear space spanned by
\begin{align}\label{null}
\big\{\sqrt{m-m^2},p_1\sqrt{m-m^2},~p_2\sqrt{m-m^2},p_3\sqrt{m-m^2},|p|^2\sqrt{m-m^2}~\big\}.
\end{align}


We take $\sqrt{m-m^2}$ as the weight function in the perturbation instead of usual $\sqrt{m}$, to treat the nonlinear structure  $\mathcal{F}-\mathcal{F}^2$ arising from the differentiation of the local Fermi-Dirac distribution with respect to the macroscopic fields: $N$, $P$ and $E$. Such nonlinear structure turns out to be inconsistent with the conventional weight $\sqrt{m}$,
and the choice of weight function $\sqrt{m-m^2}$ enables one to resolve such inconsistence, leading to the desired dissipative structure of $L$. Similar observation was made in \cite{Lemou,Liu} for quantum Landau equations (See Section 3).

On the other hand, we see that the equation (\ref{QBGK}) is well-defined only when we are able to find the equilibrium coefficients $a$ and $c$ uniquely from (\ref{cancelation}).
In view of this, we must guarantee that the functional relations (\ref{a,c1}) and (\ref{a,c2}) uniquely determine the equilibrium coefficients  in each iteration step.
We accomplish this by $1)$ proving in Proposition \ref{betathm} that the function $\beta(x)$ defined in (\ref{beta}) is strictly decreasing if we restrict $x$ to $(-\ln3,\infty)$:
\[
\beta^{\prime}(x)<0\quad \mbox{for }x>-\ln3,
\]
and $2)$ showing that the l.h.s of (\ref{a,c1}) falls into the range of $\beta$ for each $n$:
\begin{align*}
0<B(N_n,P_n,E_n) < \beta(-\ln3),
\end{align*}
if such inequality is satisfied initially, and the high-order energy are kept sufficiently small for each iteration.
This enables us to find a unique $c_n$ in the region $(-\ln3,\infty)$ and, in turn, $a_n$ so that we can proceed to the next iteration step (See Section 5.2).
\subsection{Main results}
We first need to set up some notational conventions.
\begin{itemize}
\item  The constants in the estimates will be defined generically.
\item $\langle \cdot,\cdot\rangle_{L^2_{p}}$ and $\langle\cdot,\cdot\rangle_{L^2_{x,p}}$ denote the standard $L^2$ inner product on $\mathbb{R}^3_p$ and  $\mathbb{T}^3_x \times \mathbb{R}^3_p$ respectively.
\begin{align*}
\langle f,g\rangle_{L^2_{p}}=\int_{\mathbb{R}^3}f(p)g(p)dp,	\quad\langle f,g\rangle_{L^2_{x,p}}=\int_{\mathbb{T}^3\times\mathbb{R}^3}f(x,p)g(x,p)dxdp.
\end{align*}
\item $||\cdot||_{L^2_p}$ and $||\cdot||_{L^2_{x,p}}$ denote the standard $L^2$ norms in $\mathbb{R}^3_p$ and $\mathbb{T}^3_x \times \mathbb{R}^3_p$ respectively:
\begin{align*}
||f||_{L^2_p}\equiv \left(\int_{\mathbb{R}^3}|f(p)|^2 dp\right)^{\frac{1}{2}},	\quad||f||_{L^2_{x,p}}\equiv \left(\int_{\mathbb{T}^3\times\mathbb{R}^3}|f(x,p)|^2 dxdp\right)^{\frac{1}{2}}.
\end{align*}
\item We use the following notations for multi-indices, differential operators:
\begin{align*}
\alpha=[\alpha_0,\alpha_1,\alpha_2,\alpha_3], \quad \beta=[\beta_1,\beta_2,\beta_3],
\end{align*}
and
\begin{align*}
\partial^{\alpha}_{\beta}=\partial_t^{\alpha_0}\partial_{x_1}^{\alpha_1}\partial_{x_2}^{\alpha_2}\partial_{x_3}^{\alpha_3}\partial_{p_1}^{\beta_1}\partial_{p_2}^{\beta_2}\partial_{p_3}^{\beta_3}.
\end{align*}
\end{itemize}

We define the high-order energy functional $\mathcal{E}(f(t))$(or $\mathcal{E}(t)$):

\begin{align*}
\mathcal{E}(f(t))=\sum_{|\alpha|+|\beta|\leq N}||\partial^{\alpha}_{\beta}f(t)||^2_{L^2_{x,p}}.
\end{align*}
We are ready to state our main result.
\begin{theorem}\label{mainthm}
Let $N\geq 3$. Suppose that $F_0= m+\sqrt{m-m^2}f_0 \geq 0$ satisfies
\begin{align}\label{suppini}
\frac{N_0}{E_0^{3/5}}<\beta(-\ln3).
\end{align}
Then there exists positive constant $\delta$ and $C$, such that if $\mathcal{E}(f_0)\leq\delta$, then there exists a unique global solution $F$ to (\ref{QBGK}) such that
\begin{enumerate}
\item The distribution function $F$ is non-negative for all $t>0$:
\begin{align*}
F=m+\sqrt{m-m^2}f \geq 0,
\end{align*}
and satisfies
\[
 0<B(N,P,E)<\beta(-\ln3).
\]
\item The conservation laws (\ref{Conservation}) hold.

\item The high order energy functional $\mathcal{E}(f(t))$ is uniformly bounded:
\begin{align*}
\sup_{t\in \mathbb{R}_+}\mathcal{E}(f(t))\leq C\mathcal{E}(f_0).
\end{align*}

\item The perturbation decays exponentially fast:
\begin{align*}
\sum_{|\alpha|+|\beta|\leq N}\|\partial^{\alpha}_{\beta}f(t)\|_{L^2_{x,p}} \leq Ce^{-\epsilon t},
\end{align*}

for some positive constants $C$ and $\epsilon$.

\end{enumerate}
\end{theorem}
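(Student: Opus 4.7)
The strategy combines a Picard-type iteration with Guo's nonlinear energy method, while at every step safeguarding the unique solvability of the equilibrium parameters $(a,c)$ in (\ref{a,c1})--(\ref{a,c2}). Starting from $f^0\equiv 0$ and $F^0\equiv m$, I would build a sequence $F^{n+1}=m+\sqrt{m-m^2}\,f^{n+1}$: given $F^n$ with macroscopic fields $(N^n,P^n,E^n)$, one first produces $(a^n,c^n)$ via Proposition \ref{betathm}, and then advances $f^{n+1}$ through the linear transport--relaxation equation
\[
\partial_t f^{n+1}+p\cdot\nabla_x f^{n+1}+\frac{1}{\tau^n}f^{n+1}=\frac{1}{\tau^n}\,\frac{\mathcal{F}(F^n)-m}{\sqrt{m-m^2}},\qquad f^{n+1}(0)=f_0.
\]
Solving along characteristics produces a non-negative $F^{n+1}$ because $F_0\geq 0$, $\mathcal{F}(F^n)\in(0,1)$, and $1/\tau^n\geq 0$. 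The step that has no analogue in the classical BGK iteration is the need to verify, at every $n$, that $B(N^n,P^n,E^n)\in(0,\beta(-\ln 3))$ so that Proposition \ref{betathm} delivers a unique $c^n$; I would handle this by induction, converting the smallness of the high-order energy into Sobolev/Lipschitz control of the macroscopic fields and hence of $B$ around $(N_0,0,E_0)$, and invoking the initial admissibility (\ref{suppini}).

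\emph{Uniform energy estimates.} Apply $\partial^{\alpha}_{\beta}$ with $|\alpha|+|\beta|\leq N$ to the perturbation equation and pair with $\partial^{\alpha}_{\beta}f^{n+1}$. The linearized BGK operator $Lf=\mathbf{P}f-f$, with $\mathbf{P}$ the projection onto the five-dimensional space (\ref{null}), is explicitly coercive: $-\langle Lf,f\rangle_{L^2_p}=\|(I-\mathbf{P})f\|_{L^2_p}^{2}$, which dissipates the microscopic part $(I-\mathbf{P})f$. The macroscopic part $\mathbf{P}f$ is controlled through Guo's macroscopic-equation scheme: differentiating the conservation laws (\ref{Conservation}) against test functions in the null space yields elliptic-type identities whose solvability is secured by $P_0=0$ and the Poincar\'e inequality on $\mathbb{T}^3$, and a properly tuned interaction functional (coupling spatial derivatives of the five macroscopic coefficients with the transport term $p\cdot\nabla_x$) injects $\|\mathbf{P}f\|^{2}$ into the overall dissipation. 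The nonlinear term $\Gamma(f^n)$, which comes from the implicit dependence of $(a^n,c^n)$ on $(N^n,P^n,E^n)$ through (\ref{a,c1})--(\ref{a,c2}), is quadratic with $\sqrt{m-m^2}$-weighted coefficients; Sobolev embedding for $N\geq 3$ gives $|\Gamma|$-contributions bounded by $\sqrt{\mathcal{E}(f^n)}$ times the dissipation, which are absorbed once $\mathcal{E}(f_0)$ is small, yielding a closed differential inequality
\[
\frac{d}{dt}\mathcal{E}(f^{n+1})+\epsilon\,\mathcal{E}(f^{n+1})\leq 0
\]
uniformly in $n$.

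\emph{Convergence and conclusions.} A parallel difference estimate (writing the equation for $f^{n+1}-f^{n}$ and using Lipschitz dependence of $(a^n,c^n)$ on the macroscopic fields) shows that $\{f^n\}$ is Cauchy in the energy norm, producing a unique limit $f$ satisfying (\ref{QBGK}). Non-negativity $F\geq 0$ and the conservation laws (\ref{Conservation}) pass to the limit; Gronwall on the uniform inequality above delivers both the uniform bound $\sup_{t}\mathcal{E}(f(t))\leq C\mathcal{E}(f_0)$ and the exponential decay. Because the uniform smallness of $\mathcal{E}$ propagates the admissibility $B(N,P,E)<\beta(-\ln 3)$ for all $t\geq 0$, the reconstruction of $(a,c)$ via Proposition \ref{betathm} remains valid throughout, so item (1) of the theorem holds.

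\emph{Main obstacle.} The chief difficulty, and the feature absent from the classical BGK setting, is that $(a,c)$ are defined only implicitly, through the transcendental relation (\ref{a,c1}). One must not only appeal to the restricted monotonicity in Proposition \ref{betathm}, but also stably propagate $B(N,P,E)<\beta(-\ln 3)$ along the iteration and quantify how perturbations of $(N,P,E)$ translate into $H^N$-Lipschitz bounds on $(a,c)$; without such quantitative control, the quadratic remainder $\Gamma$ and the coupling between $F^n$ and $F^{n+1}$ cannot be closed. A secondary but essential point is verifying that the unconventional weight $\sqrt{m-m^2}$ in the decomposition (\ref{novel decomposition}) is precisely what matches the Fermi-Dirac derivatives $\mathcal{F}(1-\mathcal{F})$ and produces the clean coercive structure $-\langle Lf,f\rangle=\|(I-\mathbf{P})f\|^{2}$; once this algebraic compatibility is confirmed, the remainder of the argument adapts the standard Guo framework.
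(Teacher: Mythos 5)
Your outline follows the paper's own route: the decomposition $F=m+\sqrt{m-m^2}f$, a relaxation-type iteration in which at every step one must check $0<B(N_n,P_n,E_n)<\beta(-\ln 3)$ so that Proposition \ref{betathm} together with Theorem \ref{unique c} produces unique $(a_n,c_n)$, the coercivity $\langle Lf,f\rangle_{L^2_{x,p}}=-\|(I-P)f\|^2_{L^2_{x,p}}$, the micro-macro recovery of the macroscopic dissipation, and nonlinear estimates absorbed by smallness of $\mathcal{E}$. Up to inessential details (you start the iteration from $F^0=m$ rather than $F^0=F_0$), this is exactly the scheme of Sections 5 and 6, including the point you correctly single out as the main novelty, namely the propagation of the admissibility condition for $B$ along the iteration.

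The one step that would fail as written is the claim that pairing the iterate equation with $\partial^{\alpha}_{\beta}f^{n+1}$ yields a closed inequality $\frac{d}{dt}\mathcal{E}(f^{n+1})+\epsilon\,\mathcal{E}(f^{n+1})\leq 0$ uniformly in $n$. In the iteration (\ref{iter})--(\ref{fn}) the relaxation term is split as $\frac{1}{\tau(F^n)}(\mathcal{F}(F^n)-F^{n+1})$, so the right-hand side of the equation for $f^{n+1}$ contains $Pf^{n}$ and $\Gamma$-terms at level $n$, not $Lf^{n+1}$; there is no mechanism dissipating the macroscopic part of $f^{n+1}$ at fixed $n$, and the best one can close is an inequality of the form $\frac{d}{dt}\mathcal{E}(f^{n+1})\leq C\,\mathcal{E}(f^{n})+\dots$, i.e.\ uniform boundedness on a short time interval. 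Moreover, the macroscopic estimate that reinstates $\|Pf\|^2$ into the dissipation uses the micro-macro balance laws of Section 6 and the conservation laws (\ref{conservf}), which hold for an actual solution of the nonlinear equation rather than for individual iterates. This is precisely how the paper organizes the argument: Lemma \ref{5.2} only proves $\sup_{[0,T_*]}\mathcal{E}(f^{n+1})\leq M_0$ (which already suffices to keep $B(N_n,P_n,E_n)\in(0,\beta(-\ln3))$ and hence the solvability of $(a_n,c_n)$ at every step), local existence follows by a standard difference argument, and the dissipative inequality with the full coercivity (\ref{coer}) is derived afterwards for the limit $f$, with global existence and exponential decay obtained by the continuity argument. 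With this reorganization your plan coincides with the paper's proof; as stated, the uniform-in-$n$ decay inequality is not available.
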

\begin{remark}
It states that if $B(N_0,P_0,E_0)$ lies in $(0,\beta(-\ln3))$, then $B(N,P,E)$ also lies in $(0,\beta(-\ln3))$.
It is only under such restriction that we are able to conclude that the local Fermi-Dirac distribution is uniquely determined to satisfied the conservation laws (See Section 2).
\end{remark}
\subsection{Brief history}
The prototype of relaxation type models in quantum theory can be traced back to early 1900s when Drude successfully explained the
fundamental transport properties of electrons such as the Ohm's law or Hall effect using his relaxation model. Ever since, relaxational approximations has been a popular tool in quantum and condensed matter physics to understand various transport phenomena.
Despite such popularity of the quantum relaxation model in physics and engineering, the mathematical research on the model has a rather short history, and most of the important problems remain unanswered. We refer to \cite{N} for the study on a stationary problem for bosonic quantum BGK model with modified condensation ansantz. In \cite{Brauk1,Brauk2}, the author considers the existence and asymptotic behavior of analytic solutions for a BGK type model
arising in the study of the cloud of ultra-cold atoms in an optical lattice.
These results seem to be the all existence results known so far for quantum BGK models.
For numerical computations for quantum BGK models, we refer to \cite{CM,F2,F1,MY,Ringhofer,SFB,SY,WMZ,YH,Yano}.

Literature on quantum Boltzmann equations, especially in the case of free quantum particles, are much richer.
For studies in the spatially homogeneous regime, we refer to \cite{AG,BE,EMV,EV,LL,LuB1,LuB7,LuB3,LuB5,LuB6,LuB2,LuB4}  for bosonic gas, and  \cite{EMV,LuF1,LuFR1,LuFR2,LuW} for fermions.
Linearized problem for the spatially homogeneous quantum Boltzmann equation were investigated in \cite{ET,EMVela,EMVela2}.
In the case of spatially inhomogeneous case, the existence of mild solution and its long time behavior is obtained by Dolbeault in \cite{D}. Lions derived
the existence of renormalized solution in \cite{Lions}.  Allemand considered conservation laws and hydrodynamic limits in \cite{All2}.
The Quantum Boltzmann equation in spatially decaying regime was investigated in \cite{ZL1,ZL2} for existence and long time behavior and in \cite{Ha} for uniform $L^1$ stability estimate.
For the derivation of quantum Boltzmann equation, see \cite{BCEP,Ca,H}. Quantum hydrodynamic models limit considered in \cite{All1,DR,Za}.
Studies on Wigner-Poisson type equation can be found in \cite{Ar,BM,Ill,LTZ,LTZ2,LZ,M,Nt}.
We refer to  \cite{BGK,CZ,KP,Mischler,Perthame,PP,RSY,WZ,Yun1,Yun11,Yun2,Yun3,Yun4,Zhang,ZH} for mathematical results on classical BGK models.
Nice survey on classical or quantum kinetic equations can be found in \cite{Chap,C,CIP,GL,Sone,Sone2,Stru-book,U-T,V}.\newline





This paper is organized as follows: In Section 2, we study the  well-posedness problem for the Fermi-Dirac distribution. In section 3, relaxation operator is linearized around a global Fermi-Dirac distribution. In section 4, we present a priori estimates for macroscopic quantities and equilibrium coefficients. In section 5, local in time existence and uniqueness is derived. Finally, we prove our main theorem in Section 6.

%
%
%
%
%
%
\section{Monotonicity of $\beta$}
In this section, we consider the problem of determination of the equilibrium coefficients $a$ and $c$ in the local Fermi-Dirac distribution.
For this, we study the minimization problem of $H$-functional
\begin{align*}
H(F)=\int_{\mathbb{R}^3}(1-F)\ln(1-F)+F\ln F dp
\end{align*}
under the constraints of (\ref{cancelation}).
The corresponding Euler-Lagrange equation is
\begin{align*}
\ln{\frac{F}{1-F}} +\lambda_1+(\lambda_2,\lambda_3,\lambda_4)\cdot p + \lambda_5|p|^2 = 0,
\end{align*}
which can be rewritten as
\begin{align*}
\displaystyle F(p)=\frac{1}{e^{\lambda_1+(\lambda_2,\lambda_3,\lambda_4)\cdot p + \lambda_5|p|^2} +1}.
\end{align*}
It remains to choose $\lambda_i$ $(i=1,...,5)$  so that $\mathcal{F}$ shares the zeroth, first, and second moments with $F$ as in (\ref{cancelation}).
For simplicity, we reparametrize $\lambda_1,...,\lambda_5$  to write the Fermi-Dirac distribution as follows:
\begin{align*}
\mathcal{F}(p)&=\frac{1}{e^{a|p-b|^2+c}+1},
\end{align*}
for $a\in\mathbb{R}^+$ , $b\in\mathbb{R}^3$, $c\in\mathbb{R}$.
We now check whether  $a$, $b$, $c$ can be uniquely determined by $N$, $P$, $E$. First, by making a change of variable $\sqrt{a}(p-b)\rightarrow p$, we get from the first line of (\ref{cancelation}) that
\begin{align}\label{N}
N(x,t)=\int_{\mathbb{R}^3}\frac{1}{e^{a|p-b|^2+c}+1}dp	
=a^{-\frac{3}{2}}\int_{\mathbb{R}^3}\frac{1}{e^{|p|^2+c}+1}dp.
\end{align}
Similarly, we make change of variable $p-b\rightarrow p$ and use the oddness of $p/(e^{a|p|^2+c}+1)$ to write the second line of (\ref{cancelation}) as
\begin{align*}
P(x,t)=\int_{\mathbb{R}^3}\frac{p}{e^{a|p-b|^2+c}+1}dp	
=\int_{\mathbb{R}^3}\frac{p+b}{e^{a|p|^2+c}+1}dp
= b N(x,t).
\end{align*}
which gives the representation of $b$:
\begin{align}\label{b}
b(x,t)=\frac{P(x,t)}{N(x,t)}.
\end{align}
Finally, we compute the last line of (\ref{cancelation}) as follows:
\begin{align*}
E(x,t)&=\int_{\mathbb{R}^3}\frac{|p|^2}{e^{a|p-b|^2+c}+1}dp	\cr
&= \int_{\mathbb{R}^3}\frac{|p+b|^2}{e^{a|p|^2+c}+1}dp	\cr
&= \int_{\mathbb{R}^3}\frac{|p|^2}{e^{a|p|^2+c}+1}dp+\int_{\mathbb{R}^3}\frac{2p\cdot b}{e^{a|p|^2+c}+1}dp+\int_{\mathbb{R}^3}\frac{b^2}{e^{a|p|^2+c}+1}dp	\cr
&= a^{-\frac{5}{2}}\int_{\mathbb{R}^3}\frac{|p|^2}{e^{|p|^2+c}+1}dp+Nb^2(x,t),
\end{align*}
which, combined with (\ref{b}), gives
\begin{align}\label{this}
E(x,t)-\frac{P(x,t)^2}{N(x,t)}&=a^{-\frac{5}{2}}\int_{\mathbb{R}^3}\frac{|p|^2}{e^{|p|^2+c}+1}dp.
\end{align}
From  (\ref{N}) and (\ref{this}), we deduce that
\begin{align*}
\frac{N(x,t)}{\left(E(x,t)-\frac{P(x,t)^2}{N(x,t)}\right)^{\frac{3}{5}}}= \frac{\int_{\mathbb{R}^3}\frac{1}{e^{|p|^2+c}+1}dp}{{\left(\int_{\mathbb{R}^3}\frac{|p|^2}{e^{|p|^2+c}+1}dp\right)^\frac{3}{5}}},
\end{align*}
or, in view of (\ref{beta})
\begin{align}\label{remain to check}
\beta(c)=B(N,P,E).
\end{align}
If we can determine $c$ from this identity, we can recover $a$ from (\ref{N}) by
\begin{align*}
a(x,t)=\left(\int_{\mathbb{R}^3}\frac{1}{e^{|p|^2+c(x,t)}+1}dp\right)^{\frac{2}{3}}N(x,t)^{-\frac{2}{3}}.
\end{align*}
Therefore, it remains to check that ($\ref{remain to check}$) uniquely determines $c$, which is accomplished in the following theorem.
\begin{theorem}\label{unique c}
Assume $0<B(N,P,E)<\beta(-\ln3)$. Then,
\[
\beta(c)=B(N,P,E)
\]
has a unique solution $c$ in $(-\ln3,\infty)$.
\end{theorem}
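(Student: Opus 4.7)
The plan is to apply the intermediate value theorem after establishing that $\beta$ is continuous, strictly decreasing on $(-\ln 3, \infty)$, and maps this interval onto $(0,\beta(-\ln 3))$. The hypothesis $0<B(N,P,E)<\beta(-\ln 3)$ then places the right-hand side in the image, and strict monotonicity yields uniqueness.

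Continuity (indeed smoothness) of $\beta$ on all of $\mathbb{R}$ follows from differentiation under the integral sign in (\ref{beta}): the integrands and their $c$-derivatives are dominated by integrable functions of the form $(1+|p|^2)e^{-|p|^2}$, uniformly on any compact $c$-window. Strict monotonicity $\beta'(c)<0$ for $c>-\ln 3$ is exactly Proposition \ref{betathm}, which I would invoke here and defer its proof.

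It remains to identify the image. The value $\beta(-\ln 3)$ is a finite positive number read off the formula. For the behaviour at infinity, the classical (low-density) limit applies: multiplying through by $e^c$,
\[
e^c\!\int_{\mathbb R^3}\frac{dp}{e^{|p|^2+c}+1}=\int_{\mathbb R^3}\frac{dp}{e^{|p|^2}+e^{-c}}\ \longrightarrow\ \pi^{3/2},
\]
and similarly $e^c$ times the $|p|^2$-weighted integral tends to $\tfrac{3}{2}\pi^{3/2}$ (dominated convergence with dominator $(1+|p|^2)e^{-|p|^2}$). Hence $\beta(c)\sim (2/3)^{3/5}\pi^{3/5}\,e^{-2c/5}\to 0$ as $c\to\infty$. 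Continuity plus strict monotonicity then force $\beta((-\ln 3,\infty))=(0,\beta(-\ln 3))$, and Theorem \ref{unique c} follows.

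The genuine obstacle sits inside Proposition \ref{betathm}. An integration by parts yields the clean identity
\[
\partial_c\!\!\int_{\mathbb R^3}\!\frac{|p|^2\,dp}{e^{|p|^2+c}+1}=-\tfrac{3}{2}\!\int_{\mathbb R^3}\!\frac{dp}{e^{|p|^2+c}+1},
\]
and with it the sign condition $\beta'(c)<0$ reduces to the strict reverse-Cauchy-Schwarz-type inequality
\[
\Bigl(\!\int f\Bigr)^{\!2}<\tfrac{10}{9}\Bigl(\!\int |p|^2 f\Bigr)\Bigl(\!\int f(1-f)\Bigr),\qquad f=\tfrac{1}{e^{|p|^2+c}+1}.
\]
This is where the particular value $c=-\ln 3$ is pinned down as the sharp threshold for the inequality, and I expect verifying this to be the principal technical step, well beyond standard Cauchy--Schwarz.
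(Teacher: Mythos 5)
Your argument is correct and is essentially the paper's own proof: the paper also deduces Theorem \ref{unique c} directly from the continuity of $\beta$, the strict monotonicity on $(-\ln 3,\infty)$ supplied by Proposition \ref{betathm}, and the limit $\lim_{c\to\infty}\beta(c)=0$, which forces the range to be $(0,\beta(-\ln3))$ and gives existence and uniqueness at once. Your added details (dominated convergence for smoothness, the asymptotics $\beta(c)\sim(2/3)^{3/5}\pi^{3/5}e^{-2c/5}$, and the reduction of $\beta'(c)<0$ to $\left(\int f\right)^2<\tfrac{10}{9}\left(\int|p|^2f\right)\left(\int f(1-f)\right)$) are accurate and consistent with the computations the paper carries out inside Proposition \ref{betathm}.
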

\begin{proof}
This follows directly from the fact that
\[
\lim_{c\rightarrow\infty}\beta(c)=0
\]
and Proposition \ref{betathm} below.
\end{proof}
In view of this theorem, we allow a slight abuse the notation to use $\beta^{-1}$ in the following sense:
\begin{align}\label{beta inverse}
\beta^{-1}=\big(\beta\big|_{(-\ln3,\infty)}\big)^{-1}.
\end{align}
\begin{proposition}\label{betathm}
The function $\beta(c)$ defined in (\ref{beta}) is a strictly decreasing function of $c$ when $c\geq -\ln3$.
\end{proposition}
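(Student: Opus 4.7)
The plan is to reduce the monotonicity of $\beta$ to a Cauchy--Schwarz inequality via a layer-cake representation of the Fermi--Dirac profile. Introduce the one-dimensional moments
\[
\mu_\alpha(c):=\int_0^\infty s^\alpha\frac{1}{e^{s+c}+1}\,ds.
\]
Passing to spherical coordinates in $\mathbb{R}^3$ and the change of variables $s=|p|^2$ shows that $\beta(c)=(2\pi)^{2/5}\mu_{1/2}(c)/\mu_{3/2}(c)^{3/5}$. Exploiting the identity $\partial_c f = \partial_s f$ for $f(s,c)=1/(e^{s+c}+1)$, integration by parts (the boundary terms $s^{k+1/2}f$ vanish at $s=0$ and $s=\infty$ for $k=0,1$) yields
\[
\mu_{1/2}'(c)=-\tfrac{1}{2}\mu_{-1/2}(c),\qquad \mu_{3/2}'(c)=-\tfrac{3}{2}\mu_{1/2}(c).
\]
Differentiating $\log\beta$ and rearranging, $\beta'(c)<0$ becomes equivalent to the moment inequality
\[
5\,\mu_{-1/2}(c)\,\mu_{3/2}(c)>9\,\mu_{1/2}(c)^2.
\]

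For this I would invoke the layer-cake representation of $f$. Since $f(\cdot,c)$ is strictly decreasing from $f(0,c)=1/(e^c+1)\in(0,1)$ down to $0$, and its $s$-inverse is $R(u):=\ln\bigl((1-u)/u\bigr)-c$, one has $f(s,c)=\int_0^{f(0,c)}\mathbf{1}_{\{s<R(u)\}}\,du$. Fubini then gives
\[
\mu_\alpha(c)=\frac{1}{\alpha+1}\int_0^{f(0,c)}R(u)^{\alpha+1}\,du,\qquad \alpha>-1.
\]
Substituting $\alpha=-\tfrac{1}{2},\tfrac{1}{2},\tfrac{3}{2}$ into the moment inequality, the numerical prefactors on both sides collapse to a common factor of $4$, and the required inequality reduces to
\[
\left(\int_0^{f(0,c)}R(u)^{3/2}\,du\right)^2<\int_0^{f(0,c)}R(u)^{1/2}\,du\cdot\int_0^{f(0,c)}R(u)^{5/2}\,du,
\]
which is exactly Cauchy--Schwarz in $L^2\bigl(du,(0,f(0,c))\bigr)$ applied to the factorization $R^{3/2}=R^{1/4}\cdot R^{5/4}$. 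The strictness follows since $R(u)=\ln((1-u)/u)-c$ is strictly monotonic in $u$, so $R^{1/4}$ and $R^{5/4}$ are not proportional on $(0,f(0,c))$.

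The main difficulty I anticipate is the bookkeeping connecting $\beta'(c)$ to the clean moment inequality and verifying that every application of Fubini and differentiation under the integral is justified (the integrability of $\mu_{-1/2}$ near $s=0$, and of $R(u)^\alpha$ near $u=0$ where $R$ blows up only logarithmically, both hold). It is worth remarking that this argument actually delivers $\beta'(c)<0$ for every $c\in\mathbb{R}$; the restriction $c\geq-\ln3$ in the statement is therefore not needed for monotonicity itself but is the companion assumption that ensures $B(N,P,E)<\beta(-\ln3)$ is solved by some $c$ inside $(-\ln3,\infty)$ as used in Theorem \ref{unique c}.
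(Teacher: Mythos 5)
Your proof is correct, and it takes a genuinely different route from the one in the paper. After reducing to radial integrals, the paper computes $\beta'(c)$ explicitly, rewrites the relevant quantity as a symmetrized double integral $D(c)$ over $(r,\theta)$, splits the angular integral at $\theta=\tfrac14\cos^{-1}(-\tfrac37)$ where $3+7\cos4\theta$ changes sign, bounds the negative and positive pieces separately by elementary pointwise inequalities on $e^{r^2\cos^2\theta+c}+e^{r^2\sin^2\theta+c}$, and closes the argument with a final sign condition ($-3e^{r^2+2c}+e^{r^2+c}+e^c-3\le 0$) that is precisely where the hypothesis $c\ge-\ln3$ enters; this is why the paper's remark leaves the case $c<-\ln3$ inconclusive. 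You instead reduce everything to the one-dimensional moments $\mu_\alpha$, use $\partial_c f=\partial_s f$ plus integration by parts to get $\mu_{1/2}'=-\tfrac12\mu_{-1/2}$, $\mu_{3/2}'=-\tfrac32\mu_{1/2}$ (these are the same identities the paper obtains by integration by parts in $r$), and observe that $\beta'<0$ is equivalent to $9\mu_{1/2}^2<5\mu_{-1/2}\mu_{3/2}$ --- which is exactly the paper's condition $D(c)<0$, since $D(c)=\tfrac1{20}\bigl(9\mu_{1/2}^2-5\mu_{-1/2}\mu_{3/2}\bigr)$. The decisive step, the layer-cake/inverse-function representation $\mu_\alpha=\tfrac1{\alpha+1}\int_0^{f(0,c)}R(u)^{\alpha+1}du$, renormalizes the constants so that the needed inequality becomes strict Cauchy--Schwarz for $R^{1/4}\cdot R^{5/4}$ (note a direct Cauchy--Schwarz in $s$ only gives $\mu_{1/2}^2\le\mu_{-1/2}\mu_{3/2}$, which is too weak, so this renormalization is essential and you handle it correctly); strictness follows since $R$ is non-constant, and the integrability and Tonelli justifications you flag are all routine and valid. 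What your approach buys: it is shorter, avoids the angular splitting and the numerical constants $\alpha,\beta$, and yields the stronger statement $\beta'(c)<0$ for every $c\in\mathbb{R}$, thereby settling the regime the paper's remark leaves open (and, as a byproduct, the positivity in Lemma \ref{ek-na} would hold without reference to the threshold). What the paper's argument buys is only that it is entirely explicit and computational; it is genuinely tied to the threshold $-\ln3$, which your argument shows is not intrinsic to the monotonicity but only to the uniqueness discussion in Theorem \ref{unique c}.
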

\begin{remark}
The monotonicity of $\beta$ in the case $c<-\ln3$ is inconclusive for now.
\end{remark}
\begin{proof}
We will show that the $\beta^{\prime}(c)$ is strictly negative in $c\geq-\ln3$. The infinite differentiability of $\beta$ with respect to $c$ is
clear from the definition of $\beta$.
By an explicit computation we see that
\begin{align}\label{beta prime}
\begin{split}
\beta'(c) =\frac{\left(\int_{\mathbb{R}^3}\frac{|p|^2}{e^{|p|^2+c}+1}dp\right)\left(\int_{\mathbb{R}^3}\frac{-e^{|p|^2+c}}{(e^{|p|^2+c}+1)^2}dp\right)	-\frac{3}{5}\left(\int_{\mathbb{R}^3}\frac{-|p|^2e^{|p|^2+c}}{(e^{|p|^2+c}+1)^2}dp\right)
\left(\int_{\mathbb{R}^3}\frac{1}{e^{|p|^2+c}+1}dp\right)}{\left(\int_{\mathbb{R}^3}\frac{|p|^2}{e^{|p|^2+c}+1}dp\right)^{\frac{8}{5}}}.
\end{split}
\end{align}
We represent in the spherical coordinates:
\begin{align*}
\beta'(c)=\frac{(4\pi)^{2/5}D(c)}{2\Big(\int_0^{\infty}\frac{r^4}{e^{r^2+c}+1}dr\Big)^\frac{8}{5}},
\end{align*}
where
\begin{align*}
D(c)=\frac{6}{5}\int_0^{\infty}\frac{r^4e^{r^2+c}}{(e^{r^2+c}+1)^2}dr\int_0^{\infty}\frac{r^2}{e^{r^2+c}+1}dr
-2\int_0^{\infty}\frac{r^4}{e^{r^2+c}+1}dr\int_0^{\infty}\frac{r^2e^{r^2+c}}{(e^{r^2+c}+1)^2}dr.
\end{align*}
Therefore, the desired result is achieved if we show that $D(c)<0$.
We then apply the integration by parts: $u'=\frac{2re^{r^2+c}}{(e^{r^2+c}+1)^2}$, $v=\frac{1}{2}r^3$ for
\begin{align*}
\int_0^{\infty}\frac{r^4e^{r^2+c}}{(e^{r^2+c}+1)^2}dr=\frac{3}{2}\int_0^{\infty}\frac{r^2}{e^{r^2+c}+1}dr,
\end{align*}
and $u'=\frac{2re^{r^2+c}}{(e^{r^2+c}+1)^2}$, $v=\frac{1}{2}r$ for
\begin{align*}
\int_0^{\infty}\frac{r^2e^{r^2+c}}{(e^{r^2+c}+1)^2}dr=\frac{1}{2}\int_0^{\infty}\frac{1}{e^{r^2+c}+1}dr,
\end{align*}
to rewrite $D(c)$ as
\[
D(c)=\frac{9}{5}\left(\int_0^{\infty}\frac{r^2}{e^{r^2+c}+1}dr\right)^2-\int_0^{\infty}\frac{r^4}{e^{r^2+c}+1}dr\int_0^{\infty}\frac{1}{e^{r^2+c}+1}dr.
\]
We then symmetrize $D(c)$:
\begin{align*}
D(c)&=\frac{9}{5}\int_0^{\infty}\frac{x^2}{e^{x^2+c}+1}dx\int_0^{\infty}\frac{y^2}{e^{y^2+c}+1}dy
-\int_0^{\infty}\frac{x^4}{e^{x^2+c}+1}dx\int_0^{\infty}\frac{1}{e^{y^2+c}+1}dy\cr
&=\int_0^{\infty}\int_0^{\infty}\frac{\frac{9}{5}x^2y^2-x^4}{(e^{x^2+c}+1)(e^{y^2+c}+1)}dxdy,
\end{align*}
and write in the spherical coordinate:
\begin{align*}
D(c)&=\int_0^{\frac{\pi}{2}}\int_0^{\infty}r^5\frac{\frac{9}{5}\cos^2\theta\sin^2\theta-\cos^4\theta}{(e^{r^2\cos^2\theta+c}+1)(e^{r^2\sin^2\theta+c}+1)}drd\theta.
\end{align*}
Applying the change of variable $\frac{\pi}{2}-\theta = t$, we get
\begin{align*}
D(c)= \int_0^{\frac{\pi}{2}}\int_0^{\infty}r^5\frac{\frac{9}{5}\sin^2t\cos^2t-\sin^4t}{(e^{r^2\sin^2t+c}+1)(e^{r^2\cos^2t+c}+1)}drdt.
\end{align*}
From these two expression, we obtain the following symmetric expression of $D$:
\begin{align}\label{betamono} D(c)=\frac{1}{2}\int_0^{\frac{\pi}{2}}\int_0^{\infty}r^5\frac{\frac{18}{5}\cos^2\theta\sin^2\theta-\cos^4\theta-\sin^4\theta}{(e^{r^2\cos^2\theta+c}+1)(e^{r^2\sin^2\theta+c}+1)}drd\theta.
\end{align}
We observe that
\begin{align*}
\frac{18}{5}\sin^2\theta \cos^2\theta-\sin^4\theta-\cos^4\theta
	&=-\frac{1}{10}(3+7\cos4\theta),
\end{align*}
to simplify this further into
\begin{align*}
D(c)=\frac{1}{2}\int_0^{\frac{\pi}{2}}\int_0^{\infty}r^5\frac{-\frac{1}{10}(3+7\cos4\theta)}{(e^{r^2\cos^2\theta+c}+1)(e^{r^2\sin^2\theta+c}+1)}drd\theta.
\end{align*}
Next, from the observation that
\begin{align*} &\int_0^{\frac{\pi}{4}}\int_0^{\infty}r^5\frac{-\frac{1}{10}(3+7\cos4\theta)}{(e^{r^2\cos^2\theta+c}+1)(e^{r^2\sin^2\theta+c}+1)}drd\theta\cr
&\qquad=\int_{\frac{\pi}{4}}^{\frac{\pi}{2}}\int_0^{\infty}r^5\frac{-\frac{1}{10}(3+7\cos4\theta)}{(e^{r^2\cos^2\theta+c}+1)(e^{r^2\sin^2\theta+c}+1)}drd\theta,
\end{align*}
which can be checked by considering the change of variable $\theta\rightarrow \pi/2-\theta$, we restrict the domain of integral of $D(c)$ into $[0,\pi/4]$:
\begin{align}\label{betamono}
D(c)=\int_0^{\frac{\pi}{4}}\int_0^{\infty}r^5\frac{-\frac{1}{10}(3+7\cos4\theta)}{(e^{r^2\cos^2\theta+c}+1)(e^{r^2\sin^2\theta+c}+1)}drd\theta.
\end{align}

In view of the fact that $-1/10(3+7\cos4\theta)$ changes sign from negative to positive at $\theta$$=\frac{1}{4}\cos^{-1}(-\frac{3}{7})$ in interval $[0,\frac{\pi}{4}]$, we divide the integral as into the negative part and the positive part:
\begin{align*}
I&=\int_0^{\frac{1}{4}\cos^{-1}(-\frac{3}{7})}\int_0^{\infty}r^5\frac{-\frac{1}{10}(3+7\cos4\theta)}{(e^{r^2\cos^2\theta+c}+1)(e^{r^2\sin^2\theta+c}+1)}drd\theta,	\cr
II&=\int_{\frac{1}{4}\cos^{-1}(-\frac{3}{7})}^{\frac{\pi}{4}}\int_0^{\infty}r^5\frac{-\frac{1}{10}(3+7\cos4\theta)}{(e^{r^2\cos^2\theta+c}+1)(e^{r^2\sin^2\theta+c}+1)}drd\theta.
\end{align*}
First, we observe that
\[
e^{r^2\cos^2\theta+c}+e^{r^2\sin^2\theta+c}\leq e^{r^2+c}+e^{c} \mbox{ on }~0\leq \theta\leq \frac{1}{4}\cos^{-1}\bigg(-\frac{3}{7}\bigg),
\]
to estimate the negative part $I$:
\begin{align*}
I&=\int_0^{\infty}\int_0^{\frac{1}{4}\cos^{-1}(-\frac{3}{7})}\frac{r^5(-\frac{1}{10}(3+7\cos4\theta))}{(e^{r^2+2c}+e^{r^2\cos^2\theta+c}+e^{r^2\sin^2\theta+c}+1)}d\theta dr \cr
&\leq \int_0^{\infty}\int_0^{\frac{1}{4}\cos^{-1}(-\frac{3}{7})}\frac{r^5(-\frac{1}{10}(3+7\cos4\theta))}{e^{r^2+2c}+e^{r^2+c}+e^{c}+1}d\theta dr \cr
& \leq \left(\int_0^{\infty}\frac{r^5}{e^{r^2+2c}+e^{r^2+c}+e^{c}+1}dr\right)\left(\int_0^{\frac{1}{4}\cos^{-1}(-\frac{3}{7})}-\frac{1}{10}(3+7\cos4\theta)d\theta\right).
\end{align*}
Similarly, we use
\[
e^{r^2\cos^2\theta+c}+e^{r^2\sin^2\theta+c} \geq 2\sqrt{e^{r^2\cos^2\theta+c}e^{r^2\sin^2\theta+c}}= 2e^{\frac{r^2}{2}+c},
\]
to estimate $II$ as
\begin{align*}
II&=\int_0^{\infty}\int_{\frac{1}{4}\cos^{-1}(-\frac{3}{7})}^{\frac{\pi}{4}}\frac{r^5(-\frac{1}{10}(3+7\cos4\theta))}{(e^{r^2+2c}+e^{r^2\cos^2\theta+c}+e^{r^2\sin^2\theta+c}+1)}d\theta dr	\cr
&\leq \int_0^{\infty}\int_{\frac{1}{4}\cos^{-1}(-\frac{3}{7})}^{\frac{\pi}{4}}\frac{r^5(-\frac{1}{10}(3+7\cos4\theta))}{e^{r^2+2c}+2e^{\frac{r^2}{2}+c}+1}d\theta dr	\cr	
&\leq \left(\int_0^{\infty}\frac{r^5}{e^{r^2+2c}+2e^{\frac{r^2}{2}+c}+1}dr\right)\left(\int_{\frac{1}{4}\cos^{-1}(-\frac{3}{7})}^{\frac{\pi}{4}} -\frac{1}{10}(3+7\cos4\theta)d\theta \right).
\end{align*}
Now, for simplicity, we set
\begin{align*}
\alpha&=\int_0^{\frac{1}{4}\cos^{-1}(-\frac{3}{7})}-\frac{1}{10}(3+7\cos4\theta) d\theta
=-\frac{3}{40}\cos^{-1}\left(-\frac{3}{7}\right)	  -\frac{1}{\sqrt{40}}<0,\cr	\beta&=\int_{\frac{1}{4}\cos^{-1}(-\frac{3}{7})}^{\frac{\pi}{4}}-\frac{1}{10}(3+7\cos4\theta) d\theta
=\frac{3\pi}{40}+\frac{3}{40}\cos^{-1}\left(-\frac{3}{7}\right)	  +\frac{1}{\sqrt{40}}
>0.
\end{align*}
We combine the above two estimates for $I$ and $II$ and observe $-4/\alpha>\beta>0$ to get
\begin{align*}
D(c)&=I+II\cr
&< \left(\int_0^{\infty}\frac{r^5}{e^{r^2+2c}+e^{r^2+c}+e^c+1}dr\right)\alpha+
\left(\int_0^{\infty}\frac{r^5}{e^{r^2+2c}+2e^{\frac{r^2}{2}+c}+1}dr\right)\beta\cr
&\leq\left(\int_0^{\infty}\frac{-4r^5}{e^{r^2+2c}+e^{r^2+c}+e^c+1}dr+
\int_0^{\infty}\frac{r^5}{e^{r^2+2c}+2e^{\frac{r^2}{2}+c}+1}dr\right)\left(-\frac{\alpha}{4}\right)\cr
&=  \left(\int_0^{\infty}r^5\frac{-4e^{r^2+2c}-8e^{\frac{r^2}{2}+c}-4+e^{r^2+2c}+e^{r^2+c}+e^c+1}{(e^{r^2+2c}+e^{r^2+c}+e^c+1)(e^{r^2+2c}+2e^{\frac{r^2}{2}+c}+1)}dr\right)
\left(-\frac{\alpha}{4}\right)	\cr
&\leq  \left(\int_0^{\infty}r^5\frac{-3e^{r^2+2c}+e^{r^2+c}+e^c-3}{(e^{r^2+2c}+e^{r^2+c}+e^c+1)(e^{r^2+2c}+2e^{\frac{r^2}{2}+c}+1)}dr\right)
\left(-\frac{\alpha}{4}\right).	
\end{align*}
In second line, strict inequality arise because $I$ and $II$ can not satisfy equality at the same time.
Recalling (\ref{beta prime}), what we have derived so far amounts to
\begin{align*}
\beta'(c)& < \frac{(4\pi)^{2/5}}{2}\Big(\int_0^{\infty}\frac{r^4}{e^{r^2+c}+1}dr\Big)^{-\frac{8}{5}}\left(-\frac{\alpha}{4}\right)\cr
&\quad\times\left(\int_0^{\infty}r^5\frac{-3e^{r^2+2c}+e^{r^2+c}+e^c-3}{(e^{r^2+2c}+e^{r^2+c}+e^c+1)(e^{r^2+2c}+2e^{\frac{r^2}{2}+c}+1)}dr\right).
\end{align*}
Therefore, we get the desired result from the following claim:\newline

{\bf Claim:} If $c\geq-\ln 3$, then $-3e^{r^2+2c}+e^{r^2+c}+e^c-3\leq 0$ for all $r\geq 0$.\newline
To prove this claim, we set
\begin{align*}
Y&=-3e^{r^2+2c}+e^{r^2+c}+e^c-3.
\end{align*}
Define
\[
X=e^{r^2},
\]
to rewrite $Y$ as
\[
Y=\big(-3e^{2c}+e^c\big)X+e^c-3 \quad (X\geq 1).
\]
For this straight line to stay strictly negative for all $X\geq 1$, we impose the following condition
\[
-3e^{2c}+e^c\leq 0,~\mbox{ and }~Y(1)=-3e^{2c}+2e^c-3<0.
\]
Since the second inequality is automatically satisfied, we only need to consider the first one,
which is equivalent to $c\geq-\ln 3$. This completes the proof of the claim.
\end{proof}
%
%
%
%
%
%

The following corollary will recur throughout the paper.
\begin{corollary}\label{beta lemma}
Let $c_0>-\ln3$. Then, there exists $\varepsilon>0$ and corresponding $C_{\varepsilon,n}, C_{\varepsilon,\ell}>0$ such that
for $|c-c_0|\leq\varepsilon$, $\beta$ satisfies
\begin{align*}
&(1)~ |\beta^{(n)}(c)|<C_{\varepsilon, n},	\cr
&(2)~ |\beta^{\prime}(c)|\geq C_{\varepsilon,\ell}.
\end{align*}
\end{corollary}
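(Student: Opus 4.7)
The plan is to derive both bounds as an immediate consequence of continuity plus compactness, together with the strict monotonicity of $\beta$ established in Proposition \ref{betathm}. Since $c_0 > -\ln 3$, I would first choose $\varepsilon > 0$ so small that the closed interval $[c_0 - \varepsilon, c_0 + \varepsilon]$ is a compact subset of the open set $(-\ln 3, \infty)$ on which all the preceding analysis is valid. Every subsequent estimate is then carried out uniformly on this compact interval.

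For part (1), I would first verify that $\beta$ is smooth on $(-\ln 3, \infty)$ by repeated differentiation under the integral sign. Each derivative of the integrands $1/(e^{|p|^2+c}+1)$ and $|p|^2/(e^{|p|^2+c}+1)$ with respect to $c$ is a finite sum of terms of the form $e^{k(|p|^2+c)}/(e^{|p|^2+c}+1)^{k+1}$, multiplied by a polynomial in $|p|^2$. For $c$ in our compact interval each such term is dominated, uniformly in $c$, by an integrable majorant of the form $C_n(1+|p|^2)^n e^{-|p|^2}$. Dominated convergence then justifies differentiating under the integral arbitrarily many times and shows that $\beta^{(n)}$ is continuous on $[c_0 - \varepsilon, c_0 + \varepsilon]$. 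Being a continuous function on a compact interval, $\beta^{(n)}$ is bounded, giving the constant $C_{\varepsilon, n}$. The denominator $\int_{\mathbb{R}^3} |p|^2/(e^{|p|^2+c}+1)\,dp$, and each positive power of it appearing after applying the quotient rule, is bounded away from zero on the interval by continuity and positivity, so the quotient causes no trouble.

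For part (2), by Proposition \ref{betathm} the continuous function $\beta'$ is strictly negative at every point of the compact set $[c_0 - \varepsilon, c_0 + \varepsilon]$. Hence $|\beta'|$ is a strictly positive continuous function on a compact interval, and so attains a strictly positive minimum, which we take as $C_{\varepsilon, \ell}$. This delivers the desired uniform lower bound.

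The only point that requires genuine verification is the exchange of differentiation and integration used to produce the bound in (1), but because $c$ ranges over a compact interval bounded strictly away from $-\ln 3$ and from $-\infty$, the relevant integrands and all their $c$-derivatives enjoy Gaussian-type decay in $|p|$ that is uniform in $c$, making the application of the dominated convergence theorem routine. There is no delicate cancellation involved in either part.
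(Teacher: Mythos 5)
Your argument is correct and follows essentially the same route as the paper: both rest on continuity of $\beta^{(n)}$ on the compact interval $[c_0-\varepsilon,c_0+\varepsilon]\subset(-\ln 3,\infty)$ for (1), and on the strict negativity of $\beta'$ from Proposition \ref{betathm} combined with compactness for (2). The only difference is that you spell out the differentiation under the integral sign with an explicit integrable majorant, which the paper takes for granted when asserting the smoothness of $\beta$.
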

\begin{remark} 
This estimates on derivatives of $\beta$ show up too often throughout the paper, so we will not refer to this lemma except when it is necessary
to explicitly mention it.
\end{remark}
\begin{proof}
\noindent(1) By definition, $\beta(c)$ is infinitely differentiable with respect to $c$. Therefore, any derivatives of $\beta$ is continuous, and attain its maximum and minimum in the closed interval $|c-c_0|\leq\varepsilon$.\newline
\noindent (2) Take $\varepsilon$ sufficiently small so that any $c$ satisfying $|c-c_0|\leq\varepsilon$ still satisfies $c>-\ln 3$. Then, by Proposition \ref{betathm}, $\beta^{\prime}(c)$ is strictly negative. Therefore, the $|\beta^{\prime}(c)|$ is a continuous function that never vanishes, and we can find $C_{\varepsilon,\ell}>0$ such that
$|\beta^{\prime}(c)|\geq C_{\varepsilon,\ell}$ on the closed interval $|c-c_0|\leq\varepsilon$.
\end{proof}
\section{Linearization of Fermi-Dirac model}
In this section, we consider the linearization of the Fermi-Dirac distribution near a global Fermi-Dirac distribution:
\begin{align}\label{globalmax}
	m(p)=\frac{1}{e^{a_0|p|^2+c_0}+1},	
\end{align}
where $a_0$ and $c_0$ are determined by (\ref{by}) and $N_0$, $P_0$, $E_0$ satisfy ($P_0=0$)
\begin{align*}
\beta(-\ln3)>\frac{N_0}{E_0^{3/5}}.
\end{align*}
\subsection{Transitional Fermi-Dirac distribution} To study the linearization of the relaxation operator, we define the transitional local Fermi-Dirac distribution:
\begin{align}\label{Localtheta}
\mathcal{F}(\theta)=\frac{1}{e^{a_{\theta}\big|p-\frac{P_{\theta}}{N_{\theta}}\big|^2+c_{\theta}}+1},	
\end{align}
where $N_{\theta},P_{\theta},E_{\theta}$ denotes the transition of macroscopic fields from $(N,P,E)$ to $(N_0,P_0,E_0)$ ($0\leq \theta \leq 1$):
\begin{align*}
N_{\theta}=\theta N+(1-\theta)N_0,	\quad P_{\theta}&=\theta P,	\quad E_{\theta}=\theta E+(1-\theta)E_0,
\end{align*}
and $a_{\theta}$ and $c_{\theta}$ are defined by the following relations:
\begin{align}\label{recall ac}
\frac{N_{\theta}}{\left(E_{\theta}-\frac{P_{\theta}^2}{N_{\theta}}\right)^{\frac{3}{5}}} = \frac{\int_{\mathbb{R}^3}\frac{1}{e^{|p|^2+c_{\theta}}+1}dp}{\left(\int_{\mathbb{R}^3}\frac{|p|^2}{e^{|p|^2+c_{\theta}}+1}dp\right)^\frac{3}{5}},	\quad
a_{\theta}=\left(\int_{\mathbb{R}^3}\frac{1}{e^{|p|^2+c_{\theta}}+1}dp\right)^\frac{2}{3} N_{\theta}^{-\frac{2}{3}}.
\end{align}
Note that $\mathcal{F}(\theta)$ represents the transition from the global Fermi-Dirac $m(p)$ to the local Fermi-Dirac $\mathcal{F}(F)$:
\begin{align*}
\mathcal{F}(1)=\frac{1}{e^{a|p-\frac{P}{N}|^2+c}+1}, \quad \text{and} \quad  \mathcal{F}(0)=\frac{1}{e^{a_0|p|^2+c_0}+1}.
\end{align*}
\begin{definition} \label{Pfdef}
We define the macroscopic projection by
\begin{align*}
Pf\equiv \sum_{i=1}^5\langle f,e_i\rangle_{L^2_p} e_i,
\end{align*}
where $\{e_i\}_{1\leq i\leq5}$ is an orthonormal basis for the five dimensional linear space defined by
\begin{align}\label{orthobasis}
\begin{split}
e_1&= \frac{\sqrt{m-m^2}}{\sqrt{\displaystyle\int_{\mathbb{R}^3}m-m^2dp}},	\cr
e_i&= \frac{p_i\sqrt{m-m^2}}{\sqrt{\displaystyle\int_{\mathbb{R}^3}p_i^2(m-m^2)dp}} \quad i=2,3,4,	\cr
e_5&=\frac{|p|^2\sqrt{m-m^2}-\frac{\displaystyle\int_{\mathbb{R}^3}|p|^2(m-m^2)dp}{\displaystyle\int_{\mathbb{R}^3}m-m^2dp}\sqrt{m-m^2}}
{\sqrt{\displaystyle\int_{\mathbb{R}^3}\left(|p|^2\sqrt{m-m^2}-\frac{\displaystyle\int_{\mathbb{R}^3}|p|^2(m-m^2)dp}{\displaystyle\int_{\mathbb{R}^3}m-m^2dp}\sqrt{m-m^2}\right)^2dp}}.
\end{split}
\end{align}
\end{definition}
We now state the main goal of this section:
\begin{theorem}\label{Linearize} Assume $ c_{\theta} > -\ln3$. Then the local Fermi-Dirac distribution $\mathcal{F}(F)$ is linearized around a global Fermi-Dirac distribution $m$ if we define $F=m+\sqrt{m-m^2}f$ :
\begin{align*}
\mathcal{F}(F)=m+Pf\sqrt{m-m^2}+\sum_{1\leq i,j\leq5}\bigg\{\int_0^1 \{D^2_{(N_{\theta},P_{\theta},E_{\theta})}\mathcal{F}(\theta)\}_{i,j}(1-\theta)d\theta \bigg\} \langle f,e_i\rangle_{L^2_p}\langle f,e_j\rangle_{L^2_p}.
\end{align*}
\end{theorem}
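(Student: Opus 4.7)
The plan is to Taylor-expand $\theta \mapsto \mathcal{F}(\theta)$ on $[0,1]$ with integral-form remainder:
$$\mathcal{F}(1) = \mathcal{F}(0) + \frac{d\mathcal{F}}{d\theta}\Big|_{\theta=0} + \int_0^1 \frac{d^2\mathcal{F}}{d\theta^2}(\theta)(1-\theta)\,d\theta.$$
By construction $\mathcal{F}(0) = m$ and $\mathcal{F}(1) = \mathcal{F}(F)$, which already gives the overall shape of the decomposition; what remains is to identify the first-order term with $Pf\sqrt{m-m^2}$ and rewrite the second-order remainder as the asserted bilinear form. The required $C^2$ regularity of $\theta \mapsto \mathcal{F}(\theta)$ comes from the implicit function theorem applied to (\ref{recall ac}): under $c_\theta > -\ln 3$, Proposition \ref{betathm} yields $\beta'(c_\theta) \neq 0$, so $(a_\theta, c_\theta)$ depends smoothly on $(N_\theta, P_\theta, E_\theta)$, which is itself affine in $\theta$.

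For the remainder, the chain rule gives
$$\frac{d^2\mathcal{F}(\theta)}{d\theta^2} = \sum_{i,j=1}^{5}\{D^2_{(N_\theta,P_\theta,E_\theta)}\mathcal{F}(\theta)\}_{i,j}\,\delta_i\delta_j,$$
where I view $\delta = (N-N_0,\, P,\, E-E_0)$ as a five-vector (since $P$ has three components and $P_0 = 0$). Because $m$ has moments $(N_0, 0, E_0)$, the decomposition $F = m + \sqrt{m-m^2}f$ identifies each $\delta_k$ with $\langle f, \sqrt{m-m^2}\psi_k\rangle_{L^2_p}$ for $\psi_k \in \{1, p_1, p_2, p_3, |p|^2\}$. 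Since $\{\sqrt{m-m^2}\psi_k\}$ and $\{e_i\}$ of Definition \ref{Pfdef} span the same five-dimensional subspace, rewriting each $\delta_k$ in the orthonormal basis $\{e_i\}$ and absorbing the invertible change-of-basis matrix into the Hessian coefficients produces exactly the double sum $\sum_{i,j}\{\cdots\}_{i,j}\langle f, e_i\rangle\langle f, e_j\rangle$ of the statement.

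The heart of the proof is the first-derivative identity $\tfrac{d\mathcal{F}(\theta)}{d\theta}\big|_{\theta=0} = Pf\sqrt{m-m^2}$. Rather than grind through implicit differentiation of (\ref{recall ac}) coefficient-by-coefficient, I would argue that both sides lie in the five-dimensional space $V := (m-m^2)\cdot\mathrm{span}\{1, p_1, p_2, p_3, |p|^2\}$ and share the same zeroth, first, and second $p$-moments, from which equality follows by non-degeneracy of the moment pairing on $V$. Membership in $V$ holds because $\partial_c m = -(m-m^2)$, $\partial_{b_i}m = 2a_0 p_i(m-m^2)$, and $\partial_a m = -|p|^2(m-m^2)$, so the left side lies in $V$ by the chain rule, while $Pf\sqrt{m-m^2}\in V$ is immediate from the formulas for $e_i$. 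Moment agreement then follows from: (i) differentiating the constitutive identities $\int \mathcal{F}(\theta)(1, p, |p|^2)\,dp = (N_\theta, P_\theta, E_\theta)$ at $\theta = 0$, which produces $\delta$; and (ii) using self-adjointness of $P$ together with $\sqrt{m-m^2}\psi_k \in \mathrm{span}\{e_i\}$ to get $\int Pf\sqrt{m-m^2}\,\psi_k\,dp = \langle f,\sqrt{m-m^2}\psi_k\rangle_{L^2_p} = \delta_k$. I expect the main obstacle to be the initial regularity step: justifying the Taylor expansion requires $C^2$ smoothness of $(a_\theta, c_\theta)$ in $\theta$, which is where the quantitative non-degeneracy from Proposition \ref{betathm} and the lower bounds of Corollary \ref{beta lemma} must be invoked carefully through the implicit function theorem.
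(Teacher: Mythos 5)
Your proposal is correct, and it shares the paper's skeleton (Taylor expansion of $\theta\mapsto\mathcal{F}(\theta)$ with integral remainder, $\mathcal{F}(0)=m$, chain rule plus the identification of $(N-N_0,P,E-E_0)$ with moments of $f$ against $\sqrt{m-m^2}\,\{1,p,|p|^2\}$, and a change of basis to $\{e_i\}$ absorbed into the Hessian entries for the quadratic remainder). Where you genuinely diverge is the first-order term: the paper proves $\mathcal{F}'(0)=Pf\sqrt{m-m^2}$ by brute force, computing $\partial c_\theta/\partial(N_\theta,P_\theta,E_\theta)$ and $\partial a_\theta/\partial(N_\theta,P_\theta,E_\theta)$ at $\theta=0$ (Lemmas \ref{beta0}--\ref{diffa}), then the partials of $\mathcal{F}(\theta)$ (Lemma \ref{derivatives F}), and finally matching the resulting expression with the projection via the explicit formulas for $e_1,\dots,e_5$ (Lemmas \ref{II lemma}, \ref{I+III lemma}). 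You instead observe that both $\mathcal{F}'(0)$ and $Pf\sqrt{m-m^2}$ lie in the five-dimensional space $V=(m-m^2)\,\mathrm{span}\{1,p_1,p_2,p_3,|p|^2\}$ and have the same $(1,p,|p|^2)$-moments --- the former by differentiating the constitutive identity $\int\mathcal{F}(\theta)(1,p,|p|^2)\,dp=(N_\theta,P_\theta,E_\theta)$ (which holds by the very construction of $(a_\theta,b_\theta,c_\theta)$ in Section 2, available since $c_\theta>-\ln 3$), the latter by self-adjointness of $P$ --- and conclude by non-degeneracy of the Gram matrix $\left(\int\psi_j\psi_k(m-m^2)\,dp\right)$, which is positive definite since $m-m^2>0$. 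This is a valid and slicker route that makes the structural reason for the identity transparent and bypasses the derivative computations; you also correctly flag that the $C^2$ regularity in $\theta$ needed for the Taylor step comes from $\beta'(c_\theta)\neq 0$ (Proposition \ref{betathm}, Corollary \ref{beta lemma}) via the implicit function theorem, a point the paper leaves implicit. The trade-off is that the paper's explicit formulas are not wasted effort: Lemmas \ref{diffc}, \ref{diffa}, \ref{derivatives F} and the representations (\ref{e1}), (\ref{e5}) are reused later for the second-order structure (Lemma \ref{diff2}), the collision-frequency linearization (Theorem \ref{LinearizeCol}), and the nonlinear estimates, so with your shortcut those computations would still have to be carried out elsewhere.
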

We postpone the proof until various preliminary computations are completed. We start with the computations of the derivatives of transitional macroscopic fields.

\subsection{Derivatives of transitional macroscopic fields}
First we need the following lemma, which is frequently used throughout this subsection:
\begin{lemma}\label{ek-na} Define the constant $k$ by
\begin{align}\label{kdef}
k \equiv \int_{\mathbb{R}^3}(m-m^2)dp.
\end{align}
 Assume\begin{align*}
\frac{N_0}{E_0^{3/5}}<\beta(-\ln3).
\end{align*}
Then we have
\begin{align*}
E_0k-\frac{9N_0^2}{10a_0}>0.
\end{align*}
\end{lemma}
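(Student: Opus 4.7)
The plan is to reduce the claim to the strict inequality $D(c_0)<0$ that is already established inside the proof of Proposition \ref{betathm}. The bridge between the two statements becomes visible after I rewrite $N_0$, $E_0$, and $k$ in a common scaling.

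First I would apply the change of variables $\sqrt{a_0}\,p \to p$ to each of the three quantities. Writing
\[
I_j(c) = \int_0^{\infty} \frac{r^j}{e^{r^2+c}+1}\,dr \qquad (j=0,2,4),
\]
passage to spherical coordinates yields $N_0 = 4\pi a_0^{-3/2} I_2(c_0)$ and $E_0 = 4\pi a_0^{-5/2} I_4(c_0)$. For $k$ I would use the identity $m(1-m) = e^{a_0|p|^2+c_0}/(e^{a_0|p|^2+c_0}+1)^2$ together with the same rescaling to get
\[
k = 4\pi a_0^{-3/2} \int_0^{\infty} \frac{r^2\,e^{r^2+c_0}}{(e^{r^2+c_0}+1)^2}\,dr.
\]
The integration by parts already carried out in the proof of Proposition \ref{betathm} (with $u'=2re^{r^2+c}/(e^{r^2+c}+1)^2$ and $v=r/2$) shows that this last integral equals $\tfrac{1}{2} I_0(c_0)$, so $k = 2\pi a_0^{-3/2} I_0(c_0)$.

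Substituting these three expressions and cancelling the common prefactor $8\pi^2 a_0^{-4}$, the claim $E_0 k - \tfrac{9 N_0^2}{10 a_0} > 0$ is equivalent to
\[
I_0(c_0)\,I_4(c_0) \;>\; \tfrac{9}{5}\,I_2(c_0)^2,
\]
which, after one further application of the identity $J_4=\tfrac{3}{2}I_2$ from the proof of Proposition \ref{betathm}, is exactly the assertion $D(c_0)<0$ in the notation of that proposition.

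It only remains to check that $c_0$ lies in the regime where $D$ has been shown to be negative. Since $P_0=0$, the hypothesis $N_0/E_0^{3/5} < \beta(-\ln 3)$ reads $B(N_0,P_0,E_0) < \beta(-\ln 3)$, so Theorem \ref{unique c} places the equilibrium parameter $c_0$ defined in \eqref{by} in $(-\ln 3,\infty)$. The proof of Proposition \ref{betathm} established $D(c)<0$ throughout this interval, which yields the lemma. The main (and essentially only) obstacle is spotting that after the rescaling and the IBP identity, the quantity $E_0 k - \tfrac{9 N_0^2}{10 a_0}$ is, up to a positive constant, just $-D(c_0)$; once that observation is made, the lemma is immediate from Proposition \ref{betathm} and costs no new analysis.
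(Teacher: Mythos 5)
Your proposal is correct and follows essentially the same route as the paper: both reduce the inequality, after the rescaling $\sqrt{a_0}\,p\to p$ and the integration-by-parts identities from Proposition \ref{betathm}, to the statement $E_0k-\tfrac{9N_0^2}{10a_0}=-\tfrac{(4\pi)^2}{2a_0^4}D(c_0)$ with $D(c_0)<0$ for $c_0>-\ln 3$, where $c_0>-\ln 3$ is supplied by Theorem \ref{unique c}. Your write-up is just a more explicit version of the paper's ``explicit computation,'' so no further comment is needed.
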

\begin{proof}
Note that the assumption guarantees that we can find $c_0>-\ln3$ by Theorem \ref{unique c}. An explicit computation gives
\begin{align*}
E_0k-\frac{9N_0^2}{10a_0}
&=\int_{\mathbb{R}^3}\frac{|p|^2}{e^{a_0|p|^2+c_0}+1}dp\int_{\mathbb{R}^3}\frac{e^{a_0|p|^2+c_0}}{(e^{a_0|p|^2+c_0}+1)^2}dp\cr
&-\frac{9}{10a_0}\left(\int_{\mathbb{R}^3}\frac{1}{e^{a_0|p|^2+c_0}+1}dp\right)^2.
\end{align*}
We then note from the proof of Proposition \ref{betathm} that the r.h.s is $-\frac{(4\pi)^2}{2a_0^4}B(c_0)$, the strict positiveness of which under the assumption $c_0>-\ln3$ is also shown in Proposition {\ref{betathm}}.
\end{proof}
\begin{lemma}\label{beta0} Assume $ c_{0}> -\ln 3$, then we have
\begin{align}\label{left}
\big(\beta^{-1}\big)^{\prime}\big(\beta(c_0)\big)&=\frac{E_0^\frac{8}{5}}{-E_0k+\frac{9N_0^2}{10a_0}},
\end{align}
\end{lemma}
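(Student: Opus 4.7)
The plan is to invoke the inverse function theorem and reduce the problem to computing $\beta'(c_0)$ explicitly. By the inverse function theorem (applicable because Proposition~\ref{betathm} gives $\beta'(c_0)\neq 0$ when $c_0>-\ln 3$),
\[
(\beta^{-1})'(\beta(c_0))=\frac{1}{\beta'(c_0)},
\]
so everything reduces to showing
\[
\beta'(c_0)=\frac{-E_0k+\frac{9N_0^2}{10a_0}}{E_0^{8/5}}.
\]

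To get there I will start from the explicit expression for $\beta'(c)$ derived in (\ref{beta prime}) during the proof of Proposition~\ref{betathm} and evaluate every integral at $c=c_0$. The key bridge between those integrals (written in the normalized variable, i.e.\ with weight $e^{|p|^2+c_0}+1$) and the physical quantities $N_0$, $E_0$, $k$ is the change of variables $\sqrt{a_0}\,p\mapsto p$, exactly as used in (\ref{N}) and (\ref{this}); this yields
\[
\int_{\mathbb{R}^3}\frac{dp}{e^{|p|^2+c_0}+1}=a_0^{3/2}N_0,\qquad \int_{\mathbb{R}^3}\frac{|p|^2\,dp}{e^{|p|^2+c_0}+1}=a_0^{5/2}E_0,\qquad \int_{\mathbb{R}^3}\frac{e^{|p|^2+c_0}\,dp}{(e^{|p|^2+c_0}+1)^2}=a_0^{3/2}k,
\]
using the definition (\ref{kdef}) of $k$ and the fact that $P_0=0$.

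The one remaining integral in (\ref{beta prime}) is $\int \frac{|p|^2e^{|p|^2+c_0}}{(e^{|p|^2+c_0}+1)^2}dp$. Here I will re-use the integration-by-parts identity already carried out in the proof of Proposition~\ref{betathm} (the $v=\tfrac12 r^3$ case), which after passing to spherical coordinates gives
\[
\int_{\mathbb{R}^3}\frac{|p|^2 e^{|p|^2+c_0}}{(e^{|p|^2+c_0}+1)^2}\,dp=\frac{3}{2}\int_{\mathbb{R}^3}\frac{1}{e^{|p|^2+c_0}+1}\,dp=\frac{3}{2}a_0^{3/2}N_0.
\]
Plugging these four evaluations into (\ref{beta prime}), the numerator becomes
\[
-(a_0^{5/2}E_0)(a_0^{3/2}k)+\frac{3}{5}\cdot\frac{3}{2}a_0^{3/2}N_0\cdot a_0^{3/2}N_0=a_0^{4}\Bigl(-E_0k+\frac{9N_0^2}{10a_0}\Bigr),
\]
while the denominator is $(a_0^{5/2}E_0)^{8/5}=a_0^{4}E_0^{8/5}$. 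The powers of $a_0$ cancel and the claimed formula for $\beta'(c_0)$ drops out, after which the inverse-function-theorem identity gives (\ref{left}).

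There is no real obstacle here beyond careful bookkeeping of the $a_0$ exponents. The only point that requires attention is confirming that the denominator $-E_0k+\tfrac{9N_0^2}{10a_0}$ is nonzero, so that dividing by $\beta'(c_0)$ is legitimate; but this is exactly the content of Lemma~\ref{ek-na}, which is already established under the running hypothesis $c_0>-\ln 3$.
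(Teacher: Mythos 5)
Your proof is correct and follows essentially the same route as the paper: the inverse-function identity $(\beta^{-1})'(\beta(c_0))=1/\beta'(c_0)$, the explicit formula (\ref{beta prime}), the integration-by-parts identity reducing $\int|p|^2e^{|p|^2+c_0}(e^{|p|^2+c_0}+1)^{-2}dp$ to $\tfrac32\int(e^{|p|^2+c_0}+1)^{-1}dp$, and the scaling $\sqrt{a_0}\,p\mapsto p$ to express everything through $N_0$, $E_0$, $k$. The only cosmetic difference is the order in which you apply the scaling and the integration by parts, plus your explicit appeal to Lemma \ref{ek-na} for the nonvanishing denominator, which the paper leaves implicit.
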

\begin{proof} From the differentiation rule for composite functions, we have
\begin{align*}
\big(\beta^{-1}\big)^{\prime}(\beta(c)) = \frac{1}{\beta'(c)}.
\end{align*}

We then use (\ref{beta prime}) to get
\begin{align*}
\big(\beta^{-1}\big)^{\prime}(\beta(c)) &= \frac{\left(\int_{\mathbb{R}^3}\frac{|p|^2}{e^{|p|^2+c_0}+1}dp\right)^\frac{8}{5}}{\int_{\mathbb{R}^3}\frac{|p|^2}{e^{|p|^2+c_0}+1}dp\int_{\mathbb{R}^3}\frac{-e^{|p|^2+c_0}}{(e^{|p|^2+c_0}+1)^2}dp
-\frac{3}{5}\int_{\mathbb{R}^3}\frac{-|p|^2e^{|p|^2+c_0}}{(e^{|p|^2+c_0}+1)^2}dp\int_{\mathbb{R}^3}\frac{1}{e^{|p|^2+c_0}+1}dp}.
	\end{align*}
Writing it in the spherical coordinates and applying integration by parts:
\[
 u'=\frac{2re^{r^2+c_0}}{(e^{r^2+c_0}+1)^2}, \quad v=\frac{1}{2}r^3,
\]
and rewriting back in the Cartesian coordinate, we derive
\begin{align*}
\big(\beta^{-1}\big)^{\prime}(\beta(c))&= \frac{\left(\int_{\mathbb{R}^3}\frac{|p|^2}{e^{|p|^2+c_0}+1}dp\right)^\frac{8}{5}}{-\int_{\mathbb{R}^3}\frac{|p|^2}{e^{|p|^2+c_0}+1}dp\int_{\mathbb{R}^3}\frac{e^{|p|^2+c_0}}{(e^{|p|^2+c_0}+1)^2}dp
+\frac{9}{10}\left(\int_{\mathbb{R}^3}\frac{1}{e^{|p|^2+c_0}+1}dp\right)^2}.
\end{align*}
We then recall the definition of $N_0$, $P_0$, $E_0$ in (\ref{NPE0}) and observe
\begin{align}\label{k}
k = \int_{\mathbb{R}^3}\frac{e^{a_0|p|^2+c_0}}{(e^{a_0|p|^2+c_0}+1)^2}dp  = a_0^{-\frac{3}{2}}\int_{\mathbb{R}^3}\frac{e^{|p|^2+c_0}}{(e^{|p|^2+c_0}+1)^2}dp,
\end{align}
to obtain
\begin{align*}
\big(\beta^{-1}\big)^{\prime}(\beta(c))
=\frac{(a_0^{\frac{5}{2}}E_0)^\frac{8}{5}}{-a_0^{\frac{5}{2}}E_0\left(a_0^{\frac{3}{2}}k\right)+\frac{9}{10}\left(a_0^{\frac{3}{2}}N_0\right)^2}
=\frac{E_0^\frac{8}{5}}{-E_0k+\frac{9N_0^2}{10a_0}}.
\end{align*}
\end{proof}
Now we can calculate $\nabla_{(N_{\theta},P_{\theta},E_{\theta})}c_{\theta}$.
\begin{lemma}\label{diffc} Assume $c_{\theta}> -\ln 3$. Then we have
\begin{align*}
&(1) \ \frac{\partial c_\theta}{\partial N_\theta}\bigg|_{\theta=0} =\frac{E_0}{-E_0k+\frac{9N_0^2}{10a_0}},	\cr
&(2) \ \frac{\partial c_\theta}{\partial P_\theta}\bigg|_{\theta=0} =0,	\cr
&(3) \ \frac{\partial c_\theta}{\partial E_\theta}\bigg|_{\theta=0} =-\frac{3}{5}\frac{N_0}{-E_0k+\frac{9N_0^2}{10a_0}},
\end{align*}
where $k$ is defined by (\ref{kdef}).
\end{lemma}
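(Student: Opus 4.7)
The plan is to exploit the defining relation $\beta(c_\theta) = B(N_\theta, P_\theta, E_\theta)$ from (\ref{recall ac}) to write $c_\theta = \beta^{-1}\bigl(B(N_\theta, P_\theta, E_\theta)\bigr)$, with $\beta^{-1}$ understood as in (\ref{beta inverse}). The hypothesis $c_\theta > -\ln 3$ places us in the regime where Proposition \ref{betathm} applies, so $\beta'(c_\theta) \neq 0$ and $\beta^{-1}$ is differentiable at $\beta(c_0)$. By the chain rule, for $X \in \{N, P, E\}$,
\[
\frac{\partial c_\theta}{\partial X_\theta}\bigg|_{\theta=0} = \big(\beta^{-1}\big)'\big(\beta(c_0)\big) \cdot \frac{\partial B}{\partial X}\bigg|_{(N_0, P_0, E_0)},
\]
and the scalar factor on the right has already been computed in Lemma \ref{beta0}.

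Next I would compute the three partial derivatives of $B(N,P,E) = N/(E - P^2/N)^{3/5}$ directly, and evaluate them at $(N_0, 0, E_0)$, using crucially that $P_0 = 0$. The derivative $\partial B/\partial P$ carries an overall factor of $P$ coming from the chain rule applied to $P^2/N$, so it vanishes at $P_0 = 0$; this immediately yields conclusion $(2)$. For the other two, a short computation gives
\[
\frac{\partial B}{\partial N}\bigg|_{(N_0,0,E_0)} = \frac{1}{E_0^{3/5}}, \qquad \frac{\partial B}{\partial E}\bigg|_{(N_0,0,E_0)} = -\frac{3 N_0}{5\, E_0^{8/5}}.
\]

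To finish, I would multiply by $(\beta^{-1})'(\beta(c_0)) = E_0^{8/5}\big/\bigl(-E_0 k + \tfrac{9 N_0^2}{10 a_0}\bigr)$ from Lemma \ref{beta0}: the $E_0^{8/5}$ in the numerator combines with $E_0^{-3/5}$ to give an $E_0$, producing $(1)$, while in $(3)$ the $E_0^{8/5}$ in numerator and denominator cancel outright, leaving the $-3N_0/5$ prefactor. The denominator $-E_0 k + 9 N_0^2/(10 a_0)$ is nonzero (in fact strictly positive after sign flip) by Lemma \ref{ek-na}, so all three expressions are well-defined. The only substantive point to verify is the invertibility and differentiability of $\beta^{-1}$ at $\beta(c_0)$, which I do not expect to be a genuine obstacle since it is immediate from Proposition \ref{betathm} together with $c_0 > -\ln 3$; everything else amounts to the chain rule and bookkeeping of powers of $E_0$.
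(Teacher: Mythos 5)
Your proposal is correct and is essentially the paper's own argument: the paper likewise writes $c_\theta=\beta^{-1}\bigl(B(N_\theta,P_\theta,E_\theta)\bigr)$, applies the chain rule with the factor $1/\beta'(c_0)=(\beta^{-1})'(\beta(c_0))$ supplied by Lemma \ref{beta0}, computes the same three partial derivatives of $B$, and uses $P_0=0$ to simplify. The only cosmetic difference is that the paper carries the quotient $(E_\theta-\tfrac{8}{5}P_\theta^2/N_\theta)/(E_\theta-P_\theta^2/N_\theta)^{8/5}$ before setting $\theta=0$, whereas you evaluate at $(N_0,0,E_0)$ first; the computations agree.
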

\begin{proof}
Thanks to Proposition \ref{betathm}, the definition of $c_{\theta}$ in (\ref{recall ac}) and the assumption $c_{\theta}>-\ln3$, we can write
\begin{align}\label{cbeta}
c_{\theta}=\beta^{-1}\left( \frac{N_{\theta}}{\left(E_{\theta}-\frac{P_{\theta}^2}{N_{\theta}}\right)^{\frac{3}{5}}}\right).
\end{align}
We differentiate (\ref{cbeta}) w.r.t. $N_{\theta}$:
\begin{align*}
\frac{\partial c_\theta}{\partial N_\theta}\bigg|_{\theta = 0}
= \frac{1}{\beta'(c_\theta)} \frac{\partial}{\partial N_\theta}\left( \frac{N_\theta}{(E_\theta-\frac{P_\theta^2}{N_\theta})^{\frac{3}{5}}}\right)\bigg|_{\theta = 0} 	
= \frac{1}{\beta'(c_\theta)} \frac{(E_\theta-\frac{8}{5}\frac{P_\theta^2}{N_\theta})}{(E_\theta-\frac{P_\theta^2}{N_\theta})^{\frac{8}{5}}}\bigg|_{\theta = 0}.
\end{align*}
We then recall $P_0=0$ and Lemma \ref{beta0}:
\begin{align*}
\frac{\partial c_\theta}{\partial N_\theta}\bigg|_{\theta = 0}&=\frac{E_0^\frac{8}{5}}{-E_0k+\frac{9N_0^2}{10a_0}}\frac{1}{E_0^{\frac{3}{5}}}	
=\frac{E_0}{-E_0k+\frac{9N_0^2}{10a_0}}.
\end{align*}
A similar computation using $P_0=0$ gives
\begin{align*}
\frac{\partial c_\theta}{\partial P_\theta}\bigg|_{\theta = 0}
= \frac{1}{\beta'(c_\theta)} \frac{\partial}{\partial P_\theta}\left(\ \frac{N_\theta}{(E_\theta-\frac{P_\theta^2}{N_\theta})^{\frac{3}{5}}}\right)\bigg|_{\theta = 0}	
= \frac{1}{\beta'(c_\theta)} \frac{\frac{6}{5}P_\theta}{(E_\theta-\frac{P_\theta^2}{N_\theta})^{\frac{8}{5}}}\bigg|_{\theta = 0}	
=0,
\end{align*}
and
\begin{align*}
\frac{\partial c_\theta}{\partial E_\theta}\bigg|_{\theta = 0}
= \frac{1}{\beta'(c_\theta)} \frac{\partial}{\partial E_\theta}\left( \frac{N_\theta}{(E_\theta-\frac{P_\theta^2}{N_\theta})^{\frac{3}{5}}}\right)\bigg|_{\theta = 0}	
= \frac{1}{\beta'(c_\theta)}  \frac{-\frac{3}{5}N_\theta}{(E_\theta-\frac{P_\theta^2}{N_\theta})^{\frac{8}{5}}}\bigg|_{\theta = 0}	
=-\frac{3}{5}\frac{N_0}{-E_0k+\frac{9N_0^2}{10a_0}}.
\end{align*}
\end{proof}
We now compute the derivatives of $a_\theta$ with respect to the macroscopic fields.
\begin{lemma}\label{diffa}  We have
\begin{align*}
&(1) \ \frac{\partial a_\theta}{\partial N_\theta}\bigg|_{\theta = 0} =	-\frac{3}{5}\frac{N_0}{-E_0k+\frac{9N_0^2}{10a_0}}, \cr
&(2) \ \frac{\partial a_\theta}{\partial P_\theta}\bigg|_{\theta = 0} =	0, \cr
&(3) \ \frac{\partial a_\theta}{\partial E_\theta}\bigg|_{\theta = 0} =\frac{2}{5}\frac{a_0k}{-E_0k+\frac{9N_0^2}{10a_0}},
\end{align*}
where $k$ is defined by (\ref{kdef}).
\end{lemma}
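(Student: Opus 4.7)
The plan is to differentiate the explicit formula for $a_\theta$ directly, using the result of Lemma \ref{diffc} to handle the implicit dependence on the macroscopic fields through $c_\theta$. Writing
\[
g(c) = \int_{\mathbb{R}^3}\frac{1}{e^{|p|^2+c}+1}\,dp,
\]
the definition (\ref{recall ac}) gives $a_\theta = g(c_\theta)^{2/3}\,N_\theta^{-2/3}$. Evaluating at $\theta=0$ and solving yields the convenient identity $g(c_0) = a_0^{3/2}\,N_0$, which will be used repeatedly to convert powers of $g(c_0)$ into powers of $a_0$ and $N_0$.

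Next I will compute $g'(c_0)$. Differentiation under the integral sign gives
\[
g'(c) = -\int_{\mathbb{R}^3}\frac{e^{|p|^2+c}}{(e^{|p|^2+c}+1)^2}\,dp,
\]
and the substitution $p \mapsto \sqrt{a_0}\,p$, combined with the observation $m-m^2 = e^{a_0|p|^2+c_0}/(e^{a_0|p|^2+c_0}+1)^2$, identifies $g'(c_0) = -a_0^{3/2}\,k$ with $k$ as in (\ref{kdef}).

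Now I apply the chain rule. For a generic macroscopic variable $Z \in \{N_\theta, P_\theta, E_\theta\}$,
\[
\frac{\partial a_\theta}{\partial Z} = \frac{2}{3}\,g(c_\theta)^{-1/3}\,g'(c_\theta)\,N_\theta^{-2/3}\,\frac{\partial c_\theta}{\partial Z} \;+\; g(c_\theta)^{2/3}\,\frac{\partial N_\theta^{-2/3}}{\partial Z}.
\]
Case (2) is then immediate: Lemma \ref{diffc}(2) gives $\partial c_\theta/\partial P_\theta|_{\theta=0} = 0$ and the second term is absent, so $\partial a_\theta/\partial P_\theta|_{\theta=0} = 0$. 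Case (3) has no direct $E_\theta$-dependence in the $N_\theta^{-2/3}$ factor, so inserting $g(c_0)^{-1/3} = a_0^{-1/2}N_0^{-1/3}$, $g'(c_0) = -a_0^{3/2}k$, and $\partial c_\theta/\partial E_\theta|_{\theta=0}$ from Lemma \ref{diffc}(3) produces the stated expression directly.

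The only delicate case is (1), and this is the step I expect to require actual bookkeeping. Both the chain-rule term and the explicit differentiation of $N_\theta^{-2/3}$ contribute, giving
\[
\frac{\partial a_\theta}{\partial N_\theta}\bigg|_{\theta=0} = -\frac{2a_0 k}{3 N_0}\,\frac{E_0}{-E_0k + \frac{9N_0^2}{10a_0}} \;-\; \frac{2a_0}{3N_0}.
\]
Placing the two terms over the common denominator $D := -E_0k + 9N_0^2/(10a_0)$, the numerator becomes $-2a_0kE_0 - 2a_0 D = -9N_0^2/5$ after the $2a_0 E_0 k$ contributions cancel; this collapses the expression to $-\tfrac{3}{5}N_0/D$, which is exactly (1). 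Lemma \ref{ek-na} ensures $D \neq 0$ so all these manipulations are legitimate under the hypothesis $c_0 > -\ln 3$.
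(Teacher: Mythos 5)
Your proof is correct and follows essentially the same route as the paper's: both differentiate the explicit formula $a_\theta=\big(\int_{\mathbb{R}^3}(e^{|p|^2+c_\theta}+1)^{-1}dp\big)^{2/3}N_\theta^{-2/3}$ via the chain rule through $c_\theta$, invoke Lemma \ref{diffc}, and use the scaling identities $\int_{\mathbb{R}^3}(e^{|p|^2+c_0}+1)^{-1}dp=a_0^{3/2}N_0$ and $\int_{\mathbb{R}^3}e^{|p|^2+c_0}(e^{|p|^2+c_0}+1)^{-2}dp=a_0^{3/2}k$ before simplifying over the common denominator $-E_0k+\tfrac{9N_0^2}{10a_0}$. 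The algebra in all three cases checks out, so no changes are needed.
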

\begin{proof} We recall
\begin{align*}
a_{\theta}=\left(\frac{\int_{\mathbb{R}^3}\frac{1}{e^{|p|^2+c_{\theta}}+1}dp}{N_{\theta}}\right)^\frac{2}{3},
\end{align*}
to compute
\begin{align*}
(1) ~ \frac{\partial a_\theta}{\partial N_\theta}\bigg|_{\theta = 0}
&= \frac{2}{3} \left(\frac{\int_{\mathbb{R}^3}\frac{1}{e^{|p|^2+c_\theta}+1}dp}{N_\theta}\right)^{-\frac{1}{3}}
\frac{\partial}{\partial N_\theta} \left(\frac{\int_{\mathbb{R}^3}\frac{1}{e^{|p|^2+c_\theta}+1}dp}{N_\theta}\right)\bigg|_{\theta = 0}	\cr
&= \frac{2}{3} \left(\frac{\int_{\mathbb{R}^3}\frac{1}{e^{|p|^2+c_0}+1}dp}{N_0}\right)^{-\frac{1}{3}}
\left(\frac{N_0\int_{\mathbb{R}^3}\frac{-e^{|p|^2+c_0}}{(e^{|p|^2+c_0}+1)^2}dp\frac{\partial c_\theta}{\partial N_\theta}\Big|_{\theta=0}-\int_{\mathbb{R}^3}\frac{1}{e^{|p|^2+c_0}+1}dp}{N_0^2}\right).
\end{align*}
We then employ Lemma \ref{diffc} (1) and (\ref{k}) to proceed further as
\begin{align*}
\frac{\partial a_\theta}{\partial N_\theta}\bigg|_{\theta = 0}
&= \frac{2}{3} \left(\frac{a_0^{\frac{3}{2}}N_0}{N_0}\right)^{-\frac{1}{3}}
\left(\frac{N_0(-a_0^{\frac{3}{2}}k)\frac{\partial c_\theta}{\partial N_\theta}\Big|_{\theta=0}-a_0^{\frac{3}{2}}N_0}{N_0^2}\right)	\cr
&= \frac{2}{3} a_0
\left(\frac{-k\frac{\partial c_\theta}{\partial N_\theta}\Big|_{\theta=0}-1}{N_0}\right)	\cr
&= \frac{2}{3} \frac{a_0}{N_0}\left(-k\frac{E_0}{-E_0k+\frac{9N_0^2}{10a_0}}-1\right)	\cr
&=-\frac{3}{5}\frac{N_0}{-E_0k+\frac{9N_0^2}{10a_0}}.
\end{align*}

(2)~ A similar computation using Lemma \ref{diffc} (2) and $P_{0}=0$  gives
\begin{align*}
\frac{\partial a_\theta}{\partial P_\theta}\bigg|_{\theta = 0}
&= \frac{2}{3} \left(\frac{\int_{\mathbb{R}^3}\frac{1}{e^{|p|^2+c_\theta}+1}dp}{N_\theta}\right)^{-\frac{1}{3}}
\frac{\partial}{\partial P_\theta } \left(\frac{\int_{\mathbb{R}^3}\frac{1}{e^{|p|^2+c_\theta}+1}dp}{N_\theta}\right)\bigg|_{\theta = 0}	\cr
&= \frac{2}{3} \left(\frac{\int_{\mathbb{R}^3}\frac{1}{e^{|p|^2+c_\theta}+1}dp}{N_\theta}\right)^{-\frac{1}{3}}
\left(\frac{\int_{\mathbb{R}^3}\frac{-e^{|p|^2+c_\theta}}{(e^{|p|^2+c_\theta}+1)^2}dp\frac{\partial c_\theta}{\partial P_\theta}}{N_\theta}\right)\bigg|_{\theta = 0}	\cr
&=\frac{2}{3} \left(\frac{\int_{\mathbb{R}^3}\frac{1}{e^{|p|^2+c_0}+1}dp}{N_0}\right)^{-\frac{1}{3}}
\left(\frac{\int_{\mathbb{R}^3}\frac{-e^{|p|^2+c_0}}{(e^{|p|^2+c_0}+1)^2}dp\frac{\partial c_\theta}{\partial P_\theta}\Big|_{\theta = 0}}{N_0}\right)	\cr
&=0.
\end{align*}

(3)~ We use Lemma \ref{diffc} (3) as
 \begin{align*}
\frac{\partial a_\theta}{\partial E_\theta}\bigg|_{\theta = 0}
&= \frac{2}{3} \left(\frac{\int_{\mathbb{R}^3}\frac{1}{e^{|p|^2+c_\theta}+1}dp}{N_\theta}\right)^{-\frac{1}{3}}
\frac{\partial}{\partial E_\theta} \left(\frac{\int_{\mathbb{R}^3}\frac{1}{e^{|p|^2+c_\theta}+1}dp}{N_\theta}\right)\bigg|_{\theta = 0}	\cr
&= \frac{2}{3} \left(\frac{\int_{\mathbb{R}^3}\frac{1}{e^{|p|^2+c_0}+1}dp}{N_0}\right)^{-\frac{1}{3}}
\left(\frac{\int_{\mathbb{R}^3}\frac{-e^{|p|^2+c_0}}{(e^{|p|^2+c_0}+1)^2}dp\frac{\partial c_\theta}{\partial E_\theta}\Big|_{\theta=0}}{N_0}\right)	\cr
&=\frac{2}{3}\left(\frac{a_0^{\frac{3}{2}}N_0}{N_0}\right)^{-\frac{1}{3}}\left(\frac{-a_0^{\frac{3}{2}}k\frac{\partial c_\theta}{\partial E_\theta}|_{\theta=0}}{N_0}\right)	\cr
&=-\frac{2}{3}\frac{a_0k}{N_0}\frac{\partial c_\theta}{\partial E_\theta}\Big|_{\theta=0}	\cr
&=-\frac{2}{3}\frac{a_0k}{N_0}\left(-\frac{3}{5}\frac{N_0}{-E_0k+\frac{9N_0^2}{10a_0}}\right)	\cr
&=\frac{2}{5}\frac{a_0k}{-E_0k+\frac{9N_0^2}{10a_0}}.
\end{align*}
\end{proof}

\subsection{Derivatives of $\mathcal{F}(\theta)$} We now turn to the derivatives of $\mathcal{F}(\theta)$.
\begin{lemma}\label{derivatives F} We have
\begin{align*}
(1)~  \frac{\partial \mathcal{F}(\theta)}{\partial N_{\theta}}\bigg|_{\theta=0} &=\left(\frac{3}{5}\frac{N_0}{-E_0k+\frac{9N_0^2}{10a_0}}|p|^2-\frac{E_0}{-E_0k+\frac{9N_0^2}{10a_0}}\right)(m-m^2),\cr
(2) ~ \frac{\partial \mathcal{F}(\theta)}{\partial P_{\theta}}\bigg|_{\theta=0}&=\frac{2a_0}{N_0}p(m-m^2),\cr
(3) ~\frac{\partial \mathcal{F}(\theta)}{\partial E_{\theta}}\bigg|_{\theta=0}&=\left(-\frac{2}{5}\frac{a_0k}{-E_0k+\frac{9N_0^2}{10a_0}}|p|^2+\frac{3}{5}\frac{N_0}{-E_0k+\frac{9N_0^2}{10a_0}} \right)(m-m^2).
\end{align*}
\end{lemma}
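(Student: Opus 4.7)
The plan is to differentiate the explicit representation
\[
\mathcal{F}(\theta)=\frac{1}{e^{\Phi_\theta(p)}+1},\qquad \Phi_\theta(p)=a_\theta\Big|p-\tfrac{P_\theta}{N_\theta}\Big|^2+c_\theta,
\]
using the chain rule in each macroscopic variable $Y\in\{N_\theta,P_\theta,E_\theta\}$, and then evaluate at $\theta=0$ where $\mathcal{F}(0)=m$ and $P_0=0$. The common prefactor coming from the outside derivative is
\[
\partial_Y\mathcal{F}(\theta)\big|_{\theta=0}=-\frac{e^{a_0|p|^2+c_0}}{(e^{a_0|p|^2+c_0}+1)^2}\,\partial_Y\Phi_\theta\big|_{\theta=0}=-(m-m^2)\,\partial_Y\Phi_\theta\big|_{\theta=0},
\]
so the entire task reduces to evaluating $\partial_Y\Phi_\theta$ at $\theta=0$ and invoking Lemmas \ref{diffc} and \ref{diffa}.

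For part (1), I expand
\[
\partial_{N_\theta}\Phi_\theta=\partial_{N_\theta}a_\theta\,\Big|p-\tfrac{P_\theta}{N_\theta}\Big|^2+2a_\theta\Big(p-\tfrac{P_\theta}{N_\theta}\Big)\cdot\tfrac{P_\theta}{N_\theta^2}+\partial_{N_\theta}c_\theta.
\]
Setting $\theta=0$ and using $P_0=0$, the cross term vanishes, leaving $\partial_{N_\theta}a_\theta|_{\theta=0}|p|^2+\partial_{N_\theta}c_\theta|_{\theta=0}$; substituting the values from Lemma \ref{diffa}(1) and Lemma \ref{diffc}(1) and multiplying by $-(m-m^2)$ yields the stated identity. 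For part (3) the computation is even shorter because $\Phi_\theta$ depends on $E_\theta$ only through $a_\theta$ and $c_\theta$, so
\[
\partial_{E_\theta}\Phi_\theta\big|_{\theta=0}=\partial_{E_\theta}a_\theta\big|_{\theta=0}|p|^2+\partial_{E_\theta}c_\theta\big|_{\theta=0},
\]
and plugging in Lemma \ref{diffa}(3) and Lemma \ref{diffc}(3) gives (3).

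Part (2) is the only one where the $P_\theta$-dependence inside $|p-P_\theta/N_\theta|^2$ contributes nontrivially. Treating $P_\theta\in\mathbb{R}^3$ componentwise,
\[
\partial_{P_\theta}\Phi_\theta=\partial_{P_\theta}a_\theta\,\Big|p-\tfrac{P_\theta}{N_\theta}\Big|^2-\frac{2a_\theta}{N_\theta}\Big(p-\tfrac{P_\theta}{N_\theta}\Big)+\partial_{P_\theta}c_\theta.
\]
Evaluated at $\theta=0$, Lemma \ref{diffa}(2) and Lemma \ref{diffc}(2) kill the first and third terms, and $P_0=0$ simplifies the middle term to $-2a_0p/N_0$; applying the outside prefactor then produces $\tfrac{2a_0}{N_0}p(m-m^2)$.

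There is no substantive obstacle here: all the delicate analytic work (the invertibility of $\beta$ needed to define $c_\theta$, the identification of the denominator $-E_0k+9N_0^2/(10a_0)$, and its positivity via Lemma \ref{ek-na}) has already been isolated in the preceding lemmas. The only place one must stay alert is in the $N_\theta$-derivative, where one must verify that the differentiation of the quadratic $|p-P_\theta/N_\theta|^2$ produces a cross term that vanishes thanks to $P_0=0$; otherwise the proof is a bookkeeping exercise in applying the chain rule and substituting the formulas already supplied.
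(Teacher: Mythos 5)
Your proposal is correct and follows exactly the paper's argument: differentiate $\mathcal{F}(\theta)=(e^{a_\theta|p-P_\theta/N_\theta|^2+c_\theta}+1)^{-1}$ by the chain rule, observe that the prefactor $e^{a_0|p|^2+c_0}/(e^{a_0|p|^2+c_0}+1)^2$ equals $m-m^2$, note that the cross term and the $P_\theta$-derivatives of $a_\theta,c_\theta$ vanish at $\theta=0$ since $P_0=0$, and substitute the values of Lemmas \ref{diffc} and \ref{diffa}. No meaningful difference from the paper's proof.
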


\begin{proof} All of these identities follows from similar arguments as in the previous cases using Lemma \ref{diffc} and Lemma \ref{diffa}:
\begin{align*}
\noindent(1) \ \frac{\partial \mathcal{F}(\theta)}{\partial N_{\theta}}\bigg|_{\theta=0} &= \frac{- \left\{ \frac{\partial a_\theta}{\partial N_\theta}\big|p-\frac{P_\theta}{N_\theta}\big|^2+a_\theta\frac{2P_\theta}{N_\theta^2}(p-\frac{P_\theta}{N_\theta})+\frac{\partial c_\theta}{\partial N_\theta} \right\}e^{a_\theta\big|p-\frac{P_\theta}{N_\theta}\big|^2+c_\theta}}{(e^{a_\theta\big|p-\frac{P_\theta}{N_\theta}\big|^2+c_\theta}+1)^2}\bigg|_{\theta=0}	\cr
&= - \left( \frac{\partial a_\theta}{\partial N_\theta}\bigg|_{\theta=0}|p|^2+\frac{\partial c_\theta}{\partial N_\theta}\bigg|_{\theta=0} \right)\frac{e^{a_0|p|^2+c_0}}{(e^{a_0|p|^2+c_0}+1)^2}\cr &=\left(\frac{3}{5}\frac{N_0}{-E_0k+\frac{9N_0^2}{10a_0}}|p|^2-\frac{E_0}{-E_0k+\frac{9N_0^2}{10a_0}}\right)(m-m^2).
\end{align*}

\begin{align*}
\noindent(2) \ \frac{\partial \mathcal{F}(\theta)}{\partial P_{\theta}}\bigg|_{\theta=0}&=\frac{-\left\{ \frac{\partial a_\theta}{\partial P_\theta}\big|p-\frac{P_\theta}{N_\theta}\big|^2-a_\theta\frac{2}{N_\theta}(p-\frac{P_\theta}{N_\theta})+\frac{\partial c_\theta}{\partial P_\theta} \right\}e^{a_\theta\big|p-\frac{P_\theta}{N_\theta}\big|^2+c_\theta}}{(e^{a_\theta\big|p-\frac{P_\theta}{N_\theta}\big|^2+c_\theta}+1)^2}\bigg|_{\theta=0}	\cr
&=-\left( \frac{\partial a_\theta}{\partial P_\theta}\bigg|_{\theta=0}|p|^2-a_0\frac{2}{N_0}p+\frac{\partial c_\theta}{\partial P_\theta}\bigg|_{\theta=0} \right)\frac{e^{a_0|p|^2+c_0}}{(e^{a_0|p|^2+c_0}+1)^2}\cr	
&=\frac{2a_0}{N_0}p(m-m^2).
\end{align*}

\begin{align*}
\hspace{-1.2cm}(3) \ \frac{\partial \mathcal{F}(\theta)}{\partial E_{\theta}}\bigg|_{\theta=0} &= \frac{-\left\{ \frac{\partial a_\theta}{\partial E_\theta}\big|p-\frac{P_\theta}{N_\theta}\big|^2+\frac{\partial c_\theta}{\partial E_\theta} \right\}e^{a_\theta\big|p-\frac{P_\theta}{N_\theta}\big|^2+c_\theta}}{(e^{a_\theta\big|p-\frac{P_\theta}{N_\theta}\big|^2+c_\theta}+1)^2}\bigg|_{\theta=0}	\cr
&= -\left(\frac{\partial a_\theta}{\partial E_\theta}\bigg|_{\theta=0}|p|^2+\frac{\partial c_\theta}{\partial E_\theta}\bigg|_{\theta=0} \right)\frac{e^{a_0|p|^2+c_0}}{(e^{a_0|p|^2+c_0}+1)^2}	\cr
&=\left(-\frac{2}{5}\frac{a_0k}{-E_0k+\frac{9N_0^2}{10a_0}}|p|^2+\frac{3}{5}\frac{N_0}{-E_0k+\frac{9N_0^2}{10a_0}} \right)(m-m^2).
\end{align*}
\end{proof}
%
%
%
%
\subsection{Proof of Theorem \ref{Linearize}}
Now we turn to the proof of Theorem \ref{Linearize}. Using Taylor's theorem around $\theta=0$, we obtain
\begin{align}\label{turn back to}
\mathcal{F}(1)=\mathcal{F}(0)+\mathcal{F}'(0)+\int_0^1\mathcal{F}''(\theta)(1-\theta)d\theta.
\end{align}
We know $\mathcal{F}(0)=m$. It remains to show for the  second and the third term in the r.h.s.\newline

\noindent(i) $\mathcal{F}'(0)$ : By chain rule, we have
\begin{align*}
\mathcal{F}'(0)&=\frac{d}{d\theta}\mathcal{F}(N_{\theta},P_{\theta},E_{\theta})|_{\theta=0}	\cr
&=\left(\frac{\partial N_{\theta}}{\partial \theta}\frac{\partial \mathcal{F}(\theta)}{\partial N_{\theta}}+\frac{\partial P_{\theta}}{\partial \theta}\frac{\partial \mathcal{F}(\theta)}{\partial P_{\theta}}+\frac{\partial E_{\theta}}{\partial \theta}\frac{\partial \mathcal{F}(\theta)}{\partial E_{\theta}}\right)\bigg|_{\theta=0}\cr
&=(N-N_0)\frac{\partial \mathcal{F}(\theta)}{\partial N_{\theta}}\bigg|_{\theta=0}+P\frac{\partial \mathcal{F}(\theta)}{\partial P_{\theta}}\bigg|_{\theta=0}+(E-E_0)\frac{\partial \mathcal{F}(\theta)}{\partial E_{\theta}}\bigg|_{\theta=0}.
\end{align*}
In the last line, we used $P_0=0$.
Then Lemma \ref{derivatives F}, together with
\begin{align*}
N-N_0&=\int_{\mathbb{R}^3}f\sqrt{m-m^2}dp,	\cr
P &= \int_{\mathbb{R}^3}fp\sqrt{m-m^2}dp,	\cr
E-E_0&=\int_{\mathbb{R}^3}f|p|^2\sqrt{m-m^2}dp,
\end{align*}
yields
\begin{align*}
\mathcal{F}'(0)
&= \int_{\mathbb{R}^3}f\sqrt{m-m^2}~dp\left(\frac{3}{5}\frac{N_0}{-E_0k+\frac{9N_0^2}{10a_0}}|p|^2-\frac{E_0}{-E_0k+\frac{9N_0^2}{10a_0}}\right)(m-m^2)	\cr
&+\int_{\mathbb{R}^3}fp\sqrt{m-m^2}dp\left(\frac{2a_0}{N_0}\right)p\,(m-m^2)\cr
&+\int_{\mathbb{R}^3}f|p|^2\sqrt{m-m^2}\,dp\left(-\frac{2}{5}\frac{a_0k}{-E_0k+\frac{9N_0^2}{10a_0}}|p|^2+\frac{3}{5}\frac{N_0}{-E_0k+\frac{9N_0^2}{10a_0}} \right)(m-m^2)	\cr
&= I+II+III\cr
&=\langle f,e_1\rangle_{L^2_p} e_1\sqrt{m-m^2}+II+\big(I+III-\langle f,e_1\rangle_{L^2_p} e_1\sqrt{m-m^2}~\big).
\end{align*}
We first show that $II$ is projection on $e_i$ $(i=2,3,4)$:
\begin{lemma}\label{II lemma}
We have
\begin{align}\label{II}
II=\sum_{i=2,3,4}\langle f,e_i\rangle_{L^2_p} e_i\sqrt{m-m^2}.
\end{align}
\end{lemma}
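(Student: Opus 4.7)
The plan is to unpack the right-hand side of \eqref{II} using the explicit form of the basis elements $e_2,e_3,e_4$ from Definition \ref{Pfdef} and compare it termwise with the explicit form of $II$. The only nontrivial piece is the identification of the coefficient $\frac{2a_0}{N_0}$ appearing in $II$ with $1/\int_{\mathbb{R}^3}p_j^2(m-m^2)dp$, i.e., with the normalization constants of $e_2,e_3,e_4$.

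First I would rewrite $II$ componentwise, namely
\[
II=\frac{2a_0}{N_0}\sum_{j=1}^{3}\Big(\int_{\mathbb{R}^3}f p_j\sqrt{m-m^2}\,dp\Big)\,p_j(m-m^2),
\]
and the target sum as
\[
\sum_{i=2,3,4}\langle f,e_i\rangle_{L^2_p}e_i\sqrt{m-m^2}=\sum_{j=1}^{3}\frac{\int_{\mathbb{R}^3}f p_j\sqrt{m-m^2}\,dp}{\int_{\mathbb{R}^3}p_j^2(m-m^2)\,dp}\,p_j(m-m^2).
\]
Matching coefficients, it suffices to prove the identity
\[
\int_{\mathbb{R}^3}p_j^2(m-m^2)\,dp=\frac{N_0}{2a_0},\qquad j=1,2,3.
\]

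By the spherical symmetry of $m$, all three integrals are equal, so each is $\tfrac{1}{3}\int_{\mathbb{R}^3}|p|^2(m-m^2)\,dp$. Passing to spherical coordinates and integrating by parts with $u'=r^2$, $v=(e^{a_0r^2+c_0}+1)^{-1}$ in the expression
\[
N_0=4\pi\int_0^{\infty}\frac{r^2}{e^{a_0r^2+c_0}+1}\,dr,
\]
the boundary terms vanish and one obtains
\[
N_0=\frac{2a_0}{3}\cdot 4\pi\int_0^{\infty}\frac{r^4 e^{a_0r^2+c_0}}{(e^{a_0r^2+c_0}+1)^2}\,dr=\frac{2a_0}{3}\int_{\mathbb{R}^3}|p|^2(m-m^2)\,dp,
\]
which immediately yields the required identity and completes the proof.

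There is no real obstacle here; the only point that requires care is the integration-by-parts step relating the zeroth moment $N_0$ to the weighted second moment $\int|p|^2(m-m^2)dp$, which is the same type of manipulation already used in the proof of Proposition \ref{betathm} and in Lemma \ref{beta0}. Everything else is bookkeeping based on the orthonormalization chosen in Definition \ref{Pfdef} and the parity (actually spherical symmetry) of $m$.
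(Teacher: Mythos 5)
Your proof is correct and follows essentially the same route as the paper: both reduce the lemma to the identity $\int_{\mathbb{R}^3}p_j^2(m-m^2)\,dp=\frac{N_0}{2a_0}$, which the paper obtains via the rescaled integration-by-parts identity from Proposition \ref{betathm} together with $N_0=a_0^{-3/2}\int_{\mathbb{R}^3}(e^{|p|^2+c_0}+1)^{-1}dp$, while you perform the equivalent integration by parts directly in the unscaled variable. No gaps; the boundary terms indeed vanish and the bookkeeping matches the paper's computation of $e_2,e_3,e_4$.
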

\begin{proof}
First, we make the following observation ($i=2,3,4$):
\begin{align*}
e_i&= \frac{p_i\sqrt{m-m^2}}{\sqrt{\int_{\mathbb{R}^3}p_i^2(m-m^2)dp}} 	\cr
&= \left(\int_{\mathbb{R}^3}p_i^2\frac{e^{a_0|p|^2+c_0}}{(e^{a_0|p|^2+c_0}+1)^2}dp\right)^{-\frac{1}{2}}p_i\sqrt{m-m^2}	\cr
&= \left(\int_{\mathbb{R}^3}\frac{1}{3}|p|^2\frac{e^{a_0|p|^2+c_0}}{(e^{a_0|p|^2+c_0}+1)^2}dp\right)^{-\frac{1}{2}}p_i\sqrt{m-m^2}	\cr
&= \left(\frac{1}{3}a_0^{-\frac{5}{2}}\int_{\mathbb{R}^3}\frac{|p|^2e^{|p|^2+c_0}}{(e^{|p|^2+c_0}+1)^2}dp\right)^{-\frac{1}{2}}p_i\sqrt{m-m^2}.
\end{align*}
We then recall the following identity obtained in the proof of Proposition \ref{betathm}:
\begin{align}\label{to derive}
\int_{\mathbb{R}^3}\frac{|p|^2e^{|p|^2+c_0}}{(e^{|p|^2+c_0}+1)^2}dp=\frac{3}{2}\int_{\mathbb{R}^3}\frac{1}{e^{|p|^2+c_0}+1}dp,
\end{align}
and derive from (\ref{by}) that
\begin{align}\label{N_0}
N_0=a_0^{-3/2}\int_{\mathbb{R}^3}\frac{1}{e^{|p|^2+c_0}+1}dp,
\end{align}
to get
\begin{align*}	
e_i&= \left(\frac{1}{2}a_0^{-\frac{5}{2}}\int_{\mathbb{R}^3}\frac{1}{e^{|p|^2+c_0}+1}dp\right)^{-\frac{1}{2}}p_i\sqrt{m-m^2}	\cr
&= \left(\frac{1}{2}\frac{N_0}{a_0}\right)^{-\frac{1}{2}}p_i\sqrt{m-m^2}	\cr
&= \left(\frac{2a_0}{N_0}\right)^{\frac{1}{2}}p_i\sqrt{m-m^2}.
\end{align*}
This immediately gives
\begin{align*}
II&=\left(\int_{\mathbb{R}^3}fp\sqrt{m-m^2}dp\right)\left(\frac{2a_0}{N_0}\right)p\,(m-m^2)\cr
&=\sum_{i=2,3,4}\left[\int_{\mathbb{R}^3}f\left\{\left(\frac{2a_0}{N_0}\right)^{\frac{1}{2}}p_i\sqrt{m-m^2}\right\}dp\right] \left\{\left(\frac{2a_0}{N_0}\right)^{\frac{1}{2}}p_i\sqrt{m-m^2}\right\}\sqrt{m-m^2}\cr
&=\sum_{i=2,3,4}\langle f,e_i\rangle_{L^2_p} e_i\sqrt{m-m^2}.
\end{align*}
\end{proof}
For the remaining terms,  we claim that
%
%
%
%
\begin{lemma}\label{I+III lemma} We have
\begin{align}\label{I+III}
I+III-\langle f,e_1\rangle_{L^2_p} e_1\sqrt{m-m^2}=\langle f,e_5 \rangle_{L^2_p} e_5\sqrt{m-m^2}.
\end{align}
\end{lemma}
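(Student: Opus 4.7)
The plan is to reduce both sides of (\ref{I+III}) to expressions of the form $[\mathcal{A}|p|^2 + \mathcal{B}](m-m^2)$, with coefficients $\mathcal{A}, \mathcal{B}$ depending only on $N-N_0$ and $E-E_0$, and then match coefficients. Let me abbreviate $D := -E_0 k + \frac{9N_0^2}{10 a_0}$, so that Lemma \ref{ek-na} gives $D < 0$. The left-hand side of (\ref{I+III}) is already in this form: from the definitions of $I$ and $III$ together with the identity $\langle f,e_1\rangle_{L^2_p} e_1 \sqrt{m-m^2} = k^{-1}(N-N_0)(m-m^2)$, it becomes a direct linear combination of $|p|^2(m-m^2)$ and $(m-m^2)$.

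To put the right-hand side in the same form, I first need the normalization constant for $e_5$. The two moments $\int |p|^2 (m-m^2)dp$ and $\int |p|^4 (m-m^2)dp$ are evaluated by the same integration-by-parts trick used in the proof of Proposition \ref{betathm} (taking $u' = \frac{2r e^{r^2+c_0}}{(e^{r^2+c_0}+1)^2}$ in spherical coordinates, with $v = \tfrac{1}{2}r^3$ and $v = \tfrac{1}{2}r^5$ respectively, and using (\ref{N_0})). This yields the explicit values $\int |p|^2(m-m^2)dp = \tfrac{3N_0}{2a_0}$ and $\int |p|^4(m-m^2)dp = \tfrac{5E_0}{2a_0}$. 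Setting $A := k^{-1}\int |p|^2(m-m^2)dp = \tfrac{3N_0}{2a_0 k}$, the normalizing denominator in (\ref{orthobasis}) satisfies
\[
K^2 \;:=\; \int\bigl(|p|^2 - A\bigr)^2(m-m^2)dp \;=\; \frac{5E_0}{2a_0} - \frac{9N_0^2}{4a_0^2 k} \;=\; -\frac{5D}{2a_0 k},
\]
which is positive since $D<0$. Then $\langle f,e_5\rangle_{L^2_p} = K^{-1}\bigl[(E-E_0) - A(N-N_0)\bigr]$, and expanding $\langle f,e_5\rangle_{L^2_p} e_5 \sqrt{m-m^2} = K^{-2}[(E-E_0)-A(N-N_0)](|p|^2 - A)(m-m^2)$ puts the right-hand side of (\ref{I+III}) in the desired form.

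The verification of (\ref{I+III}) then reduces to matching two coefficients. The coefficient of $|p|^2(m-m^2)$ on each side simplifies to
\[
\frac{1}{5D}\bigl[3N_0(N-N_0) - 2a_0 k(E-E_0)\bigr],
\]
directly from the above representations. For the constant coefficient, on the left-hand side one gets $-\bigl(\tfrac{E_0}{D}+\tfrac{1}{k}\bigr)(N-N_0) + \tfrac{3N_0}{5D}(E-E_0)$, and the single algebraic identity $E_0 k + D = \tfrac{9N_0^2}{10 a_0}$ (immediate from the definition of $D$) collapses the first factor to $-\tfrac{9N_0^2}{10 a_0 Dk}(N-N_0)$, which matches the constant coefficient produced by expanding the right-hand side. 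The argument is essentially bookkeeping; the only substantive inputs are the two moment evaluations and the identity $E_0 k + D = 9N_0^2/(10 a_0)$, which is precisely the structure already encoded in the derivatives computed in Lemmas \ref{diffc} and \ref{diffa}. The main obstacle is simply arranging the algebra cleanly enough that these identifications become visible.
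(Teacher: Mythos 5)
Your proposal is correct and follows essentially the same route as the paper: both rest on the same two moment evaluations $\int|p|^2(m-m^2)dp=\tfrac{3N_0}{2a_0}$ and $\int|p|^4(m-m^2)dp=\tfrac{5E_0}{2a_0}$ (obtained by the same integration by parts), the resulting explicit form of $e_5$ with normalization $K^2=-\tfrac{5D}{2a_0k}$, and the identity $E_0k+D=\tfrac{9N_0^2}{10a_0}$. Your coefficient-matching in the basis $\{|p|^2(m-m^2),\,(m-m^2)\}$ is just a reorganization of the paper's factorization of $I-\langle f,e_1\rangle_{L^2_p}e_1\sqrt{m-m^2}$ and $III$ into multiples of $\bigl(|p|^2-\tfrac{3N_0}{2a_0k}\bigr)(m-m^2)$, and all the checks you state do go through.
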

\begin{proof}
We recall the definition of $k=\int_{\mathbb{R}^3}m-m^2dp$ to compute
\begin{align}\label{e1}
e_1= \frac{\sqrt{m-m^2}}{\sqrt{\int_{\mathbb{R}^3}m-m^2dp}}
= k^{-\frac{1}{2}}\sqrt{m-m^2}.
\end{align}
This readily yields
\begin{align}\label{fe1}
\langle f,e_1 \rangle_{L^2_p}e_1\sqrt{m-m^2} = \frac{1}{k}\left(\int_{\mathbb{R}^3}f\sqrt{m-m^2}dp\right)(m-m^2).
\end{align}
We now turn to the representation of $e_5$. First, we recall the definition of $k$ in (\ref{kdef}) and use (\ref{to derive}) and (\ref{N_0}) to compute the numerator of $e_5$ as follows:
\begin{align*}
&|p|^2\sqrt{m-m^2}-\frac{\int_{\mathbb{R}^3}|p|^2(m-m^2)dp}{\int_{\mathbb{R}^3}m-m^2dp}\sqrt{m-m^2}\cr
&\hspace{1cm}=\left(|p|^2-\frac{a_0^{-\frac{5}{2}}}{k}\int_{\mathbb{R}^3}\frac{|p|^2e^{|p|^2+c_0}}{(e^{|p|^2+c_0}+1)^2}dp\right)\sqrt{m-m^2}\cr
&\hspace{1cm}=\left(|p|^2-\frac{3a_0^{-\frac{3}{2}}}{2a_0k}\int_{\mathbb{R}^3}\frac{1}{e^{|p|^2+c_0}+1}dp\right)\sqrt{m-m^2}	\cr
&\hspace{1cm}=\left(|p|^2-\frac{3N_0}{2a_0k}\right)\sqrt{m-m^2}.
\end{align*}

For the computation of the denominator of $e_5$ (which we denote by $A$ for simplicity), we first write it using the definition of $k$ and $N_0$ above as
\begin{align}\label{inserting}
\begin{split}
A&=\int_{\mathbb{R}^3}
\left(
|p|^2\sqrt{m-m^2}-\frac{\int_{\mathbb{R}^3}|p|^2(m-m^2)dp}{\int_{\mathbb{R}^3}m-m^2dp}\sqrt{m-m^2}
\right)^2dp	\cr
&=\int_{\mathbb{R}^3}|p|^4(m-m^2)dp-\frac{1}{k}\left(\int_{\mathbb{R}^3}|p|^2(m-m^2)dp\right)^2\cr
&=I_1+I_2.
\end{split}
\end{align}
For $I_1$, we first observe that
\begin{align}\label{return}
\int_{\mathbb{R}^3}|p|^4(m-m^2)dp&=a_0^{-\frac{7}{2}}\int_{\mathbb{R}^3}\frac{|p|^4e^{|p|^2+c_0}}{(e^{|p|^2+c_0}+1)^2}dp.
\end{align}
We then write it in the spherical coordinate:
\[
\int_{\mathbb{R}^3}\frac{|p|^4e^{|p|^2+c_0}}{(e^{|p|^2+c_0}+1)^2}dp=\int_{0}^{2\pi}\int_{0}^{\pi}\int_{0}^{\infty}\frac{r^6e^{r^2+c_0}}{(e^{r^2+c_0}+1)^2}dr\sin\phi d\phi d\theta,
\]
and carry out the integration by parts:
\[
u'=\frac{2re^{r^2+c_0}}{(e^{r^2+c_0}+1)^2},\quad v= \frac{1}{2}r^5,
\]
to get
\[
\int_{0}^{\infty}\frac{r^6e^{r^2+c_0}}{(e^{r^2+c_0}+1)^2}dr=\frac{5}{2}\int_{0}^{\infty}\frac{r^4}{e^{r^2+c_0}+1}dr.
\]
We then go back to (\ref{return}) with these observations and find
\begin{align*}
\int_{\mathbb{R}^3}|p|^4(m-m^2)dp&=\frac{5}{2}a_0^{-\frac{7}{2}}\int_{0}^{2\pi}\int_{0}^{\pi}\int_{0}^{\infty}\frac{r^4}{e^{r^2+c_0}+1}dr\sin\phi d\phi d\theta	\cr
&=\frac{5}{2}a_0^{-\frac{7}{2}}\int_{\mathbb{R}^3}\frac{|p|^2}{e^{|p|^2+c_0}+1}dp	\cr
&=\frac{5E_0}{2a_0},
\end{align*}
where we used the definition of $E_0$:
\[
E_0=\int_{\mathbb{R}^3}m|p|^2dp=\int_{\mathbb{R}^3}\frac{|p|^2}{e^{a_0|p|^2+c_0}+1}dp=a_0^{-5/2}\int_{\mathbb{R}^3}\frac{|p|^2}{e^{|p|^2+c_0}+1}dp.
\]
For $I_2$, we use (\ref{to derive}) and (\ref{N_0}) to derive
\begin{align*}
\int_{\mathbb{R}^3}|p|^2(m-m^2)dp&=a_0^{-\frac{5}{2}}\int_{\mathbb{R}^3}\frac{|p|^2e^{|p|^2+c_0}}{(e^{|p|^2+c_0}+1)^2}dp\cr
&=\frac{3}{2}a_0^{-\frac{5}{2}}\int_{\mathbb{R}^3}\frac{1}{e^{|p|^2+c_0}+1}dp\cr
&=\frac{3N_0}{2a_0}.
\end{align*}
Inserting these computations to (\ref{inserting}), we get the following representation of the denominator of $e_5$:
\begin{align*}
A=\frac{5E_0}{2a_0}-\frac{9N_0^2}{4ka_0^2}.
\end{align*}
We now combine all the above identities for the denominator and the numerator of $e_5$ to obtain
\begin{align}\label{e5}
e_5&=\left(\frac{2}{5}\frac{a_0k}{E_0k-\frac{9N_0^2}{10a_0}}\right)^{\frac{1}{2}}\left(|p|^2-\frac{3N_0}{2a_0k}\right)\sqrt{m-m^2}.
\end{align}

Now, from (\ref{fe1}) and the definition of $I$, we first compute
\begin{align*}
&I-\langle f,e_1\rangle_{L^2_p} e_1\sqrt{m-m^2}	\cr
&=\left(\int_{\mathbb{R}^3}f\sqrt{m-m^2}dp\right)\left(\frac{3}{5}\frac{N_0}{-E_0k+\frac{9N_0^2}{10a_0}}|p|^2-\frac{E_0}{-E_0k+\frac{9N_0^2}{10a_0}}-\frac{1}{k}\right)(m-m^2)	\cr
&=\frac{2}{5}\frac{a_0k}{-E_0k+\frac{9N_0^2}{10a_0}}\left(\int_{\mathbb{R}^3}f\sqrt{m-m^2}dp\right)\left\{\frac{3N_0}{2a_0k}\left(|p|^2-\frac{3N_0}{2a_0k}\right)\right\}(m-m^2)\cr
&=\frac{2}{5}\frac{a_0k}{E_0k-\frac{9N_0^2}{10a_0}}
\left\{\int_{\mathbb{R}^3}f\sqrt{m-m^2}\Big(-\frac{3N_0}{2a_0k}\Big)dp\right\}
\left(|p|^2-\frac{3N_0}{2a_0k}\right)(m-m^2).
\end{align*}
Similarly $III$ also arranged as follows:
\begin{align*}
III&=\left(\int_{\mathbb{R}^3}f\sqrt{m-m^2}|p|^2dp\right)\left(-\frac{2}{5}\frac{a_0k}{-E_0k+\frac{9N_0^2}{10a_0}}|p|^2+\frac{3}{5}\frac{N_0}{-E_0k+\frac{9N_0^2}{10a_0}}\right)(m-m^2)	\cr
&=\frac{2}{5}\frac{a_0k}{E_0k-\frac{9N_0^2}{10a_0}}
\left\{\int_{\mathbb{R}^3}f\sqrt{m-m^2}|p|^2dp\right\}\left(|p|^2-\frac{3N_0}{2a_0k}\right)(m-m^2).
\end{align*}
Combining these identities gives
\begin{align*}
&\left\{I-\langle f,e_1\rangle_{L^2_p} e_1\sqrt{m-m^2}\right\}+III	\cr
&\qquad=\frac{2}{5}\frac{a_0k}{E_0k-\frac{9N_0^2}{10a_0}}
\left\{\int_{\mathbb{R}^3}f\sqrt{m-m^2}\left(|p|^2-\frac{3N_0}{2a_0k}\right)dp\right\}\left(|p|^2-\frac{3N_0}{2a_0k}\right)
(m-m^2)	\cr
&\qquad=\langle f,e_5\rangle_{L^2_p} e_5\sqrt{m-m^2},
\end{align*}
where we used (\ref{e5}).
\end{proof}
%
%
%
%
%
\subsection{Computation of the 2nd order term in Theorem \ref{Linearize}}: We now turn to the representation of the nonlinear terms:\newline
%
%
%
%
\noindent(ii) $\displaystyle\int_0^1\mathcal{F}''(\theta)(1-\theta)d\theta$ : By an explicit computation, we obtain

\begin{align*}
\mathcal{F}''(\theta)&= \frac{d^2\mathcal{F}}{d\theta^2}(N_{\theta},P_{\theta},E_{\theta})	\cr
&=(N-N_0,P,E-E_0)^TD^2_{(N_{\theta},P_{\theta},E_{\theta})}\mathcal{F}(\theta)(N-N_0,P,E-E_0)	\cr
&=\sum_{1\leq i,j\leq5} \{D^2_{(N_{\theta},P_{\theta},E_{\theta})}\mathcal{F}(\theta)\}_{i,j} \langle f,e_i \rangle_{L^2_p} \langle f,e_j\rangle_{L^2_p}.
\end{align*}

We then represent the second derivative as follows:
\begin{lemma}\label{diff2} There exists polynomial $P_{i,j}^{\mathcal{F}}$ and $R_{i,j}^{\mathcal{F}}$ satisfying following condition :
\begin{align*}
&\sum_{1\leq i,j\leq5} \{D^2_{(N_{\theta},P_{\theta},E_{\theta})}\mathcal{F}(\theta)\}_{i,j} \langle f,e_i \rangle_{L^2_p} \langle f,e_j \rangle_{L^2_p} \cr
&\hspace{1.5cm}=\sum_{1\leq i,j\leq5}\frac{P_{i,j}^{\mathcal{F}}(\theta)}{R_{i,j}^{\mathcal{F}}(\theta)}(\mathcal{F}(\theta)-\mathcal{F}(\theta)^2)\langle f,e_i\rangle_{L^2_p} \langle f,e_j\rangle_{L^2_p},
\end{align*}
where $P_{i,j}^{\mathcal{F}}(\theta)$ is a generically defined polynomial of
\[
N_{\theta}~,a_{\theta},~\Big(p-\frac{P_{\theta}}{N_{\theta}}\Big), ~D_{(N_{\theta},P_{\theta}, E_{\theta})}(a_{\theta},c_{\theta}),
~ D^2_{(N_{\theta}, P_{\theta},E_{\theta})}(a_{\theta}, c_{\theta}),~\mathcal{F}(\theta),~ 
\]
 and $R_{i,j}^{\mathcal{F}}(\theta)$ is generic polynomial of $N_{\theta}$, satisfying the following
 structural assumptions:\newline
\begin{itemize}
\item \ $ (\mathcal{H}_{\mathcal{F}}1)  P_{i,j}^{\mathcal{F}}$ is a polynomial such that $P_{i,j}^{\mathcal{F}}(0,0,...,0)=0$.
\item \ $ (\mathcal{H}_{\mathcal{F}}2)  R_{i,j}^{\mathcal{F}}$ is a monomial.
\end{itemize}
In other words, for a multi-index  $m=(m_1,m_2,...,m_n)$,
\begin{itemize}
\item \ $(\mathcal{H}_{\mathcal{F}}1)  P_{i,j}^{\mathcal{F}}(x_1,x_2,...,x_n)=\sum_{m}a_m x_1^{m_1}x_2^{m_2}\cdots x_n^{m_n}$, where $a_0=0$.
\item \ $(\mathcal{H}_{\mathcal{F}}2)  R_{i,j}^{\mathcal{F}}(x_1,x_2,...,x_n)=a_m x_1^{m_1}x_2^{m_2}\cdots x_n^{m_n}$.
\end{itemize}
\end{lemma}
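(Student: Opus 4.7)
The plan is to exploit the chain rule systematically, using the structural identity
\begin{align*}
\partial_{X}\mathcal{F}(\theta) = -(\partial_{X}\phi(\theta))(\mathcal{F}(\theta)-\mathcal{F}(\theta)^2),
\end{align*}
where $\phi(\theta) = a_\theta|p - P_\theta/N_\theta|^2 + c_\theta$ and $X$ ranges over the macroscopic variables $\{N_\theta, P_{\theta,1}, P_{\theta,2}, P_{\theta,3}, E_\theta\}$. Differentiating a second time, I obtain
\begin{align*}
\partial^2_{X_iX_j}\mathcal{F}(\theta) = \Big[(\partial_{X_i}\phi)(\partial_{X_j}\phi)(1-2\mathcal{F}(\theta)) - \partial^2_{X_iX_j}\phi\Big](\mathcal{F}(\theta)-\mathcal{F}(\theta)^2),
\end{align*}
so that $(\mathcal{F}-\mathcal{F}^2)$ is uniformly factored out and the bracket becomes the candidate for $P_{i,j}^{\mathcal{F}}/R_{i,j}^{\mathcal{F}}$.

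Next, I would compute $\partial_{X_i}\phi$ and $\partial^2_{X_iX_j}\phi$ explicitly. Each such derivative is a polynomial combination of $a_\theta$, $(p - P_\theta/N_\theta)$, and the first/second derivatives of $(a_\theta, c_\theta)$, with the only non-polynomial pieces coming from differentiation of $P_\theta/N_\theta$, namely $\partial_{N_\theta}(P_\theta/N_\theta) = -P_\theta/N_\theta^2$ and $\partial_{P_{\theta,k}}(P_\theta/N_\theta) = N_\theta^{-1}e_k$. Iterating these introduces only negative integer powers of $N_\theta$, and clearing by the highest such power present reduces the bracket to the form $P_{i,j}^{\mathcal{F}}/N_\theta^{m_{ij}}$. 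This already yields $(\mathcal{H}_{\mathcal{F}}2)$ with $R_{i,j}^{\mathcal{F}} = N_\theta^{m_{ij}}$.

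To convert the quadratic form $(N-N_0,P,E-E_0)^T D^2_{(N_\theta,P_\theta,E_\theta)}\mathcal{F}(\theta)(N-N_0,P,E-E_0)$ into $\sum_{1\le i,j\le 5}\{D^2\mathcal{F}(\theta)\}_{ij}\langle f,e_i\rangle_{L^2_p}\langle f,e_j\rangle_{L^2_p}$, I would use the $\theta$-independent linear bijection between $(N-N_0, P, E-E_0)$ and $(\langle f,e_i\rangle_{L^2_p})_{1\le i\le 5}$ already set up in the preceding section: from the identities $N-N_0 = \int f\sqrt{m-m^2}dp$, $P_k = \int f p_k\sqrt{m-m^2}dp$, $E-E_0 = \int f|p|^2\sqrt{m-m^2}dp$ together with the explicit form of $e_1,\ldots,e_5$ in Definition \ref{Pfdef}, each entry of $(N-N_0, P, E-E_0)$ is a constant-coefficient linear combination of $\langle f,e_i\rangle_{L^2_p}$ (the constants depending only on $N_0$, $a_0$, $E_0$, $k$). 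Substituting and collecting terms exhibits the claimed $5\times 5$ representation, the change-of-basis constants being absorbed into $P_{i,j}^{\mathcal{F}}$.

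Finally, I would verify $(\mathcal{H}_{\mathcal{F}}1)$. Every monomial in $\partial_{X_i}\phi$ carries at least one factor drawn from $\{a_\theta, D_{(N_\theta,P_\theta,E_\theta)}a_\theta, D_{(N_\theta,P_\theta,E_\theta)}c_\theta\}$, and every monomial in $\partial^2_{X_iX_j}\phi$ carries at least one factor from $\{a_\theta, D a_\theta, D c_\theta, D^2 a_\theta, D^2 c_\theta\}$; hence setting all arguments of $P_{i,j}^{\mathcal{F}}$ simultaneously to zero kills the bracket, a property preserved after multiplication by $N_\theta^{m_{ij}}$. The main obstacle is the purely combinatorial bookkeeping in enumerating the cross-terms produced by differentiating $|p-P_\theta/N_\theta|^2$ (which generates linear-in-$(p-P_\theta/N_\theta)$ corrections on top of the $D a_\theta, D c_\theta$ pieces) and in confirming that, after clearing the $N_\theta$-denominators, no constant term inadvertently survives in $P_{i,j}^{\mathcal{F}}$.
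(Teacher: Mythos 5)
Your proposal is correct and follows essentially the same route as the paper: the paper likewise differentiates $\mathcal{F}(\theta)$ twice via the chain rule (carried out explicitly for the $(1,1)$ entry $\partial^2\mathcal{F}/\partial N_\theta^2$, with the other entries "treated similarly"), factors out $\mathcal{F}(\theta)-\mathcal{F}(\theta)^2$ using $\partial_X(\mathcal{F}-\mathcal{F}^2)=(1-2\mathcal{F})\partial_X\mathcal{F}$, and observes that the remaining bracket is a generic polynomial of the listed quantities over a monomial in $N_\theta$ with vanishing constant term. Your unified $\phi$-notation and the explicit remark on translating $(N-N_0,P,E-E_0)$ into the $\langle f,e_i\rangle_{L^2_p}$ coordinates are just a cleaner packaging of what the paper does in the displayed computation preceding the lemma.
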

\begin{proof}
We only consider the $(1,1)$ element of $D^2_{(N_{\theta},P_{\theta},E_{\theta})}\mathcal{F}(\theta)$, that is  $\frac{\partial^2 \mathcal{F}(\theta)}{\partial N_{\theta}^2}$. Other elements can be treated similarly. Thanks to Lemma \ref{derivatives F} (1), we have
\begin{align*}
\frac{\partial^2 \mathcal{F}(\theta)}{\partial N_{\theta}^2}
&=\frac{\partial }{\partial N_{\theta}}
\left(\frac{-\left\{ \frac{\partial a_{\theta}}{\partial N_{\theta}}|p-\frac{P_{\theta}}{N_{\theta}}|^2+a_{\theta}\frac{2P_{\theta}}{N_{\theta}^2}(p-\frac{P_{\theta}}{N_{\theta}})+\frac{\partial c_{\theta}}{\partial N_{\theta}} \right\}
e^{a_{\theta}|p-\frac{P_{\theta}}{N_{\theta}}|^2+c_{\theta}}}{(e^{a_{\theta}|p-\frac{P_{\theta}}{N_{\theta}}|^2+c_{\theta}}+1)^2}\right)	\cr
&=-\frac{\partial }{\partial N_{\theta}}
\left[ \frac{\partial a_{\theta}}{\partial N_{\theta}}\bigg|p-\frac{P_{\theta}}{N_{\theta}}\bigg|^2+a_{\theta}\frac{2P_{\theta}}{N_{\theta}^2}\left(p-\frac{P_{\theta}}{N_{\theta}}\right)+\frac{\partial c_{\theta}}{\partial N_{\theta}} \right]
(\mathcal{F}(\theta)-\mathcal{F}(\theta)^2)	\cr
& \quad -\left( \frac{\partial a_{\theta}}{\partial N_{\theta}}\bigg|p-\frac{P_{\theta}}{N_{\theta}}\bigg|^2+a_{\theta}\frac{2P_{\theta}}{N_{\theta}^2}\left(p-\frac{P_{\theta}}{N_{\theta}}\right)+\frac{\partial c_{\theta}}{\partial N_{\theta}} \right)
\frac{\partial }{\partial N_{\theta}}
\left[\mathcal{F}(\theta)-\mathcal{F}(\theta)^2\right]	\cr
&=I+II,
\end{align*}
where we used
\[
\frac{e^{a_{\theta}|p-\frac{P_{\theta}}{N_{\theta}}|^2+c_{\theta}}}{(e^{a_{\theta}|p-\frac{P_{\theta}}{N_{\theta}}|^2+c_{\theta}}+1)^2}=\mathcal{F}(\theta)-\mathcal{F}(\theta)^2.
\]
An explicit computation yields
\begin{align*}
I&=
-\left(\frac{\partial^2a_{\theta}}{\partial N_{\theta}^2}\bigg|p-\frac{P_{\theta}}{N_{\theta}}\bigg|^2 + \frac{\partial a_{\theta}}{\partial N_{\theta}}\frac{4P_{\theta}}{N_{\theta}^2}\left(p-\frac{P_{\theta}}{N_{\theta}}\right)-4a_{\theta}\frac{P_{\theta}}{N_{\theta}^3}\left(p-\frac{P_{\theta}}{N_{\theta}}\right)+2a_{\theta}\frac{P_{\theta}^2}{N_{\theta}^4}
+ \frac{\partial^2c_{\theta}}{\partial N_{\theta}^2}\right)	\cr
&\quad \times (\mathcal{F}(\theta)-\mathcal{F}(\theta)^2).
\end{align*}
Similarly,
\begin{align*}
II&=
-\left( \frac{\partial a_{\theta}}{\partial N_{\theta}}\bigg|p-\frac{P_{\theta}}{N_{\theta}}\bigg|^2+a_{\theta}\frac{2P_{\theta}}{N_{\theta}^2}\left(p-\frac{P_{\theta}}{N_{\theta}}\right)+\frac{\partial c_{\theta}}{\partial N_{\theta}} \right)
\left(\frac{\partial \mathcal{F}(\theta)}{\partial N_{\theta}}-2\mathcal{F}(\theta)\frac{\partial \mathcal{F}(\theta)}{\partial N_{\theta}}\right)	\cr
&=-\left( \frac{\partial a_{\theta}}{\partial N_{\theta}}\bigg|p-\frac{P_{\theta}}{N_{\theta}}\bigg|^2+a_{\theta}\frac{2P_{\theta}}{N_{\theta}^2}\left(p-\frac{P_{\theta}}{N_{\theta}}\right)+\frac{\partial c_{\theta}}{\partial N_{\theta}} \right)^2
(1-2\mathcal{F}(\theta))(\mathcal{F}(\theta)-\mathcal{F}(\theta)^2).
\end{align*}
Note that we have used Lemma \ref{derivatives F} (1) in the last line. Therefore, in view of the  definitions of $P_{i,j}^{\mathcal{F}}(\theta)$ and $R_{i,j}^{\mathcal{F}}(\theta)$, we can represent $\frac{\partial^2 \mathcal{F}(\theta)}{\partial N_{\theta}^2}$ as
\[
\frac{\partial^2 \mathcal{F}(\theta)}{\partial N_{\theta}^2}=\frac{P_{i,j}^{\mathcal{F}}(\theta)}{R_{i,j}^{\mathcal{F}}(\theta)}(\mathcal{F}(\theta)-\mathcal{F}(\theta)^2).
\]
This completes the proof of the $(1,1)$ elements. Others parts can be proved in a similar manner.
\end{proof}
%
%
%
%
%
\subsection{Linearization of the collision frequency} We now turn to the linearization of the collision frequency.
\begin{theorem}\label{LinearizeCol} Assume $ c_{\theta} > -\ln3$. Then collision frequency  is linearized  around $m$ as follows:
\begin{align*}
\frac{1}{\tau}
=1+\sum_{i=1}^5\left(\int^1_0\mathcal{C}_i(\theta)d\theta\right)\langle f,e_i\rangle_{L^2_p},
\end{align*}
where $\mathcal{C}_i(\theta)~(i=1,\cdots,5)$ are given by
\begin{align*}
\mathcal{C}_1(\theta)&= \sqrt{k}X(\theta)\cr
&+\frac{2}{3}\sqrt{k}Y(\theta) a_{\theta}^{-\frac{1}{2}}
\left(\int_{\mathbb{R}^3}\frac{-e^{|p|^2+c_{\theta}}}{(e^{|p|^2+c_{\theta}}+1)^2}dp\frac{1}{\beta'(c_{\theta})}\frac{(E_{\theta}-\frac{8}{5}\frac{P_{\theta}^2}{N_{\theta}}-\frac{9N_0}{10a_0k}N_{\theta})}{(E_{\theta}-\frac{P_{\theta}^2}{N_{\theta}})^{\frac{8}{5}}}-a_{\theta}^{\frac{3}{2}}\right)N_{\theta}^{-1}  ,	\cr
\mathcal{C}_i(\theta)&=\frac{2}{3}\left(\frac{N_0}{2a_0}\right)^{\frac{1}{2}}Y(\theta) a_{\theta}^{-\frac{1}{2}}
\left(\int_{\mathbb{R}^3}\frac{-e^{|p|^2+c_{\theta}}}{(e^{|p|^2+c_{\theta}}+1)^2}dp\frac{1}{\beta'(c_{\theta})} \frac{\frac{6}{5}P_{\theta,i-1}}{(E_{\theta}-\frac{P_{\theta}^2}{N_{\theta}})^{\frac{8}{5}}}\right)N_{\theta}^{-1},~\mbox{(for $i=2,3,4$),}  \cr
\mathcal{C}_5(\theta)&=\frac{2}{3}\left(\frac{2}{5}\frac{a_0k}{E_0k-\frac{9N_0^2}{10a_0}}\right)^{-\frac{1}{2}}Y(\theta) a_{\theta}^{-\frac{1}{2}}
\left(\int_{\mathbb{R}^3}\frac{-e^{|p|^2+c_{\theta}}}{(e^{|p|^2+c_{\theta}}+1)^2}dp\frac{1}{\beta'(c_{\theta})} \frac{-\frac{3}{5}N_{\theta}}{(E_{\theta}-\frac{P_{\theta}^2}{N_{\theta}})^{\frac{8}{5}}}\right)N_{\theta}^{-1} ,
\end{align*}
with
\begin{align*} X(\theta)&=P'(N_{\theta})\left(C_1a_{\theta}^n+C_2a_{\theta}^m+C_3\right),	\cr
Y(\theta)&=P(N_{\theta})\left(nC_1a_{\theta}^{n-1}+mC_2a_{\theta}^{m-1}\right).
\end{align*}
\end{theorem}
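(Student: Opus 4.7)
The plan is to apply Taylor's theorem at first order to the transitional collision frequency along the one-parameter deformation $\theta\mapsto(N_\theta,P_\theta,E_\theta)$ already used in the proof of Theorem \ref{Linearize}. Setting
\[
g(\theta)=P(N_\theta)\bigl(C_1 a_\theta^n+C_2 a_\theta^m+C_3\bigr)+C_4,
\]
so that $g(1)=1/\tau$ and $g(0)=1$ under the normalization of the background relaxation time, I obtain
\[
\frac{1}{\tau}\;=\;1\;+\;\int_0^1 g'(\theta)\,d\theta,
\]
so the task reduces to rewriting $g'(\theta)$ in the orthonormal basis $\{e_i\}_{1\le i\le 5}$.

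A direct chain rule, together with $dN_\theta/d\theta=N-N_0$, $dP_\theta/d\theta=P$, $dE_\theta/d\theta=E-E_0$, gives
\[
g'(\theta)=X(\theta)(N-N_0)+Y(\theta)\,\frac{d a_\theta}{d\theta},
\]
with $X,Y$ as in the statement, and
\[
\frac{d a_\theta}{d\theta}=\frac{\partial a_\theta}{\partial N_\theta}(N-N_0)+\nabla_{P_\theta}a_\theta\cdot P+\frac{\partial a_\theta}{\partial E_\theta}(E-E_0).
\]
The partial derivatives appearing here are the $\theta$-dependent generalizations of Lemma \ref{diffa}: starting from $a_\theta=\bigl(\int_{\mathbb{R}^3}(e^{|p|^2+c_\theta}+1)^{-1}dp\bigr)^{2/3}N_\theta^{-2/3}$ and using the implicit representation $c_\theta=\beta^{-1}(B(N_\theta,P_\theta,E_\theta))$, valid by Proposition \ref{betathm} under $c_\theta>-\ln 3$, direct differentiation produces explicit formulas in $\beta'(c_\theta)$ and the moments $E_\theta-(8/5)P_\theta^2/N_\theta$, $(6/5)P_\theta$, $-(3/5)N_\theta$ that appear in the statement.

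The final step converts the macroscopic differences back to $\langle f,e_i\rangle_{L^2_p}$. From the explicit formulas for $e_1$ and $e_i$ $(i=2,3,4)$ derived inside Theorem \ref{Linearize} one has
\[
N-N_0=\sqrt{k}\,\langle f,e_1\rangle_{L^2_p},\qquad P_i=\Bigl(\tfrac{N_0}{2a_0}\Bigr)^{\!1/2}\langle f,e_{i+1}\rangle_{L^2_p}\quad(i=1,2,3),
\]
while the representation (\ref{e5}) of $e_5$ inverts to
\[
E-E_0=\Bigl(\tfrac{2}{5}\,\tfrac{a_0 k}{E_0 k-9N_0^2/(10a_0)}\Bigr)^{\!-1/2}\langle f,e_5\rangle_{L^2_p}+\frac{3N_0}{2a_0\sqrt{k}}\langle f,e_1\rangle_{L^2_p}.
\]
Substituting these into $\int_0^1 g'(\theta)\,d\theta$ and collecting the coefficient of each $\langle f,e_i\rangle_{L^2_p}$ produces the asserted $\mathcal{C}_i(\theta)$. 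The principal bookkeeping obstacle lies in $\mathcal{C}_1(\theta)$, where the direct contribution from $N-N_0$ must be combined with the $e_1$-component of $E-E_0$ (entering through $\partial c_\theta/\partial E_\theta$) in order to generate the extra $-\tfrac{9N_0}{10a_0 k}N_\theta$ term inside the bracket; the remaining coefficients $\mathcal{C}_2,\ldots,\mathcal{C}_5$ follow by the same recipe but with only a single orthonormal mode contributing each.
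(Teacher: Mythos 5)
Your proposal is correct and follows essentially the same route as the paper: Taylor expansion of the transitional collision frequency $g(\theta)$ with $g(0)=1$ after normalizing $\tau_0$, chain rule through the $\theta$-dependent versions of Lemmas \ref{diffc}--\ref{diffa}, and conversion of $N-N_0$, $P$, $E-E_0$ into $\langle f,e_i\rangle_{L^2_p}$ via the explicit formulas for $e_1$, $e_i$ $(i=2,3,4)$ and (\ref{e5}). In fact you spell out the final bookkeeping (in particular the combination of the $\partial c_\theta/\partial N_\theta$ term with the $e_1$-component of $E-E_0$ that produces $-\tfrac{9N_0}{10a_0k}N_\theta$) which the paper merely calls a ``tedious calculation'' and omits.
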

\begin{proof}
We recall
\begin{align*}
N_{\theta}=\theta N+(1-\theta)N_0,	\quad P_{\theta}&=\theta P,	\quad E_{\theta}=\theta E+(1-\theta)E_0,
\end{align*}
and define the transitional collision frequency as
\begin{align*}
g(\theta) = P(N_{\theta})\left(C_1a_{\theta}^n+C_2a_{\theta}^m+C_3\right)+C_4.
\end{align*}
Then we note that
\[
g(1)=\frac{1}{\tau}, \quad g(0)=\frac{1}{\tau_0}\equiv P(N_0)\left(C_1a_0^n+C_2a_0^m+C_3\right)+C_4.
\]
Without loss of generality, we set $\tau_0$ to be $1$ for simplicity. Applying Taylor expansion, we derive
\begin{align}\label{linig}
\begin{split}
g(1)&=g(0)+\int_{0}^{1}g'(\theta)d\theta	\cr
&=g(0)+\int_0^1(N-N_0,P,E-E_0)\cdot \left(\frac{\partial g(\theta) }{\partial N_{\theta}},\frac{\partial g(\theta) }{\partial P_{\theta}},\frac{\partial g(\theta) }{\partial E_{\theta}}\right) d\theta.
\end{split}
\end{align}
\noindent(i)
$g'(\theta)$: Explicit calculation using chain rule gives
\begin{align*}
g'(\theta)&=\left(\int_{\mathbb{R}^3}\sqrt{m-m^2}fdp\right)X(\theta)	\cr
&+\left(\int_{\mathbb{R}^3}\sqrt{m-m^2}fdp\right)Y(\theta) \frac{2}{3} a_{\theta}^{-\frac{1}{2}}
\left(\frac{\int_{\mathbb{R}^3}\frac{-e^{|p|^2+c_{\theta}}}{(e^{|p|^2+c_{\theta}}+1)^2}\frac{\partial c_{\theta}}{\partial N_{\theta}}dp-a_{\theta}^{\frac{3}{2}}}{N_{\theta}}\right) \cr
&+\left(\int_{\mathbb{R}^3}\sqrt{m-m^2}fpdp\right)Y(\theta) \frac{2}{3} a_{\theta}^{-\frac{1}{2}}
\left(\frac{\int_{\mathbb{R}^3}\frac{-e^{|p|^2+c_{\theta}}}{(e^{|p|^2+c_{\theta}}+1)^2}\frac{\partial c_{\theta}}{\partial P_{\theta}}dp}{N_{\theta}}\right) \cr
&+ \left(\int_{\mathbb{R}^3}\sqrt{m-m^2}f|p|^2dp\right)Y(\theta) \frac{2}{3} a_{\theta}^{-\frac{1}{2}}
\left(\frac{\int_{\mathbb{R}^3}\frac{-e^{|p|^2+c_{\theta}}}{(e^{|p|^2+c_{\theta}}+1)^2}\frac{\partial c_{\theta}}{\partial E_{\theta}}dp}{N_{\theta}}\right).
\end{align*}
Then, a tedious calculation using Lemma \ref{diffc} and Lemma \ref{diffa} together with (\ref{II}), (\ref{e1}) and (\ref{e5}) yields the desired result. We omit the details.
\end{proof}

\subsection{Linearized Quantum BGK model for fermions}
We employ the notation  $P_{i,j}^{\mathcal{F}}$ and $R_{i,j}^{\mathcal{F}}$ generically from now on, since, once the property $(\mathcal{H}_{\mathcal{F}}1)$ and $(\mathcal{H}_{\mathcal{F}}2)$ are satisfied, the exact form are not relevant. We also introduce the following three notations for notational simplicity:
\begin{align*}
\mathcal{Q}_{i,j}^{\mathcal{F}}(\theta)=\frac{P_{i,j}^{\mathcal{F}}(\theta)}{R_{i,j}^{\mathcal{F}}(\theta)},
\end{align*}
and
\begin{align*}
\mathcal{B}_{i,j}^{\mathcal{F}}=\int_0^1\mathcal{Q}_{i,j}^{\mathcal{F}}(\theta)\frac{(\mathcal{F}(\theta)-\mathcal{F}(\theta)^2)}{\sqrt{m-m^2}}(1-\theta)d\theta, \quad C_i^{\tau}=\int_0^1 C_i(\theta) d\theta.
\end{align*}
Now, we turn back to (\ref{turn back to}) with all these computations to get
\begin{align*}
\mathcal{F}(F)=m+\sqrt{m-m^2}Pf+\sqrt{m-m^2}\sum_{1\leq i,j\leq 5}\mathcal{B}_{i,j}^{\mathcal{F}}\langle f,e_i\rangle_{L^2_p} \langle f,e_j \rangle_{L^2_p}.
\end{align*}
\[
\frac{1}{\tau}=1+\sum_{i=1}^5\mathcal{C}^{\tau}_i\langle f,e_i\rangle_{L^2_p}.
\]
%
%
%
%
We summarize all the argument of this section so far in the following proposition.
\begin{proposition}
The relaxation collision operator is linearized around the global Fermi-Dirac distribution $m$ as follows:
\begin{align*}
&\frac{1}{\sqrt{m-m^2}}\frac{1}{\tau}\left\{\mathcal{F}(F)-F\right\}\cr
&\qquad=\left\{1+\sum_{i=1}^5\mathcal{C}^{\tau}_i\langle f,e_i\rangle_{L^2_p}\right\}
\left\{(Pf-f)+\sum_{1\leq i,j\leq 5}\mathcal{B}_{i,j}^{\mathcal{F}}\langle f,e_i\rangle_{L^2_p} \langle f,e_j\rangle_{L^2_p}\right\} .
\end{align*}
\end{proposition}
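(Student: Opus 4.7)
The plan is to assemble the linearizations already proven in this section: the expansion of $\mathcal{F}(F)$ from Theorem \ref{Linearize} (together with the compact representation of its second-order term from Lemma \ref{diff2}) and the expansion of $1/\tau$ from Theorem \ref{LinearizeCol}. The proposition is essentially an algebraic bookkeeping statement that recasts the relaxation operator $\frac{1}{\tau}(\mathcal{F}(F)-F)$ in the form dictated by the perturbation ansatz $F = m + \sqrt{m-m^2}\,f$.

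First I would rewrite the Taylor expansion of $\mathcal{F}(F)$ in the compact form
\[
\mathcal{F}(F) = m + \sqrt{m-m^2}\,Pf + \sqrt{m-m^2}\sum_{1\leq i,j\leq 5}\mathcal{B}^{\mathcal{F}}_{i,j}\langle f,e_i\rangle_{L^2_p}\langle f,e_j\rangle_{L^2_p},
\]
where the $\sqrt{m-m^2}$ has been pulled out of the second-order term by invoking Lemma \ref{diff2}: each entry of $D^2_{(N_\theta,P_\theta,E_\theta)}\mathcal{F}(\theta)$ factors as $\mathcal{Q}^{\mathcal{F}}_{i,j}(\theta)(\mathcal{F}(\theta)-\mathcal{F}(\theta)^2)$, and the ratio $(\mathcal{F}(\theta)-\mathcal{F}(\theta)^2)/\sqrt{m-m^2}$, together with the $(1-\theta)$-factor from Taylor's theorem, is absorbed into the definition of $\mathcal{B}^{\mathcal{F}}_{i,j}$. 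Substituting $F = m + \sqrt{m-m^2}\,f$ into $\mathcal{F}(F)-F$ then cancels the constant $m$ and yields
\[
\mathcal{F}(F) - F = \sqrt{m-m^2}\,\Big\{(Pf - f) + \sum_{1\leq i,j\leq 5}\mathcal{B}^{\mathcal{F}}_{i,j}\langle f,e_i\rangle_{L^2_p}\langle f,e_j\rangle_{L^2_p}\Big\}.
\]
Dividing by $\sqrt{m-m^2}$ produces exactly the second brace on the right-hand side of the proposition, and multiplying by the expansion $1/\tau = 1 + \sum_i \mathcal{C}^{\tau}_i\langle f,e_i\rangle_{L^2_p}$ supplied by Theorem \ref{LinearizeCol} completes the claimed identity.

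There is no real obstacle here since all the substantive work — differentiating the equilibrium coefficients $a_\theta, c_\theta$ with respect to the macroscopic fields in Lemmas \ref{diffc}--\ref{diffa}, identifying the first-order part with the macroscopic projection via the representations (\ref{II}) and (\ref{I+III}), and factorizing the nonlinear remainder so that a common $\sqrt{m-m^2}$ can be extracted — has already been carried out in the preceding subsections. The only point worth flagging is that the cancellation of the $\sqrt{m-m^2}$ in the denominator is not merely formal: it relies essentially on the factorization $(\mathcal{F}(\theta)-\mathcal{F}(\theta)^2)$ provided by Lemma \ref{diff2}, which in turn is the whole reason for choosing the non-standard weight $\sqrt{m-m^2}$ instead of the conventional $\sqrt{m}$, as emphasized in the introduction.
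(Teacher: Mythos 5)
Your proposal is correct and takes essentially the same route as the paper: the proposition there is stated precisely as a summary of Theorem \ref{Linearize} (with the second-order term rewritten via the factorization $\mathcal{F}(\theta)-\mathcal{F}(\theta)^2$ from Lemma \ref{diff2}, absorbed into $\mathcal{B}^{\mathcal{F}}_{i,j}$) and Theorem \ref{LinearizeCol}, combined with the substitution $F=m+\sqrt{m-m^2}\,f$ and division by $\sqrt{m-m^2}$. Your bookkeeping, including the cancellation of $m$ and the extraction of the common factor $\sqrt{m-m^2}$, is exactly the paper's argument.
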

%
%
%
%
 We now substitute
\[
F=m+\sqrt{m-m^2}f,
\]
into (\ref{QBGK}) to obtain the perturbed Fermi-Dirac model:
\begin{align}\label{f}
\begin{split}
\partial_tf+p \cdot \nabla_xf &= Lf + \Gamma (f), \cr
f(x,p,0)&=f_0(x,p), 
\end{split}
\end{align}
where $f_0(x,p)=\frac{F_0(x,p)-m}{\sqrt{m-m^2}}$. The linearized relaxation operator $L$ and nonlinear perturbation term $\Gamma$ is defined as
\begin{align*}
Lf=Pf-f,
\end{align*}
and
\[
\Gamma(f)=\sum_{i=1}^3\Gamma_i(f),
\]
with
\begin{align}\label{Gamma}
\begin{split}
\Gamma_1(f)&=\sum_{1\leq i,j\leq 5}\mathcal{B}_{i,j}^{\mathcal{F}}\langle f,e_i\rangle_{L^2_p} \langle f,e_j \rangle_{L^2_p},\cr
\Gamma_2(f)&=\left\{\sum_{i=1}^5\mathcal{C}^{\tau}_i\langle f,e_i\rangle_{L^2_p}\right\}(Pf-f),\cr
\Gamma_3(f)&=\sum_{1\leq i,j,k\leq 5}\mathcal{B}_{i,j}^{\mathcal{F}}\mathcal{C}^{\tau}_k\langle f,e_i\rangle_{L^2_p}
\langle f,e_j\rangle_{L^2_p} \langle f,e_k\rangle_{L^2_p}.
\end{split}
\end{align}
Then the conservation laws (\ref{Conservation}) for $F$ now take the following form:
\begin{lemma}\label{consf} $f$ satisfies
\begin{align}\label{conservf}
\begin{split}
\int_{\mathbb{T}_x^3\times \mathbb{R}_p^3}f(x,p,t)\sqrt{m-m^2}dxdp&=\int_{\mathbb{T}_x^3\times \mathbb{R}_p^3}f_0(x,p)\sqrt{m-m^2}dxdp,	\cr
\int_{\mathbb{T}_x^3\times \mathbb{R}_p^3}f(x,p,t)p\sqrt{m-m^2}dxdp&=\int_{\mathbb{T}_x^3\times \mathbb{R}_p^3}f_0(x,p)p\sqrt{m-m^2}dxdp,	\cr
\int_{\mathbb{T}_x^3\times \mathbb{R}_p^3}f(x,p,t)|p|^2\sqrt{m-m^2}dxdp&=\int_{\mathbb{T}_x^3\times \mathbb{R}_p^3}f_0(x,p)|p|^2\sqrt{m-m^2}dxdp.
\end{split}
\end{align}
\end{lemma}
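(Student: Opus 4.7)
The strategy is a direct substitution argument: simply rewrite the already-established conservation laws (\ref{Conservation}) for $F$ in terms of the perturbation $f$ via the decomposition $F = m + \sqrt{m-m^2}\,f$. The point is that the global Fermi-Dirac distribution $m(p)$ defined in (\ref{globalmax}) has parameters $a_0, c_0$ that are \emph{constants} (fixed by the initial data through (\ref{by})), so $m$ depends only on the momentum variable $p$, not on $x$ or $t$.

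First I would substitute $F(x,p,t) = m(p) + \sqrt{m(p) - m(p)^2}\, f(x,p,t)$ into each of the three identities in (\ref{Conservation}), testing against the collision invariants $1$, $p$, and $|p|^2$ respectively. This produces, on each side of each identity, a term involving only $m$ and a term involving $f$ weighted by $\sqrt{m-m^2}$.

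Next I would observe that for every test function $\phi \in \{1, p, |p|^2\}$, the integral $\int_{\mathbb{T}^3 \times \mathbb{R}^3} m(p)\, \phi(p)\, dx dp$ is independent of $t$ and independent of the solution, so the same quantity appears on both sides of each conservation identity and cancels. What remains is exactly (\ref{conservf}).

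I do not anticipate any genuine obstacle: the lemma is a bookkeeping translation of the conservation laws of $F$ into the perturbation variable. One minor remark worth making is that evenness of $m(p)$ in $p$ forces $\int m\, p\, dx dp = 0$, which is consistent with $P_0 = 0$ in (\ref{NPE0}); however, this evenness is not strictly needed for the argument, since only the time-independence of the corresponding moment of $m$ enters.
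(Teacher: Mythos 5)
Your argument is correct and is exactly the reasoning the paper has in mind: the lemma is stated without proof as an immediate consequence of substituting $F=m+\sqrt{m-m^2}\,f$ into (\ref{Conservation}) and cancelling the $t$- and $x$-independent moments of $m(p)$. Nothing further is needed.
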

The following dissipative property of $L$ now follows from standard argument:
\begin{lemma}\label{coercivity}
Linearized relaxation operator $L$ satisfies the following coercivity property.
\begin{align*}
\langle Lf,f\rangle_{L_{x,p}^2}=-||(I-P)f||_{L_{x,p}^2}^2.
\end{align*}
\end{lemma}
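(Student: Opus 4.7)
The plan is to observe that $L = P - I$, so $\langle Lf, f\rangle_{L^2_{x,p}} = -\langle (I-P)f, f\rangle_{L^2_{x,p}}$, and then identify $\langle (I-P)f, f\rangle_{L^2_{x,p}}$ with $\|(I-P)f\|^2_{L^2_{x,p}}$ by exploiting the fact that $P$ is the orthogonal projection onto the span of the orthonormal set $\{e_1,\ldots,e_5\}$.

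First I would note that, because $\{e_i\}_{1\le i\le 5}$ is orthonormal in $L^2_p$ by the construction in Definition \ref{Pfdef}, the operator $Pf(x,p) = \sum_{i=1}^5 \langle f(x,\cdot), e_i\rangle_{L^2_p}\, e_i(p)$ satisfies $P^2 = P$ and $P^* = P$ with respect to the $L^2_p$ inner product for each fixed $x$. Integrating in $x$, these identities lift to the $L^2_{x,p}$ inner product as well, so $P$ (and consequently $I-P$) is an orthogonal projection on $L^2_{x,p}$.

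Next, since $Lf = Pf - f = -(I-P)f$, I compute
\begin{align*}
\langle Lf, f\rangle_{L^2_{x,p}} = -\langle (I-P)f, f\rangle_{L^2_{x,p}} = -\langle (I-P)^2 f, f\rangle_{L^2_{x,p}} = -\langle (I-P)f, (I-P)f\rangle_{L^2_{x,p}},
\end{align*}
where in the middle step I used $(I-P)^2 = I-P$ and in the last step I used the self-adjointness of $I-P$. The right-hand side equals $-\|(I-P)f\|^2_{L^2_{x,p}}$, which is the desired identity.

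There is no real obstacle here: the only subtlety is the verification that the $\{e_i\}$ defined in (\ref{orthobasis}) are genuinely orthonormal in $L^2_p$, which is built into their construction (the denominators are precisely the $L^2_p$-norms, and $e_5$ has been Gram--Schmidt-corrected against $e_1$ so that $\langle e_1, e_5\rangle_{L^2_p} = 0$; orthogonality against $e_2, e_3, e_4$ is automatic by parity in $p$). Once this is noted, the coercivity identity is a direct consequence of the projection structure.
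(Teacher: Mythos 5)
Your proof is correct and follows essentially the same route as the paper: both arguments reduce the identity to the fact that $P$, built from the orthonormal set $\{e_i\}$ of (\ref{orthobasis}), is a self-adjoint idempotent, so that $\langle Lf,f\rangle_{L^2_{x,p}}=-\langle (I-P)f,(I-P)f\rangle_{L^2_{x,p}}$. Your added remark on why the $e_i$ are orthonormal (normalization, Gram--Schmidt correction of $e_5$ against $e_1$, and parity against $e_2,e_3,e_4$) is accurate and matches what the paper compresses into ``by construction.''
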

\begin{proof}
Since $e_i$ $(i=1,\cdots,5)$ forms an orthonormal set by construction, $P$ is a orthogonal projection: $P^2=P$ and self-adjoint. Hence we have
\begin{align*}
\langle Pf,(I-P)f \rangle_{L_{p}^2}&=\langle Pf,f \rangle_{L_{p}^2}-\langle Pf,Pf \rangle_{L_{p}^2}	\cr
&=\langle Pf,f \rangle_{L_{p}^2}-\langle P^2f,f \rangle_{L_{p}^2}	\cr
&=0,
\end{align*}
which yields
\begin{align*}
\langle Lf,f\rangle_{L_{p}^2}&=\langle Pf-f,f\rangle_{L_{p}^2} =\langle Pf-f,-Pf+f\rangle_{L_{p}^2}
=-||(I-P)f||_{L_{p}^2}^2.
\end{align*}
\end{proof}

\section{Estimates on the nonlinear part}
In this section, we estimate the nonlinear part $\Gamma(f)$, which is crucial to close the energy estimate. For this, we first estimate $N$, $P$, $E$ and $a$ and $c$, when  $\mathcal{E}(t)$ is sufficiently small.
\subsection{Estimates on the macroscopic field} We start with the estimates of the macroscopic fields $N$, $P$ and $E$.
\begin{lemma}\label{esN} Suppose $\mathcal{E}(t)$ is sufficiently small, then we have the following estimates.
\begin{align*}
&(1) \ |N(x,t)-N_0|\leq C\sqrt{\mathcal{E}(t)},	\cr
&(2) \ |P(x,t)|\leq C\sqrt{\mathcal{E}(t)},	\cr
&(3) \ |E(x,t)-E_0|\leq C\sqrt{\mathcal{E}(t)},	\cr
&(4) \ \bigg|\,B\big(N,P,E\big)-\frac{N_0}{E_0^{3/5}}\bigg| \leq C\sqrt{\mathcal{E}(t)},
\end{align*}
for some constant $C>0$.
\end{lemma}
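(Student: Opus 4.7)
The plan is to exploit the decomposition $F = m + \sqrt{m-m^2}\,f$ to represent each of $N(x,t)-N_0$, $P(x,t)$, $E(x,t)-E_0$ as a $p$-integral of $\sqrt{m-m^2}\,f$ weighted by $1$, $p$, or $|p|^2$, and then to bound these by $\mathcal{E}(t)$ through Cauchy--Schwarz in $p$ combined with a Sobolev embedding in $x$.

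First I would verify that the calibration of $(a_0,c_0)$ by (\ref{by}) is precisely what forces $\int_{\mathbb{R}^3} m\, dp = N_0$, $\int_{\mathbb{R}^3} mp\, dp = 0$, and $\int_{\mathbb{R}^3} m|p|^2\, dp = E_0$ (this is a direct change-of-variable $\sqrt{a_0}\,p \mapsto p$, mirroring the derivation (\ref{N})--(\ref{this})). Subtracting, I then obtain
\[
N(x,t)-N_0 = \int_{\mathbb{R}^3}\sqrt{m-m^2}\,f\, dp, \qquad P(x,t) = \int_{\mathbb{R}^3}\sqrt{m-m^2}\,fp\, dp,
\]
and analogously for $E(x,t)-E_0$. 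Cauchy--Schwarz in $p$ controls each by $\|f(x,\cdot,t)\|_{L^2_p}$ times a finite moment $\int_{\mathbb{R}^3}(m-m^2)|p|^{2k}\,dp$, $k=0,1,2$, the finiteness of which is immediate from the exponential decay of $m$.

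The remaining step for (1)--(3) is to pass from the $x$-pointwise norm $\|f(x,\cdot,t)\|_{L^2_p}$ to the full energy $\mathcal{E}(t)$. Since $N\geq 3$, the Sobolev embedding $H^2(\mathbb{T}^3_x)\hookrightarrow L^\infty(\mathbb{T}^3_x)$ gives
\[
\sup_{x\in\mathbb{T}^3}\|f(x,\cdot,t)\|_{L^2_p}^2 \leq C\sum_{|\alpha|\leq 2}\|\partial_x^\alpha f(t)\|_{L^2_{x,p}}^2 \leq C\mathcal{E}(t),
\]
which finishes (1), (2), (3).

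For (4), I would use that the map $(N,P,E)\mapsto B(N,P,E) = N/(E-P^2/N)^{3/5}$ is smooth in a neighborhood of $(N_0,0,E_0)$ because $N_0>0$ and $E_0 - 0 = E_0 > 0$. By the already-proved parts (1)--(3), the smallness of $\mathcal{E}(t)$ keeps $(N,P,E)$ well inside such a neighborhood (in particular $N$ and $E - P^2/N$ stay bounded away from zero), so the mean value theorem yields
\[
\Big|B(N,P,E)-\frac{N_0}{E_0^{3/5}}\Big|\leq C\big(|N-N_0|+|P|+|E-E_0|\big)\leq C\sqrt{\mathcal{E}(t)}.
\]
No step is genuinely hard; the one item deserving care is verifying that $\mathcal{E}(t)$ small enough indeed keeps $E - P^2/N$ and $N$ bounded below, so that the composition defining $B$ remains $C^1$ uniformly.
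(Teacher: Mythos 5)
Your proposal is correct and follows essentially the same route as the paper: Cauchy--Schwarz in $p$ against the Gaussian-type weight $\sqrt{m-m^2}$ for (1)--(3) (you merely make explicit the Sobolev embedding $H^2(\mathbb{T}^3_x)\hookrightarrow L^\infty_x$ that the paper leaves implicit in passing from $\|f(x,\cdot,t)\|_{L^2_p}$ to $\sqrt{\mathcal{E}(t)}$), and a mean value theorem argument for (4) using that $N$ and $E-P^2/N$ stay bounded away from zero, which is just a streamlined version of the paper's explicit two-sided estimates.
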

\begin{proof}
(1), (2) and (3) follows from a direct application of H\"{o}lder inequality. For example,
\begin{align*}
|E-E_0|&=\bigg|\int_{\mathbb{R}^3}|p|^2\sqrt{m-m^2}fdp\bigg| \leq \left(\int_{\mathbb{R}^3}f^2dp\right)^{\frac{1}{2}}\left(\int_{\mathbb{R}^3}|p|^4(m-m^2)dp\right)^{\frac{1}{2}} \leq C\sqrt{\mathcal{E}(t)}.
\end{align*}
We now turn to (4). Using the above estimate $(1)-(3)$, we get
\begin{align*}
E-\frac{P^2}{N} \geq E_0-C\sqrt{\mathcal{E}(t)}-\frac{C\mathcal{E}(t)}{N_0-C\sqrt{\mathcal{E}(t)}} \geq	E_0-C\sqrt{\mathcal{E}(t)},
\end{align*}
so that
\begin{align*}
\frac{N}{\left(E-\frac{P^2}{N}\right)^{\frac{3}{5}}}-\frac{N_0}{E_0^{\frac{3}{5}}} 
&\leq  \frac{N_0+C\sqrt{\mathcal{E}(t)}}{\left(E_0-C\sqrt{\mathcal{E}(t)}\right)^{\frac{3}{5}}}-\frac{N_0}{E_0^{\frac{3}{5}}}.
\end{align*}
Now, by mean value theorem, we can find  $E_0-C\sqrt{\mathcal{E}(t)}\leq k\leq E_0$ such that
\begin{align*}
\left(E_0-C\sqrt{\mathcal{E}(t)}\right)^{\frac{3}{5}} = E_0^{\frac{3}{5}}-\left(C\sqrt{\mathcal{E}(t)}\right)\frac{3}{5}k^{-\frac{2}{5}}\geq E_0^{\frac{3}{5}}-\frac{3}{5}C\sqrt{\mathcal{E}(t)}\left(E_0-C\sqrt{\mathcal{E}(t)}\right)^{-\frac{2}{5}}.
\end{align*}
Hence we have
\begin{align*}
\frac{N}{\left(E-\frac{P^2}{N}\right)^{\frac{3}{5}}}-\frac{N_0}{E_0^{\frac{3}{5}}}&\leq  \frac{N_0+C\sqrt{\mathcal{E}(t)}}{E_0^{\frac{3}{5}}-C\sqrt{\mathcal{E}(t)}}-\frac{N_0}{E_0^{\frac{3}{5}}}\cr
&\leq  \frac{C\big(E_0^{\frac{3}{5}}+N_0\big)\sqrt{\mathcal{E}(t)}}{\left(E_0^{\frac{3}{5}}-C\sqrt{\mathcal{E}(t)}\right)E_0^{\frac{3}{5}}}\cr
&\leq C\sqrt{\mathcal{E}(t)}.
\end{align*}
Lower bound can be obtained in a similar manner:
\begin{align*}
\frac{N}{\left(E-\frac{P^2}{N}\right)^{\frac{3}{5}}}-\frac{N_0}{E_0^{\frac{3}{5}}} &\geq  \frac{N_0-C\sqrt{\mathcal{E}(t)}}{\left(E_0+C\sqrt{\mathcal{E}(t)}\right)^{\frac{3}{5}}}-\frac{N_0}{E_0^{\frac{3}{5}}}	\cr
&\geq  \frac{N_0-C\sqrt{\mathcal{E}(t)}}{E_0^{\frac{3}{5}}+C\sqrt{\mathcal{E}(t)}}-\frac{N_0}{E_0^{\frac{3}{5}}}	\cr
&\geq  \frac{-C\sqrt{\mathcal{E}(t)}E_0^{\frac{3}{5}}-N_0C\sqrt{\mathcal{E}(t)}}{\left(E_0^{\frac{3}{5}}+C\sqrt{\mathcal{E}(t)}\right)E_0^{\frac{3}{5}}}	\cr
&\geq -C\sqrt{\mathcal{E}(t)}.
\end{align*}
\end{proof}
\begin{lemma}\label{esdN} Suppose $\mathcal{E}(t)$ is sufficiently small and $|\alpha|\geq1$, then we have
\begin{align*}
&(1) \ |\partial^{\alpha}N(x,t)| \leq C\sqrt{\mathcal{E}(t)},	\cr
&(2) \ |\partial^{\alpha}P(x,t)| \leq C\sqrt{\mathcal{E}(t)},	\cr
&(3) \ |\partial^{\alpha}E(x,t)| \leq C\sqrt{\mathcal{E}(t)},	\cr
&(4) \ \bigg|\partial^{\alpha}\left(\frac{P(x,t)}{N(x,t)}\right)\bigg| \leq C_{\alpha} \sqrt{\mathcal{E}(t)},	\cr
&(5) \ \left|\partial^{\alpha}B(N,P,E)\right|  \leq C_{\alpha}\sqrt{\mathcal{E}(t)},
\end{align*}
for some $C>0$ and $C_{\alpha}>0$.
\end{lemma}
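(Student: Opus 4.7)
The plan is to handle (1)--(3) by the same Cauchy--Schwarz argument used in Lemma \ref{esN}, and then to reduce (4)--(5) to those bounds via Leibniz and Fa\`a di Bruno, using Lemma \ref{esN} to secure the lower bounds on $N$ and on $E-P^2/N$ that keep the denominators well behaved.

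For (1)--(3), since $N_0$, $E_0$ are constants and $P_0=0$, for $|\alpha|\geq 1$ we have
\begin{align*}
\partial^{\alpha}N(x,t)=\int_{\mathbb{R}^3}\partial^{\alpha}f(x,p,t)\sqrt{m-m^2}\,dp,
\end{align*}
and analogous identities for $\partial^{\alpha}P$ and $\partial^{\alpha}E$ carrying the extra weights $p$ and $|p|^2$. Cauchy--Schwarz in $p$, combined with the integrability of $(1+|p|^2+|p|^4)(m-m^2)$ and the Sobolev embedding $H^2_x\hookrightarrow L^\infty_x$ on $\mathbb{T}^3$ applied pointwise to $\|\partial^{\alpha}f(x,\cdot,t)\|_{L^2_p}$ (valid since $N\geq 3$), yields the pointwise bound by $C\sqrt{\mathcal{E}(t)}$.

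For (4), Lemma \ref{esN}(1) gives $N(x,t)\geq N_0/2$ when $\mathcal{E}(t)$ is sufficiently small, so $1/N$ is smooth. Expanding
\begin{align*}
\partial^{\alpha}\Big(\frac{P}{N}\Big)=\sum_{\beta\leq\alpha}\binom{\alpha}{\beta}\partial^{\beta}P\cdot \partial^{\alpha-\beta}\Big(\frac{1}{N}\Big),
\end{align*}
and writing each $\partial^{\gamma}(1/N)$ via Fa\`a di Bruno as a finite sum of monomials $N^{-k}\prod_i\partial^{\gamma_i}N$ with $\sum|\gamma_i|=|\gamma|$ and each $|\gamma_i|\geq 1$, one checks that every resulting term carries at least one factor of size $O(\sqrt{\mathcal{E}(t)})$: either $\partial^{\beta}P$ is small (by Lemma \ref{esN}(2) when $\beta=0$, since $P_0=0$, and by item (2) of the present lemma otherwise), or some $\partial^{\gamma_i}N$ is small by item (1). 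For (5), set $D=E-P^2/N$; Lemma \ref{esN}(3) together with $P_0=0$ yields $D\geq E_0/2$ for small $\mathcal{E}(t)$, so $B=N\cdot D^{-3/5}$ is smooth. Applying Leibniz and the chain rule, each derivative of $D$ of order $\geq 1$ is controlled by items (2), (3), (4), and in every term of $\partial^{\alpha}B$ at least one derivative falls either on $N$ (controlled by (1)) or inside $D^{-3/5}$, producing the required factor $O(\sqrt{\mathcal{E}(t)})$.

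The main obstacle is not analytical but combinatorial: the Fa\`a di Bruno expansions for $\partial^{\gamma}(1/N)$ and $\partial^{\gamma}D^{-3/5}$ produce a large number of terms, and one must systematically verify that each of them contains at least one of the small factors controlled by (1)--(3), while every remaining factor is uniformly bounded thanks to the lower bounds on $N$ and $D$. This is organised cleanly by induction on $|\alpha|$ and introduces no new analytic ingredient beyond Lemma \ref{esN}.
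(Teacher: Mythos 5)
Your proposal is correct and follows essentially the same route as the paper: (1)--(3) by Cauchy--Schwarz in $p$ against the Gaussian-type weight $(1+|p|^2+|p|^4)(m-m^2)$, and (4)--(5) by Leibniz/chain-rule (Fa\`a di Bruno) expansions in which every term carries a small factor from (1)--(3) or from Lemma \ref{esN}, with the lower bounds on $N$ and $E-P^2/N$ keeping the denominators harmless. Your explicit appeal to the Sobolev embedding in $x$ to pass from the energy to pointwise-in-$x$ bounds is simply a more careful statement of what the paper leaves implicit.
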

\begin{proof}
(1)-(3) follows directly from applying $\partial^{\alpha}$ and estimating using H\"{o}lder inequality. For example, we have
\begin{align*}
|\partial^{\alpha}E|&=\bigg|\partial^{\alpha}\left(\int_{\mathbb{R}^3}|p|^2\left(m+\sqrt{m-m^2}f\right)dp\right)\bigg| \cr
&\leq \left(\int_{\mathbb{R}^3}|\partial^{\alpha}f|^2dp\right)^{\frac{1}{2}}\left(\int_{\mathbb{R}^3}|p|^4(m-m^2)dp\right)^{\frac{1}{2}} \cr
&\leq C\sqrt{\mathcal{E}(t)}.
\end{align*}
(4) A direct application of Leibniz rule and product rule of differentiation gives
\begin{align}\label{frac}
\begin{split}
\bigg|\partial^{\alpha}\left(\frac{P}{N}\right)\bigg|  &\leq C_{\alpha} \left(\sum_{|\alpha_1|\leq  |\alpha|}|\partial^{\alpha_1}P|\right)\left(\sum_{|\alpha_2|\leq |\alpha|}\bigg|\partial^{\alpha_2}\frac{1}{N}\bigg|\right)	\cr
&\leq C_{\alpha} \left(\sum_{|\alpha_1|\leq  |\alpha|}|\partial^{\alpha_1}P|\right)
\left(\sum_{0\leq n \leq  |\alpha|}\bigg|\frac{1}{N}\bigg|^{n+1}\right)\left(\sum_{1\leq|\alpha_2|\leq |\alpha|}|\partial^{\alpha_2}N|\right)^{|\alpha|}.
\end{split}
\end{align}
Then the desired result follows from the estimate (1),(2) of this lemma and Lemma \ref{esN} (1).
(5) Using chain rule, together with Lemma \ref{esN} and previous estimates in this lemma, the derivatives of denominator can be estimated as
\begin{align}\label{exp}
\begin{split}
\bigg|\partial^{\alpha}\left(E-\frac{P^2}{N}\right)^{\frac{3}{5}}\bigg| &\leq C_{\alpha}\left(\sum_{1\leq n\leq |\alpha|}\left(E-\frac{P^2}{N}\right)^{\frac{3}{5}-n}\right)
\left(\sum_{|\alpha_2|\leq |\alpha|}\bigg|\partial^{\alpha_2}\left(E-\frac{P^2}{N}\right)\bigg|\right)^{|\alpha|}	\cr
&\leq  C_{\alpha} \left(\sum_{1\leq n\leq |\alpha|}\left(E_0-C\sqrt{\mathcal{E}(t)}\right)^{\frac{3}{5}-n}\right) \left(C\sqrt{\mathcal{E}(t)}+C_{\alpha}\mathcal{E}(t)\right)^{|\alpha|}	\cr
&\leq C_{\alpha} \sqrt{\mathcal{E}(t)}.
\end{split}
\end{align}
Then the desired result follows directly from this and
\begin{align*}
|\partial^{\alpha}B\left(N,P,E\right)|  &\leq
C_{\alpha} \left(\sum_{|\alpha_1|\leq  |\alpha|}|\partial^{\alpha_1}N|\right)\left(\sum_{0\leq n \leq  |\alpha|}\bigg|\frac{1}{\left(E-\frac{P^2}{N}\right)^{\frac{3}{5}}}\bigg|^{n+1}\right) \cr
& \quad \times \left(\sum_{1\leq|\alpha_2|\leq |\alpha|}\bigg|\partial^{\alpha_2}\left(E-\frac{P^2}{N}\right)^{\frac{3}{5}}\bigg|\right)^{|\alpha|}.
\end{align*}
\end{proof}
%
%
%
%
\subsection{Estimates on the equilibrium coefficients} We now estimate the equilibrium coefficients $a$ and $c$.
\begin{lemma}\label{esac} Assume $\mathcal{E}(t)$ is sufficiently small. Then we have
\begin{align*}
&(1)~ |c(x,t)-c_0| \leq C\sqrt{\mathcal{E}(t)},	\cr
&(2)~ |a(x,t)-a_0| \leq C\sqrt{\mathcal{E}(t)},
\end{align*}
for some constant $C>0$.
\end{lemma}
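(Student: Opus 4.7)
The plan is to chain together three facts: (i) $c$ is recovered from the macroscopic fields through the inverse of $\beta$, restricted to $(-\ln 3,\infty)$; (ii) on any compact subinterval around $c_0$, both $\beta$ and $\beta^{-1}$ are smooth with $|\beta'|$ bounded away from zero (Corollary~\ref{beta lemma}); and (iii) $B(N,P,E)$ is close to $B(N_0,0,E_0)=\beta(c_0)$ by Lemma~\ref{esN}(4). Combining these yields the estimate for $c$, and then the estimate for $a$ follows by direct inspection since $a$ depends smoothly on $c$ and $N$.

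For (1), I would first use the assumption $c_0>-\ln 3$, fix an $\varepsilon>0$ so small that $[c_0-\varepsilon,c_0+\varepsilon]\subset(-\ln 3,\infty)$, and apply Corollary~\ref{beta lemma} to obtain a constant $C_{\varepsilon,\ell}>0$ with $|\beta'(c)|\ge C_{\varepsilon,\ell}$ on that interval. Provided $\mathcal{E}(t)$ is sufficiently small, Lemma~\ref{esN}(4) gives $|B(N,P,E)-\beta(c_0)|\le C\sqrt{\mathcal{E}(t)}$, which in particular keeps $B(N,P,E)$ inside $\beta\big([c_0-\varepsilon,c_0+\varepsilon]\big)$. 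By the strict monotonicity of $\beta$ on $(-\ln 3,\infty)$ established in Proposition~\ref{betathm}, Theorem~\ref{unique c} produces a unique $c(x,t)$ in that range with $\beta(c(x,t))=B(N,P,E)$, and this $c(x,t)$ automatically lies in $[c_0-\varepsilon,c_0+\varepsilon]$. Applying the mean value theorem to $\beta^{-1}$ (whose derivative is bounded by $C_{\varepsilon,\ell}^{-1}$ on this interval), we conclude
\[
|c(x,t)-c_0|=\big|\beta^{-1}(B(N,P,E))-\beta^{-1}(\beta(c_0))\big|\le C_{\varepsilon,\ell}^{-1}|B(N,P,E)-\beta(c_0)|\le C\sqrt{\mathcal{E}(t)}.
\]

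For (2), recall
\[
a(x,t)=\Big(\textstyle\int_{\mathbb{R}^3}\frac{1}{e^{|p|^2+c(x,t)}+1}dp\Big)^{2/3}N(x,t)^{-2/3},\quad a_0=\Big(\textstyle\int_{\mathbb{R}^3}\frac{1}{e^{|p|^2+c_0}+1}dp\Big)^{2/3}N_0^{-2/3}.
\]
The map $(c,N)\mapsto a$ is $C^\infty$ on the region $\{|c-c_0|\le\varepsilon,\ N\ge N_0/2\}$, since the integral defining it and its derivative in $c$ are dominated by $c$-uniform integrable bounds and $N$ is bounded below by Lemma~\ref{esN}(1) once $\mathcal{E}(t)$ is small. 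Hence the mean value theorem gives
\[
|a(x,t)-a_0|\le C\big(|c(x,t)-c_0|+|N(x,t)-N_0|\big)\le C\sqrt{\mathcal{E}(t)},
\]
where the final inequality uses part (1) together with Lemma~\ref{esN}(1).

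The only subtle point is the circular-looking issue in (1): applying Corollary~\ref{beta lemma} requires knowing in advance that $c(x,t)$ stays in $[c_0-\varepsilon,c_0+\varepsilon]$, while we are trying to prove $c$ is close to $c_0$. This is resolved cleanly because smallness of $\mathcal{E}(t)$ forces $B(N,P,E)$ into the $\beta$-image of $[c_0-\varepsilon,c_0+\varepsilon]$, and Theorem~\ref{unique c} then places the unique preimage in this interval. No bootstrap in $t$ is needed; everything is pointwise in $(x,t)$.
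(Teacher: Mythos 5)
Your proposal is correct and follows essentially the same route as the paper: both recover $c=\beta^{-1}(B(N,P,E))$ via Theorem \ref{unique c}, use Lemma \ref{esN}(4) to place $B(N,P,E)$ near $\beta(c_0)$, and apply the mean value theorem with the lower bound on $|\beta'|$ from Corollary \ref{beta lemma}, then deduce (2) from the explicit formula for $a$ together with part (1) and Lemma \ref{esN}(1). The only cosmetic difference is that the paper sandwiches $c$ between $\beta^{-1}\big(N_0/E_0^{3/5}\pm C\sqrt{\mathcal{E}(t)}\big)$ and treats the two factors in $a$ separately by monotonicity, whereas you apply the mean value theorem to $\beta^{-1}$ and to the smooth map $(c,N)\mapsto a$ directly; this is not a substantive deviation.
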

\begin{proof}
(1) 
Since $\mathcal{E}(t)$ is sufficiently small, we have from Lemma \ref{esN} (4) that
\begin{align}\label{first}
0<\frac{N_0}{E_0^{3/5}}- C\sqrt{\mathcal{E}(t)}\leq\frac{N}{\left(E-\frac{P^2}{N}\right)^{3/5}} \leq \frac{N_0}{E_0^{3/5}}+ C\sqrt{\mathcal{E}(t)}<\beta(-\ln3),
\end{align}
so that, in view of Theorem \ref{unique c} and (\ref{beta inverse}), we can represent
\begin{align*}
c=\beta^{-1}\big(\,B(N,P,E)\,\big).
\end{align*}
We then deduce from the monotonicity of $\beta$ and \eqref{first} that
\begin{align}\label{ces}
\beta^{-1}\left(\frac{N_0}{E_0^{\frac{3}{5}}}+C\sqrt{\mathcal{E}(t)}\right) \leq c
\leq \beta^{-1}\left(\frac{N_0}{E_0^{\frac{3}{5}}}-C\sqrt{\mathcal{E}(t)} \right).	
\end{align}
Now, applying mean value theorem (which is possible due to Corollary \ref{beta lemma}) on both sides, we have
\begin{align*}
c\leq \beta^{-1}\left(\frac{N_0}{E_0^{\frac{3}{5}}}\right)-C\sqrt{\mathcal{E}(t)}\frac{1}{\beta'(k)} \quad \textit{for}
\leq c_0+C\sqrt{\mathcal{E}(t)},
\end{align*}
for some
$$
\beta^{-1}\left(\frac{N_0}{E_0^{\frac{3}{5}}}\right)<k<\beta^{-1}\left(\frac{N_0}{E_0^{\frac{3}{5}}}-C\sqrt{\mathcal{E}(t)}\right).
$$
Similarly, we have
\begin{align*}
c
\geq \beta^{-1}\left(\frac{N_0}{E_0^{\frac{3}{5}}}\right)+C\sqrt{\mathcal{E}(t)}\frac{1}{\beta'(k)}
\geq c_0-C\sqrt{\mathcal{E}(t)}.
\end{align*}
for some  $$\beta^{-1}\left(\frac{N_0}{E_0^{\frac{3}{5}}}+C\sqrt{\mathcal{E}(t)}\right)<k<\beta^{-1}\left(\frac{N_0}{E_0^{\frac{3}{5}}}\right)$$
Note that we have used Corollary \ref{beta lemma} (2) to bound $1/|\beta^{\prime}(k)|$.

\noindent(2) Thanks to the estimate (1) of this lemma and Lemma \ref{esN} (1), we estimate
\begin{align*}
a&=\left(\int_{\mathbb{R}^3}\frac{1}{e^{|p|^2+c}+1}dp\right)^\frac{2}{3}N^{-\frac{2}{3}} \leq \left(\int_{\mathbb{R}^3}\frac{1}{e^{|p|^2+c_0-C\sqrt{\mathcal{E}(t)}}+1}dp\right)^\frac{2}{3}(N_0-C\sqrt{\mathcal{E}(t)})^{-\frac{2}{3}}.
\end{align*}
Applying mean value theorem on
\begin{align*}
f(x)=\int_{\mathbb{R}^3}\frac{1}{e^{|p|^2+x}+1}dp,\quad \mbox{ and }\quad g(x)=x^{-2/3},
\end{align*}
yields
\begin{align*}
a&\leq \left(\int_{\mathbb{R}^3}\frac{1}{e^{|p|^2+c_0}+1}dp-C\sqrt{\mathcal{E}(t)}\int_{\mathbb{R}^3}\frac{-e^{|p|^2+k}}{(e^{|p|^2+k}+1)^2}dp\right)^\frac{2}{3}
\left(N_0^{-\frac{2}{3}}+\frac{2C}{3}\sqrt{\mathcal{E}(t)}h^{-\frac{5}{3}}\right),
\end{align*}
for $k\in(c_0-C\sqrt{\mathcal{E}(t)},c_0)$ and $h\in(N_0-C\sqrt{\mathcal{E}(t)},N_0)$. This gives, for sufficiently large $C$ and sufficiently small $\mathcal{E}(t)$
\begin{align*}
a&\leq \left(\int_{\mathbb{R}^3}\frac{1}{e^{|p|^2+c_0}+1}dp\right)^\frac{2}{3}N_0^{-\frac{2}{3}}+C\sqrt{\mathcal{E}(t)}= a_0+C\sqrt{\mathcal{E}(t)}.
\end{align*}
The estimate for lower bound is almost identical.
\end{proof}
We now turn to the estimates of derivatives of $a$ and $c$.
\begin{lemma}\label{esdc} Suppose $\mathcal{E}(t)$ is sufficiently small and $|\alpha|\geq1$. Then we have the following estimates for $c$.
\begin{align*}
(1) \ &|\partial^{\alpha}c| \leq C_{\alpha}\sqrt{\mathcal{E}(t)}, \cr
(2) \ &|\partial^{\alpha}\beta(c)| \leq C_{\alpha}\sqrt{\mathcal{E}(t)}, \cr
(3) \ &|\partial^{\alpha}(\nabla_{(N,P,E)}c)_{i}| \leq C_{\alpha}\sqrt{\mathcal{E}(t)} \quad \textit{for} \quad i=1,\cdots,5,	\cr
(4) \ &|\partial^{\alpha}(\nabla^2_{(N,P,E)}c)_{i,j}| \leq C_{\alpha}\sqrt{\mathcal{E}(t)}\quad \textit{for} \quad i,j=1,\cdots,5,
\end{align*}
for some $C_{\alpha}>0$.
\end{lemma}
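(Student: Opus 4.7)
The plan is to exploit the fact, already established, that $c=\beta^{-1}(B(N,P,E))$ whenever the argument of $\beta^{-1}$ lies in the range of $\beta|_{(-\ln 3,\infty)}$. Under the smallness hypothesis on $\mathcal{E}(t)$, Lemma \ref{esN} keeps $(N,P,E)$ in an arbitrarily small neighborhood of $(N_0,0,E_0)$, while Lemma \ref{esac} keeps $c$ in a neighborhood of $c_0$ on which Corollary \ref{beta lemma}(2) guarantees $\beta'$ is bounded away from zero. Consequently $\Phi:=\beta^{-1}\circ B$ is $C^{\infty}$ on a fixed open neighborhood $U$ of $(N_0,0,E_0)$, and by the chain/quotient rule together with Corollary \ref{beta lemma}(1), every partial derivative of $\Phi$ of every order is uniformly bounded on $U$.

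For (1), I would apply Fa\`a di Bruno to the composition $c(x,t)=\Phi(N(x,t),P(x,t),E(x,t))$. The resulting expansion writes $\partial^{\alpha}c$ as a finite sum of terms of the form
\[
(\partial^{\gamma}_{(N,P,E)}\Phi)(N,P,E)\;\prod_{k=1}^{|\gamma|}\partial^{\alpha_k}\xi_{i_k},
\]
where $\xi\in\{N,P_1,P_2,P_3,E\}$ and the $\alpha_k$ form an ordered partition of $\alpha$ with $|\alpha_k|\geq 1$. The leading factor is uniformly bounded by the previous paragraph, and each $\partial^{\alpha_k}\xi_{i_k}$ is controlled by $C\sqrt{\mathcal{E}(t)}$ via Lemma \ref{esdN}. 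Since $|\alpha|\geq 1$ forces at least one such factor in every term, every term is $O(\sqrt{\mathcal{E}(t)})$, with higher products absorbed by smallness.

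Part (2) is then immediate from the defining identity $\beta(c)=B(N,P,E)$: applying $\partial^{\alpha}$ to both sides reduces the estimate to Lemma \ref{esdN}(5). For (3) and (4), I would observe that the components of $\nabla_{(N,P,E)}c$ and $\nabla^2_{(N,P,E)}c$ are themselves partial derivatives $\partial^\gamma_{(N,P,E)}\Phi$ with $|\gamma|=1,2$, hence smooth functions on $U$ with bounded derivatives. Replaying the Fa\`a di Bruno argument of (1), with $\Phi$ replaced by the appropriate derivative of $\Phi$, produces the claimed bounds. The genuine content of the lemma, rather than an obstacle, lies in ensuring $\Phi$ and its partials stay smoothly defined throughout, which is exactly what Proposition \ref{betathm} and Corollary \ref{beta lemma} were set up to provide; the remainder is routine Fa\`a di Bruno bookkeeping.
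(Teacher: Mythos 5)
Your proposal is correct and follows essentially the same route as the paper: represent $c=\beta^{-1}\big(B(N,P,E)\big)$, bound all derivatives of the outer function via Corollary \ref{beta lemma} (with Lemmas \ref{esN} and \ref{esac} keeping $c$ near $c_0$), and conclude by chain rule/Fa\`a di Bruno together with Lemma \ref{esdN}, the factor $\sqrt{\mathcal{E}(t)}$ coming from the fact that $|\alpha|\geq 1$ forces at least one derivative to land on a macroscopic field. The only (harmless) deviations are cosmetic: for (2) you differentiate the identity $\beta(c)=B(N,P,E)$ directly instead of re-applying the chain rule, and for (3)--(4) you treat $\nabla_{(N,P,E)}c$, $\nabla^2_{(N,P,E)}c$ abstractly as derivatives of $\Phi$ where the paper writes out the explicit formulas involving $1/\beta'(c)$.
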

\begin{proof}
(1)  Since
\[
(\beta^{-1})^{\prime}\left(B(N,P,E)\right)=\frac{1}{\beta^{\prime}(c)}.
\]
We easily see that $(\beta^{-1})^{(n)}$ takes the following form:
\[
(\beta^{-1})^{(n)}\left(B(N,P,E)\right)=\frac{P\big(\beta(c), \beta^{\prime}(c),\cdots,\beta^{(n)}(c)\big)}{|\beta^{\prime}(c)|^n},
\]
for some generic polynomial $P$ satisfying $P(0,0,\cdots,0)=0$. Therefore, Corollary \ref{beta lemma}, Lemma \ref{esN} (4) and Lemma \ref{esac} (1) give the following uniform bound
\[
\left|(\beta^{-1})^{(n)}B(N,P,E)\right|\leq C_n,
\]
for some $C_n>0$.
Then, the desired result follows from this, together with Lemma \ref{esdN} (5), and the following computation:
\begin{align*}
|\partial^{\alpha}c| &= \partial^{\alpha}\left\{\beta^{-1}\left(B(N,P,E)\right)\right\}	\cr
&\leq C_{\alpha}\sum_{n\leq|\alpha|}
\left(\bigg|\left(\beta^{-1}\right)^{(n)}B(N,P,E)\bigg|\right)	
\left(\sum_{1\leq|\alpha_1|\leq |\alpha|}\bigg|\partial^{\alpha_1}B(N,P,E)\bigg|\right)^{|\alpha|}.
\end{align*}

\noindent(2) Estimate on the derivative of $c$ above and Corollary \ref{beta lemma} readily gives
\begin{align*}
|\partial^{\alpha}\beta(c)| &\leq
C_{\alpha}\sum_{n\leq|\alpha|}
\left(|\beta^{(n)}(c)|\right)
\left(\sum_{1\leq|\alpha_1|\leq |\alpha|}|\partial^{\alpha_1}c|\right)^{|\alpha|}	
\leq C_{\alpha}\sqrt{\mathcal{E}(t)}.
\end{align*}
\noindent(3) We will consider the derivatives of $\partial c/\partial N$. We recall from Lemma \ref{diffc} (1) that
\begin{align*}
\frac{\partial c}{\partial N} &= \frac{1}{\beta'(c)}  \frac{E-\frac{8}{5}\frac{P^2}{N}}{(E-\frac{P^2}{N})^{\frac{8}{5}}}.
\end{align*}
Take $\partial^{\alpha}$, then we obtain
\begin{align}\label{p alpha}
\begin{split}
\bigg|\partial^{\alpha}\frac{\partial c}{\partial N}\bigg|
&\leq C_{\alpha }\sum_{|\alpha_1|+|\alpha_2| \leq|\alpha|}\bigg|\partial^{\alpha_1}\left( \frac{1}{\beta'(c)}\right)\bigg|  \bigg|\partial^{\alpha_2} \left(\frac{E-\frac{8}{5}\frac{P^2}{N}}{\left(E-\frac{P^2}{N}\right)^{\frac{8}{5}}}\right)\bigg|.
\end{split}
\end{align}
Employing the estimate (1) of this lemma and Corollary \ref{beta lemma}, we can estimate
\begin{align*}
\bigg|\partial^{\alpha} \left(\frac{1}{\beta'(c)}\right)\bigg|
&\leq C_{\alpha}
\left(\sum_{0\leq n \leq  |\alpha|}\bigg|\frac{1}{\left(\beta'(c)\right)^{n+1}}\bigg|\right)\left(\sum_{1\leq|\alpha_1|\leq |\alpha|}\big|\partial^{\alpha_1}\left\{\beta'(c)\right\}\big|\right)^{|\alpha|}	\cr
&\leq C_{\alpha}
\left(\sum_{0\leq n \leq  |\alpha|}\bigg|\frac{1}{\left(\beta'(c)\right)^{n+1}}\bigg|\right)\left(\sum_{1\leq|\alpha_1|\leq |\alpha|}\left|\beta^{(1+|\alpha_1|)}(c)\right|\left(\sum_{|\alpha_1|\leq|\alpha|}|\partial^{\alpha_1}c|\right)^{|\alpha|}\right)^{|\alpha|}	\cr
& \leq C_{\alpha} \sqrt{\mathcal{E}(t)}.
\end{align*}
On the other hand, by an almost identical manner as in the proof of Lemma \ref{esdN} (5), we can derive
\begin{align*}
\bigg|\partial^{\alpha} \left(\frac{E-\frac{8}{5}\frac{P^2}{N}}{\left(E-\frac{P^2}{N}\right)^{\frac{8}{5}}}\right)\bigg|
\leq C_{\alpha}\sqrt{\mathcal{E}(t)}.
\end{align*}
Inserting these estimates into (\ref{p alpha}) gives the desired result.\newline
\noindent(4) We only consider $(1,1)$ elements of $\nabla^2_{(N,P,E)}c$, which is
\begin{align*}
\frac{\partial^2c}{\partial N^2}= \frac{-\beta''(c)}{(\beta'(c))^3}  \frac{\left(E-\frac{8}{5}\frac{P^2}{N}\right)^2}{\left(E-\frac{P^2}{N}\right)^{\frac{16}{5}}}+ \frac{24}{25}  \frac{1}{\beta'(c)} \frac{P^4}{N^3}  \frac{1}{\left(E-\frac{P^2}{N}\right)^{\frac{13}{5}}}.
\end{align*}
Therefore, we can bound it by  $C\sqrt{\mathcal{E}(t)}$ similarly as in the proof of (3).
\end{proof}
\begin{lemma}\label{esda} Suppose $\mathcal{E}(t)$ is sufficiently small and $|\alpha|\geq1$. Then we have the following estimates for $a$.
\begin{align*}
(1) \ &|\partial^{\alpha}a| \leq C_{\alpha}\sqrt{\mathcal{E}(t)},	\cr
(2) \ &|\partial^{\alpha}(\nabla_{(N,P,E)}a)_{i}| \leq C_{\alpha}\sqrt{\mathcal{E}(t)} \quad \textit{for} \quad i=1,\cdots,5,	\cr
(3) \ &|\partial^{\alpha}(\nabla^2_{(N,P,E)}a)_{i,j}| \leq C_{\alpha}\sqrt{\mathcal{E}(t)}\quad \textit{for} \quad i,j=1,\cdots,5,
\end{align*}
for some $C_{\alpha}>0$.
\end{lemma}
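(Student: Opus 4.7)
The strategy is to mimic the structure of Lemma \ref{esdc}, exploiting the fact that $a$ is an explicit smooth function of $c$ and $N$. Recalling
\[
a(x,t)=\left(\int_{\mathbb{R}^3}\frac{1}{e^{|p|^2+c(x,t)}+1}dp\right)^{\frac{2}{3}}N(x,t)^{-\frac{2}{3}},
\]
I would begin by noting that, thanks to Lemma \ref{esac} and Lemma \ref{esN}~(1), both $c(x,t)$ and $N(x,t)$ remain within a fixed compact neighborhood of $c_0$ and $N_0$ respectively when $\mathcal{E}(t)$ is small enough. Consequently the functions
\[
\Phi(c)=\int_{\mathbb{R}^3}\frac{1}{e^{|p|^2+c}+1}\,dp,\qquad \Psi(c)=\int_{\mathbb{R}^3}\frac{e^{|p|^2+c}}{(e^{|p|^2+c}+1)^2}\,dp,
\]
together with all of their $c$-derivatives, are uniformly bounded along the solution, and $N^{-1}$ enjoys uniform upper and lower bounds. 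This provides the analog of Corollary \ref{beta lemma} in the present context.

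For part~(1), I would apply $\partial^\alpha$ to the product $(\Phi(c))^{2/3}\cdot N^{-2/3}$ using the Leibniz rule and the Fa\`a di Bruno chain rule. Each term in the resulting sum is schematically a product of a bounded smooth function of $c$ and $N$ with factors of the form $\partial^{\alpha_1}c$ and $\partial^{\alpha_2}N$ with $|\alpha_1|,|\alpha_2|\geq 1$. Lemma \ref{esdN}~(1) and Lemma \ref{esdc}~(1) then give $|\partial^{\alpha_i}c|,|\partial^{\alpha_i}N|\leq C\sqrt{\mathcal{E}(t)}$; since $|\alpha|\geq 1$ at least one such factor appears in every term, and the remaining factors of $\sqrt{\mathcal{E}(t)}$ can be absorbed when $\mathcal{E}(t)$ is small, yielding $|\partial^\alpha a|\leq C_\alpha\sqrt{\mathcal{E}(t)}$.

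For parts~(2) and~(3), I would use the explicit expressions for the first and second partial derivatives of $a$ with respect to $(N,P,E)$. The computation underlying Lemma \ref{diffa} extends verbatim (replacing $\theta=0$ by general $\theta$, equivalently evaluating at $(N,P,E)$) to give formulas of the form
\[
(\nabla_{(N,P,E)}a)_i=\frac{2}{3}\Phi(c)^{-1/3}N^{-2/3}\!\left(\Phi'(c)\,(\nabla_{(N,P,E)}c)_i\cdot N^{-1}-\Phi(c)\,N^{-2}\delta_{i1}\right),
\]
and similarly (more involved) for the second derivatives, which will bring in $(\nabla^2_{(N,P,E)}c)_{ij}$ and products of first derivatives of $c$. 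Each such expression is a rational function (with nonvanishing denominator near $N_0$) of $N$, $\Phi(c)$, $\Psi(c)$, $(\nabla_{(N,P,E)}c)_i$, $(\nabla^2_{(N,P,E)}c)_{ij}$, all of which are smooth in their arguments. Applying $\partial^\alpha$ and repeating the Leibniz/chain-rule bookkeeping, each term again contains at least one factor of the form $\partial^{\alpha'}N$, $\partial^{\alpha'}P$, $\partial^{\alpha'}E$, $\partial^{\alpha'}c$, $\partial^{\alpha'}(\nabla_{(N,P,E)}c)_i$, or $\partial^{\alpha'}(\nabla^2_{(N,P,E)}c)_{ij}$ with $|\alpha'|\geq 1$, all of which are bounded by $C_\alpha\sqrt{\mathcal{E}(t)}$ thanks to Lemma \ref{esdN} and Lemma \ref{esdc}~(3),(4).

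The only mildly delicate point is verifying that all the denominators appearing (powers of $N$, of $\Phi(c)$, and of $\beta'(c)$ that are inherited from Lemma \ref{esdc} when we differentiate $(\nabla_{(N,P,E)}c)_i$) remain uniformly bounded away from zero. This however follows immediately from the smallness of $\mathcal{E}(t)$ combined with Lemma \ref{esN}~(1), Lemma \ref{esac}, and Corollary \ref{beta lemma}~(2); once these uniform bounds are in place, the argument is essentially a tedious but routine Leibniz/chain-rule expansion, so I would only carry out the details for the representative case $(1,1)$ and indicate that the other components are handled identically.
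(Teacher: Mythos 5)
Your proposal follows essentially the same route as the paper: write $a=\Phi(c)^{2/3}N^{-2/3}$, compute a representative derivative explicitly, and control everything through the uniform boundedness of $\Phi,\Psi$ (the paper's $h,k$) and their $c$-derivatives together with Lemmas \ref{esN}--\ref{esdc}; the paper likewise carries out only the representative case $\partial^{\alpha}\bigl(\partial^2 a/\partial N^2\bigr)$ and cites the same ingredients. The only blemish is a harmless slip in your displayed formula for $(\nabla_{(N,P,E)}a)_i$ --- the correct expression is $\frac{2}{3}\Phi(c)^{-1/3}N^{-2/3}\bigl(\Phi'(c)(\nabla_{(N,P,E)}c)_i-\Phi(c)N^{-1}\delta_{i1}\bigr)$, without the extra powers of $N^{-1}$ --- which does not affect the validity of the estimate.
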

\begin{proof}
We only consider $\partial^{\alpha}\left(\partial^2 a/\partial N^2\right)$. Recall the definition of $a$:
\begin{align*}
a=\left(\int_{\mathbb{R}^3}\frac{1}{e^{|p|^2+c}+1}dp\right)^\frac{2}{3}N^{-\frac{2}{3}}.
\end{align*}
Then explicit calculations give
\begin{align*}
\frac{\partial a}{\partial N}
&= \frac{2}{3} \left(\frac{\int_{\mathbb{R}^3}\frac{1}{e^{|p|^2+c}+1}dp}{N}\right)^{-\frac{1}{3}}
\left(\frac{N\int_{\mathbb{R}^3}\frac{-e^{|p|^2+c}}{(e^{|p|^2+c}+1)^2}\frac{\partial c}{\partial N}dp-\int_{\mathbb{R}^3}\frac{1}{e^{|p|^2+c}+1}dp}{N^2}\right).
\end{align*}
and
\begin{align*}
\frac{\partial^2a}{\partial N^2}
&=  -\frac{2}{9}
\left(\frac{\int_{\mathbb{R}^3}\frac{1}{e^{|p|^2+c}+1}dp}{N}\right)^{-\frac{4}{3}}
\left(\frac{N\int_{\mathbb{R}^3}\frac{-e^{|p|^2+c}}{(e^{|p|^2+c}+1)^2}\frac{\partial c}{\partial N}dp-\int_{\mathbb{R}^3}\frac{1}{e^{|p|^2+c}+1}dp}{N^2}\right)^2 \cr
& \quad +
\frac{2}{3} \left(\frac{\int_{\mathbb{R}^3}\frac{1}{e^{|p|^2+c}+1}dp}{N}\right)^{-\frac{1}{3}}\frac{1}{N^2}\frac{\partial c}{\partial N}\int_{\mathbb{R}^3}\frac{-e^{|p|^2+c}}{(e^{|p|^2+c}+1)^2}dp
\cr
&\quad +\frac{2}{3} \left(\frac{\int_{\mathbb{R}^3}\frac{1}{e^{|p|^2+c}+1}dp}{N}\right)^{-\frac{1}{3}}\frac{1}{N}\frac{\partial^2 c}{\partial N^2}\int_{\mathbb{R}^3}\frac{-e^{|p|^2+c}}{(e^{|p|^2+c}+1)^2}dp	\cr &\quad +\frac{2}{3} \left(\frac{\int_{\mathbb{R}^3}\frac{1}{e^{|p|^2+c}+1}dp}{N}\right)^{-\frac{1}{3}}\frac{1}{N}\left(\frac{\partial c}{\partial N}\right)^2
\int_{\mathbb{R}^3}\frac{e^{|p|^2+c}(e^{|p|^2+c}-1)}{(e^{|p|^2+c}+1)^3}dp	\cr
&\quad -\frac{2}{3} \left(\frac{\int_{\mathbb{R}^3}\frac{1}{e^{|p|^2+c}+1}dp}{N}\right)^{-\frac{1}{3}}\frac{1}{N^2}\frac{\partial c}{\partial N}\int_{\mathbb{R}^3}\frac{-e^{|p|^2+c}}{(e^{|p|^2+c}+1)^2}dp	\cr
& \quad -\frac{2}{N^2}\int_{\mathbb{R}^3}\frac{-e^{|p|^2+c}}{(e^{|p|^2+c}+1)^2}\frac{\partial c}{\partial N}dp+\frac{2}{N^3}\int_{\mathbb{R}^3}\frac{1}{e^{|p|^2+c}+1}dp.
\end{align*}
Therefore, the desired estimate is derived once we obtain the estimates for the derivatives of
\[
h(c)=\int_{\mathbb{R}^3}\frac{1}{e^{|p|^2+c}+1}dp,\quad k(c)=\int_{\mathbb{R}^3}\frac{-e^{|p|^2+c}}{(e^{|p|^2+c}+1)^2}dp.
\]
Then, it can be easily verified through an explicit computation that
\begin{align*}
|h^{(n)}(c)|,~|k^{(n)}(c)|  \leq C \int_{\mathbb{R}^3}\frac{1}{e^{|p|^2+c}+1}dp,
\end{align*}
which, thanks to the chain rule and Lemma \ref{esdc} (1), leads to
\begin{align*}
|\partial^{\alpha}h(c)|,~|\partial^{\alpha}k(c)|&\leq C_{\alpha}\sqrt{\mathcal{E}(t)}.
\end{align*}
This, together with  Lemma \ref{esN}-\ref{esdc}, gives the desired result.

\end{proof}
%
%
%
%
%
\section{Local existence}
\subsection{Estimates on the nonlinear term} Using the estimates for the macroscopic fields $(N,P,E)$ and the equilibrium coefficients $(a,c)$ in the previous section, we derive the following  estimate of the nonlinear terms:
\begin{proposition}\label{prop} Suppose $\mathcal{E}(t)$ is sufficiently small enough to satisfies  Lemma \ref{esN} - \ref{esda}. Then we have
\begin{align*}
\ \bigg|\int_{\mathbb{R}^3}\partial_{\beta}^{\alpha}\Gamma(f)gdp\bigg| &\leq C \!\!\!\sum_{|\alpha_1|+|\alpha_2|\leq|\alpha|}||\partial^{\alpha_1}f||_{L_p^2}||\partial^{\alpha_2}f||_{L_p^2}||g||_{L_p^2}\cr
&+ C \!\!\!\!\sum_{|\alpha_1|+|\alpha_2|+|\alpha_3|\leq|\alpha|}||\partial^{\alpha_1}f||_{L_p^2}||\partial^{\alpha_2}f||_{L_p^2}||\partial^{\alpha_3}f||_{L_p^2}||g||_{L_p^2}.
\end{align*}
\end{proposition}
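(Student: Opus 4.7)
The plan is to decompose $\Gamma(f)=\Gamma_1(f)+\Gamma_2(f)+\Gamma_3(f)$ according to (\ref{Gamma}) and estimate each piece separately, with the quadratic pieces $\Gamma_1,\Gamma_2$ producing the first (quadratic) sum on the right-hand side and the cubic piece $\Gamma_3$ producing the second. The structural observation that makes this tractable is that the inner products $\langle f,e_i\rangle_{L^2_p}$ are functions of $(x,t)$ only and the coefficients $\mathcal{C}^{\tau}_i$ have no $p$-dependence, so $\partial_{\beta}$ falls only on the $p$-dependent pieces $\mathcal{B}^{\mathcal{F}}_{i,j}(x,p,t)$, on $e_i(p)$ inside $Pf$, and on $f$ itself, while $\partial^{\alpha}$ falling on $\langle f,e_i\rangle_{L^2_p}$ can be moved onto $f$ by linearity: $\partial^{\alpha}\langle f,e_i\rangle_{L^2_p}=\langle \partial^{\alpha}f,e_i\rangle_{L^2_p}$.

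The main preparatory step is to establish pointwise weight bounds of the form
\[
\bigl|\partial^{\alpha}_{\beta}\mathcal{B}^{\mathcal{F}}_{i,j}(x,p,t)\bigr|\le C\,h(p),\qquad \bigl|\partial^{\alpha}\mathcal{C}^{\tau}_i(x,t)\bigr|\le C,
\]
where $h(p)$ is a polynomial in $p$ times $\sqrt{m-m^2}$, hence in $L^2_p$. For $\mathcal{C}^{\tau}_i$, this follows directly from Theorem \ref{LinearizeCol} combined with the estimates of Section 4: Lemmas \ref{esN}-\ref{esda} place $(N,P,E,a,c)$ in a small neighborhood of $(N_0,0,E_0,a_0,c_0)$ with derivatives of size $O(\sqrt{\mathcal{E}(t)})$, and Corollary \ref{beta lemma} gives $|\beta'(c)|\ge C_\varepsilon>0$. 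For $\mathcal{B}^{\mathcal{F}}_{i,j}$, I use the representation of Lemma \ref{diff2}: under the smallness hypothesis $\mathcal{F}(\theta)$ stays uniformly close to $m$, so the ratio $(\mathcal{F}(\theta)-\mathcal{F}(\theta)^2)/\sqrt{m-m^2}$ inherits the exponential decay of $\sqrt{m-m^2}$, while the structural hypotheses $(\mathcal{H}_{\mathcal{F}}1),(\mathcal{H}_{\mathcal{F}}2)$ together with Lemmas \ref{esdN}, \ref{esdc}, \ref{esda} bound every $\partial^{\alpha}_{\beta}\mathcal{Q}^{\mathcal{F}}_{i,j}(\theta)$ by a polynomial in $p$ with coefficients uniform in $(x,t,\theta)$.

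With these weight bounds in hand, the remainder is Leibniz plus Cauchy-Schwarz. For $\Gamma_1$, each derivative falls either on $\mathcal{B}^{\mathcal{F}}_{i,j}$ (absorbed into $h(p)$) or on one of the two inner products (transferred to $f$); integrating against $g$ in $p$ yields $\|h\|_{L^2_p}\|g\|_{L^2_p}\,|\langle\partial^{\alpha_1}f,e_i\rangle_{L^2_p}|\,|\langle\partial^{\alpha_2}f,e_j\rangle_{L^2_p}|\le C\|\partial^{\alpha_1}f\|_{L^2_p}\|\partial^{\alpha_2}f\|_{L^2_p}\|g\|_{L^2_p}$, and summing over $|\alpha_1|+|\alpha_2|\le|\alpha|$ gives the quadratic contribution. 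For $\Gamma_2$, the only extra subtlety is when $\partial_{\beta}$ strikes $Pf-f$: on the $Pf$ side it produces $\sum\langle f,e_i\rangle_{L^2_p}\partial_\beta e_i(p)$, again an $L^2_p$-weight times an inner product, and on the $-f$ side the resulting term is absorbed into a generic derivative of $f$ on the right-hand side. The cubic piece $\Gamma_3$ is handled identically but produces three inner-product factors, yielding the second sum.

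The main obstacle is the weight bound in the second paragraph: proving that $\partial^{\alpha}_{\beta}\mathcal{B}^{\mathcal{F}}_{i,j}$ is dominated by a fixed $L^2_p$ weight uniformly in $(x,t)$ requires tracking how differentiation interacts with the composite integrand $\mathcal{Q}^{\mathcal{F}}_{i,j}(\theta)(\mathcal{F}(\theta)-\mathcal{F}(\theta)^2)/\sqrt{m-m^2}$. The structural form $(\mathcal{H}_{\mathcal{F}}1),(\mathcal{H}_{\mathcal{F}}2)$ ensures the numerator is a polynomial in data vanishing at the reference point while the denominator is a monomial in $N_{\theta}$ bounded away from zero by Lemma \ref{esN}, so the bounds of Lemmas \ref{esdN}-\ref{esda} propagate through the Leibniz expansion without introducing unbounded $p$-growth. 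Once this pointwise estimate is secured, everything else is a careful but essentially routine bookkeeping of multi-indices.
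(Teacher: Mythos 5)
Your proposal is correct and follows essentially the same route as the paper's proof: a pointwise-in-$p$ bound on $\partial^{\alpha}_{\beta}$ of the kernel $\mathcal{Q}^{\mathcal{F}}_{i,j}(\theta)\,(\mathcal{F}(\theta)-\mathcal{F}(\theta)^2)/\sqrt{m-m^2}$ (the paper's Claim, with explicit weight $C e^{-\frac{a_0}{8}|p|^2}$), deduced from the Section 4 estimates on $(N,P,E,a,c)$ and Corollary \ref{beta lemma}, followed by Leibniz, transferring $\partial^{\alpha}$ onto $f$ inside the inner products, and H\"older in $p$. The only differences are cosmetic: the paper details only $\Gamma_1$ (declaring $\Gamma_2,\Gamma_3$ analogous, which you sketch explicitly) and uses a Gaussian weight where you use a polynomial times $\sqrt{m-m^2}$, both of which lie in $L^2_p$.
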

\begin{proof}
We only consider $\Gamma_1$. Estimates for other terms are almost identical. Recall
\begin{align*}
\Gamma_1(f)=\sum_{1\leq i,j\leq 5}\left\{\int_0^1\mathcal{Q}_{i,j}^{\mathcal{F}}(\theta)\frac{(\mathcal{F}(\theta)-\mathcal{F}(\theta)^2)}{\sqrt{m-m^2}}(1-\theta)d\theta\right\}\langle f,e_i\rangle_{L^2_p} \langle f,e_j \rangle_{L^2_p}.
\end{align*}
We first claim the following:\newline
\noindent $\bullet$ {\bf Claim:} For sufficiently small $\mathcal{E}(t)$, we obtain
\begin{align*}
\bigg|\partial_{\beta}^{\alpha}\left\{\mathcal{Q}_{i,j}^{\mathcal{F}}(\theta)\frac{\mathcal{F}(\theta)-\mathcal{F}(\theta)^2}{\sqrt{m-m^2}}\right\}\bigg| \leq C_{\alpha,\beta} e^{-\frac{a_0}{8}|p|^2},
\end{align*}
for some $C_{\alpha,\beta}>0$. \newline

\noindent {\bf Proof of the claim:}
By Leibniz's rule
\begin{align*}
\bigg|\partial_{\beta}^{\alpha}\left\{\mathcal{Q}_{i,j}^{\mathcal{F}}(\theta)\frac{\mathcal{F}(\theta)-\mathcal{F}(\theta)^2}{\sqrt{m-m^2}}\right\}\bigg| &=C_{\alpha,\beta} \sum_{\substack{|\alpha_1|+|\alpha_2|\leq|\alpha|\cr |\beta_1|+|\beta_2|\leq |\beta|}}\big|\partial_{\beta_1}^{\alpha_1}\left\{\mathcal{Q}_{i,j}^{\mathcal{F}}\right\}\big|\bigg|\partial_{\beta_2}^{\alpha_2}\left\{\frac{\mathcal{F}(\theta)-\mathcal{F}(\theta)^2}{\sqrt{m-m^2}}\right\}\bigg|.
\end{align*}
The uniform bound of $\big|\partial_{\beta_1}^{\alpha_1}\left\{\mathcal{Q}_{i,j}^{\mathcal{F}}\right\}\big|\leq C_N$ is rather straightforward (and tedious) from
the definition and all the upper and lower bound estimates for the equilibrium coefficients and conservative quantities in the previous section.
For the remaining part, we observe from
\[
\frac{1}{\sqrt{m-m^2}}=e^{\frac{a_0}{2}|p|^2+\frac{c_0}{2}}+e^{-\frac{a_0}{2}|p|^2-\frac{c_0}{2}},
\]
that
\begin{align*}
&\bigg|\partial_{\beta}^{\alpha}\left\{\frac{\mathcal{F}(\theta)-\mathcal{F}(\theta)^2}{\sqrt{m-m^2}}\right\}\bigg|\cr
&\leq C_{\alpha,\beta} \sum_{\substack{|\beta_1|+|\beta_2|\leq |\beta|}}\bigg|\partial_{\beta_1}\left(e^{\frac{a_0}{2}|p|^2+\frac{c_0}{2}}+e^{-\frac{a_0}{2}|p|^2-\frac{c_0}{2}}\right)\bigg|  ~\bigg|\partial_{\beta_2}^{\alpha}\left(\frac{e^{a_{\theta}\big|p-\frac{P_{\theta}}{N_{\theta}}\big|^2+c_{\theta}}}{\left(e^{a_{\theta}\big|p-\frac{P_{\theta}}{N_{\theta}}\big|^2+c_{\theta}}+1\right)^2}\right)\bigg|.	
\end{align*}
By a simple calculation, we get
\begin{align*}
\bigg|\partial_{\beta}\left(e^{\frac{a_0}{2}|p|^2+\frac{c_0}{2}}+e^{-\frac{a_0}{2}|p|^2-\frac{c_0}{2}}\right)\bigg|
&\leq \bigg|P_{\beta}(a_0,p)\left(e^{\frac{a_0}{2}|p|^2+\frac{c_0}{2}}+e^{-\frac{a_0}{2}|p|^2-\frac{c_0}{2}}\right)\bigg|.
\end{align*}
 and
\begin{align*}
\bigg|\partial_{\beta}^{\alpha}\left(\frac{e^{a_{\theta}\big|p-\frac{P_{\theta}}{N_{\theta}}\big|^2+c_{\theta}}}{\left(e^{a_{\theta}\big|p-\frac{P_{\theta}}{N_{\theta}}\big|^2+c_{\theta}}+1\right)^2}\right)\bigg|
&\leq \bigg|P_{\alpha,\beta}\left(\partial^{\alpha} a_{\theta},\partial^{\alpha} c_{\theta},\partial^{\alpha}_{\beta} \left(p-\frac{P_{\theta}}{N_{\theta}}\right)\right)
\frac{1}{e^{a_{\theta}\big|p-\frac{P_{\theta}}{N_{\theta}}\big|^2+c_{\theta}}+1}\bigg|\cr
&\leq \bigg|P_{\alpha,\beta}\left(\partial^{\alpha} a_{\theta},\partial^{\alpha} c_{\theta},\partial^{\alpha}\frac{P_{\theta}}{N_{\theta}}\right)
\frac{1}{e^{a_{\theta}\big|p-\frac{P_{\theta}}{N_{\theta}}\big|^2+c_{\theta}}+1}\bigg|,
\end{align*}
where $P_{\beta}$ and $P_{\alpha,\beta}$ denote generically defined polynomials. These estimates and the lower and upper bounds established
in the previous  section on the equilibrium coefficients give
\begin{align*}
\bigg|\partial_{\beta}^{\alpha}\left\{\frac{\mathcal{F}(\theta)-\mathcal{F}(\theta)^2}{\sqrt{m-m^2}}\right\}\bigg|
\leq C_{\alpha,\beta}
 \frac{e^{\frac{a_0}{2}|p|^2+\frac{c_0}{2}}+e^{-\frac{a_0}{2}|p|^2-\frac{c_0}{2}}}{e^{a_{\theta}\big|p-\frac{P_{\theta}}{N_{\theta}}\big|^2+c_{\theta}}+1}.
\end{align*}
Finally, the desired estimate follows from the following computation:
\begin{align*}
\frac{e^{\frac{a_0}{2}|p|^2+\frac{c_0}{2}}+e^{-\frac{a_0}{2}|p|^2-\frac{c_0}{2}}}{e^{a_{\theta}\big|p-\frac{P_{\theta}}{N_{\theta}}\big|^2+c_{\theta}}+1}	&\leq
\frac{e^{\frac{a_0}{2}|p|^2+\frac{c_0}{2}}+e^{-\frac{a_0}{2}|p|^2-\frac{c_0}{2}}}{e^{a_{\theta}\left(\frac{3}{4}|p|^2-3\frac{P_{\theta}^2}{N_{\theta}^2}\right)+c_{\theta}}} \cr
&\leq
\frac{e^{\frac{a_0}{2}|p|^2+\frac{c_0}{2}}+e^{-\frac{a_0}{2}|p|^2-\frac{c_0}{2}}}{e^{\left(a_0-C\sqrt{\mathcal{E}(t)}\right)\left(\frac{3}{4}|p|^2-C\sqrt{\mathcal{E}(t)}\right)+c_0-C\sqrt{\mathcal{E}(t)}}} \cr
&\leq e^{\left(-\frac{a_{0}}{4}+C\sqrt{\mathcal{E}(t)}\right)|p|^2-\frac{c_{0}}{2}+C\sqrt{\mathcal{E}(t)}}+e^{\left(-\frac{3a_{0}}{4}+C\sqrt{\mathcal{E}(t)}\right)|p|^2-\frac{3c_{0}}{2}+C\sqrt{\mathcal{E}(t)}}\cr
&\leq Ce^{-\frac{a_{0}}{8}|p|^2},
\end{align*}
for sufficiently small $\mathcal{E}(t)$. This completes the proof of the claim.
Now we turn to the proof of the proposition:\newline

Using the claim above, and the H\"{o}lder inequality, we obtain
\begin{align*}
\bigg|\int_{\mathbb{R}^3}\partial_{\beta}^{\alpha}\Gamma(f)gdp\bigg| &=\sum_{1\leq i,j\leq5} \sum_{|\alpha_1|+|\alpha_2|+|\alpha_3|\leq|\alpha|}\int_{\mathbb{R}^3}\int_{0}^{1}\bigg|\partial_{\beta}^{\alpha_1}\mathcal{Q}_{i,j}^{\mathcal{F}}(\theta)\frac{\mathcal{F}(\theta)-\mathcal{F}(\theta)^2}{\sqrt{m-m^2}}(1-\theta)\bigg|d\theta	\cr
& \quad \times \langle\partial^{\alpha_2}f,e_i\rangle_{L^2_p}\langle\partial^{\alpha_3}f,e_j\rangle_{L^2_p}g dp\cr
&\leq C\sum_{|\alpha_1|+|\alpha_2|\leq|\alpha|}\int_{\mathbb{R}^3} e^{-\frac{a_{0}}{8}|p|^2} gdp||\partial^{\alpha_1}f||_{L_p^2}||\partial^{\alpha_2}f||_{L_p^2}\cr
&\leq C\sum_{|\alpha_1|+|\alpha_2|\leq|\alpha|}||e^{-\frac{a_{0}}{8}|p|^2}||_{L_p^2}|| g||_{L_p^2}||\partial^{\alpha_1}f||_{L_p^2}||\partial^{\alpha_2}f||_{L_p^2}\cr
&\leq C \sum_{|\alpha_1|+|\alpha_2|\leq|\alpha|}||\partial^{\alpha_1}f||_{L_p^2}||\partial^{\alpha_2}f||_{L_p^2}||g||_{L_p^2}.
\end{align*}
\end{proof}
%
%
%
%
%

\subsection{Local existence}
We now construct the local-in-time smooth solution:
\begin{theorem}\label{local}
Let $N\geq 4$ and $F_0=m+\sqrt{m-m^2}f_0\geq0$. Then there exists $M_0>0$, $T_*>0$, such that if $T_*\leq\frac{M_0}{2}$ and $\mathcal{E}(f_0)\leq\frac{M_0}{2}$, there exists a unique  non-negative local in time solution $f(x,p,t)$ of (\ref{f}) such that
\begin{enumerate}
\item The high order energy $\mathcal{E}(t)$ is uniformly bounded :
\begin{align*}
\sup_{0 \leq t\leq T_*}\mathcal{E}(t)\leq M_0.
\end{align*}
\item The high order energy $\mathcal{E}(t)$ is continuous in $[0,T_*)$.
\item The conservation laws (\ref{conservf}) hold for all $[0,T_*)$.
\item$(N,P,E)$ satisfies
\[
0<B(N,P,E)<\beta(-\ln3).
\]
\end{enumerate}
\end{theorem}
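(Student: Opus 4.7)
The plan is to build the local solution by a standard linear iteration, with the one non-standard ingredient being a self-consistency check that the equilibrium parameters are well defined at every step. Set $f^0\equiv f_0$ and, given $f^n$, define $f^{n+1}$ as the solution of the linear transport problem
\begin{align*}
\partial_t f^{n+1}+p\cdot\nabla_x f^{n+1}+f^{n+1}=Pf^n+\Gamma(f^n),\qquad f^{n+1}(x,p,0)=f_0(x,p),
\end{align*}
which can be integrated explicitly along characteristics. The source $Pf^n+\Gamma(f^n)$ is computed directly from $f^n$ via the macroscopic fields $(N^n,P^n,E^n)$ and equilibrium coefficients $(a^n,c^n)$; Theorem \ref{unique c} guarantees uniqueness of $(a^n,c^n)$ as soon as $0<B(N^n,P^n,E^n)<\beta(-\ln3)$. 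The induction hypothesis $\mathcal{E}(f^n(t))\leq M_0$ combined with Lemma \ref{esN}(4) gives
$\bigl|B(N^n,P^n,E^n)-N_0/E_0^{3/5}\bigr|\leq C\sqrt{M_0}$,
so for $M_0$ sufficiently small (depending only on the initial gap $\beta(-\ln3)-N_0/E_0^{3/5}>0$ provided by \eqref{suppini}) the iteration is well defined and $(a^n,c^n)$ lies in a compact neighbourhood of $(a_0,c_0)$ on which Corollary \ref{beta lemma} and the estimates of Section 4 apply.

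Next, I would propagate the uniform bound. Applying $\partial^\alpha_\beta$ to the iterative equation, pairing with $\partial^\alpha_\beta f^{n+1}$ in $L^2_{x,p}$, and noting that the transport part is skew-adjoint while the zeroth-order term on the left contributes the coercive quantity $\|\partial^\alpha_\beta f^{n+1}\|^2_{L^2_{x,p}}$, one obtains
\begin{align*}
\tfrac{1}{2}\tfrac{d}{dt}\|\partial^\alpha_\beta f^{n+1}\|^2_{L^2_{x,p}}+\|\partial^\alpha_\beta f^{n+1}\|^2_{L^2_{x,p}}\leq \tfrac{1}{2}\|\partial^\alpha_\beta f^{n+1}\|^2_{L^2_{x,p}}+C\mathcal{E}(f^n)+C\mathcal{E}(f^n)^2+C\mathcal{E}(f^n)^3,
\end{align*}
where the source is controlled by Cauchy--Schwarz for the linear piece $Pf^n$ (using boundedness of the projection weights) and by Proposition \ref{prop} for $\Gamma(f^n)$. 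Summing over $|\alpha|+|\beta|\leq N$ and integrating in time gives $\mathcal{E}(f^{n+1}(t))\leq \mathcal{E}(f_0)+CM_0 t$ on $[0,T_*]$, so the induction $\mathcal{E}(f^{n+1}(t))\leq M_0$ closes as soon as $\mathcal{E}(f_0)\leq M_0/2$ and $T_*\leq M_0/(2CM_0)=1/(2C)$, which can be absorbed into $T_*\leq M_0/2$ by adjusting $M_0$.

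For convergence, I would subtract the equations for $f^{n+1}$ and $f^n$ and run the analogous energy estimate on the difference $g^n=f^{n+1}-f^n$; the source becomes $P(f^n-f^{n-1})+\Gamma(f^n)-\Gamma(f^{n-1})$, which is quadratic in $(f^n,f^{n-1})$ via Proposition \ref{prop}, giving
\begin{align*}
\sup_{0\leq t\leq T_*}\|g^n(t)\|_{L^2_{x,p}}\leq \tfrac{1}{2}\sup_{0\leq t\leq T_*}\|g^{n-1}(t)\|_{L^2_{x,p}}
\end{align*}
after shrinking $T_*$ if necessary. Hence $\{f^n\}$ is Cauchy in $C([0,T_*];L^2_{x,p})$, and interpolating against the uniform $\mathcal{E}$-bound yields convergence in every $H^k_{x,p}$ below the top order. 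The limit $f$ solves \eqref{f}, and continuity of $\mathcal{E}(t)$ follows from the standard weak-strong argument combined with the energy identity. Non-negativity of $F=m+\sqrt{m-m^2}f$ is preserved along the iteration: writing $F^{n+1}=m+\sqrt{m-m^2}f^{n+1}$ and noting that the characteristic form of the iterated equation has a non-negative source $\tau^{-n}\mathcal{F}(F^n)\geq 0$ with a damping $e^{-t/\tau^n}$ factor, non-negativity of $F^{n+1}$ passes from $F_0\geq 0$ to the limit. Conservation laws (\ref{conservf}) follow by integrating the limiting equation against $\sqrt{m-m^2}$, $p\sqrt{m-m^2}$, $|p|^2\sqrt{m-m^2}$, and property (4) is preserved in the limit by Lemma \ref{esN}(4) applied to $f$.

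The principal obstacle is the coupling between the smallness parameter $M_0$ and the admissibility constraint $B(N^n,P^n,E^n)<\beta(-\ln3)$: unlike the classical BGK case, the iteration can break down in one step if the equilibrium parameters cannot be extracted from the macroscopic moments. Overcoming this requires choosing $M_0$ small enough that Lemma \ref{esN}(4) keeps $B$ inside the monotonicity range of $\beta$ established in Proposition \ref{betathm}, so that $c^n=\beta^{-1}(B(N^n,P^n,E^n))$ is defined uniquely in $(-\ln3,\infty)$ and the estimates of Section 4 for $(a^n,c^n)$ stay uniform along the whole iteration.
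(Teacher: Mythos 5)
Your overall strategy is the paper's: iterate a linear transport problem, propagate $\mathcal{E}(f^n)\le M_0$ by induction, and at each step use Lemma \ref{esN} (4) (with Lemma \ref{esac}), Theorem \ref{unique c} and Proposition \ref{betathm} to keep $0<B(N_n,P_n,E_n)<\beta(-\ln 3)$, so that $(a_n,c_n)$ with $c_n>-\ln 3$ are uniquely determined and the Section 4 estimates apply uniformly along the iteration; the contraction and limit passage are the same (omitted) standard steps as in the paper. There is, however, one concrete flaw: your non-negativity argument does not match the iteration you defined. The paper iterates at the level of $F$, see (\ref{iter}), so that $\big(\partial_t+p\cdot\nabla_x+\tau(F^n)^{-1}\big)F^{n+1}=\tau(F^n)^{-1}\mathcal{F}(F^n)\ge 0$, and $F^{n+1}\ge 0$ follows by Duhamel along characteristics; at the $f$-level this is precisely why $\Gamma_2$ is taken semi-implicitly, $\Gamma_2(f^n,f^{n+1})=\{\sum_i\mathcal{C}^{\tau}_i\langle f^n,e_i\rangle_{L^2_p}\}(Pf^n-f^{n+1})$, as in (\ref{fn}). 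Your scheme takes all of $\Gamma(f^n)$ explicitly, and then $F^{n+1}=m+\sqrt{m-m^2}f^{n+1}$ solves $\big(\partial_t+p\cdot\nabla_x+1\big)F^{n+1}=\tau(F^n)^{-1}\mathcal{F}(F^n)+\big(1-\tau(F^n)^{-1}\big)F^n$, whose source has no sign: the second term is only bounded by $C\sqrt{M_0}\,\big(e^{-a_0|p|^2-c_0}+\sqrt{M_0}\,e^{-\frac{a_0}{2}|p|^2-\frac{c_0}{2}}\big)$, and the tail $e^{-\frac{a_0}{2}|p|^2}$ dominates $\mathcal{F}(F^n)\sim e^{-a_n|p|^2}$ for large $|p|$ no matter how small $M_0$ is, so positivity does not propagate along your iteration as stated. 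The fix is either to adopt the paper's $F$-level (semi-implicit) iteration, or to prove $F\ge 0$ a posteriori for the limit, which solves (\ref{QBGK}) and hence has the mild form with non-negative source $\tau^{-1}\mathcal{F}(F)$.

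A smaller omission: in your energy inequality for $\partial^{\alpha}_{\beta}f^{n+1}$ with $|\beta|\ge 1$, commuting $\partial_{\beta}$ with $p\cdot\nabla_x$ produces terms $\partial_{\beta_1}p\cdot\nabla_x\partial^{\alpha}_{\beta-\beta_1}f^{n+1}$, i.e. derivatives of the unknown $f^{n+1}$ of total order $\le N$, which are not covered by the right-hand side $C\mathcal{E}(f^n)+C\mathcal{E}(f^n)^2+C\mathcal{E}(f^n)^3$ you wrote; likewise your $\Gamma_2$ (if taken semi-implicitly) contributes $f^{n+1}$-terms. After integrating in time these must be absorbed by taking $T_*$ small, which is exactly the prefactor $\big(1-CT_*-CT_*\sqrt{M_0}-CT_*M_0\big)$ in the paper's Lemma \ref{5.2}. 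With that correction your induction closes as in the paper, and the rest of your outline (Cauchy property in $C([0,T_*];L^2_{x,p})$, conservation laws by integrating against the collision invariants, and property (4) from Lemma \ref{esN} (4)) is consistent with the paper's omitted standard steps.
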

\begin{proof}
We define the iteration sequence $F^n$ as follow :
\begin{align}\label{iter}
\begin{split}
\partial_tF^{n+1}+p\cdot \nabla_xF^{n+1}&=\frac{1}{\tau(F^n)}(\mathcal{F}(F^n)-F^{n+1}),	\cr
F^{n+1}(x,p,0)&=F_0(x,p).
\end{split}
\end{align}
with $F^0(x,p,t)\equiv F_0(x,p)$. This is equivalent to
\begin{align}\label{fn}
\begin{split}
\partial_tf^{n+1}+p\cdot \nabla_xf^{n+1}+f^{n+1}&=Pf^n+\Gamma_1(f^n)+\Gamma_2(f^n,f^{n+1})+\Gamma_3(f^n),	\cr
f^{n+1}(x,p,0)&=f_0(x,p),
\end{split}
\end{align}
with $f^0(x,p,t)\equiv \frac{F_0-m}{\sqrt{m-m^2}}$.
Here, $\Gamma_1$ and $\Gamma_3$ are defined as in (\ref{Gamma}) whereas,  $\Gamma_2$ is defined slightly differently as
\[
\Gamma_2(f^n,f^{n+1})=\left\{\sum_{i=1}^5\mathcal{C}^{\tau}_i\langle f^n,e_i\rangle_{L^2_p}\right\}(Pf^n-f^{n+1}).
\]
The key ingredient is the uniform control of the size of high-order energy in each iteration step:
\begin{lemma}\label{5.2}
If $\mathcal{E}(f_0)<\frac{M_0}{2}$ then there exists $M_0>0$ and $T_*>0$ such that $\mathcal{E}(f^n(t))<M_0$
for all $n\geq0$ for $t\in [0,T_*]$.
\end{lemma}
\begin{proof}
We use induction. Assume we have obtained $f^n$ such that $\mathcal{E}(f^n(t))<M_0$
 on $[0,T_*]$, for sufficiently small $M_0$.
Then Lemma \ref{esN} (4) and Lemma \ref{esac} (1) imply that
\begin{align*}
0<B(N_n,P_n,E_n) < \beta(-\ln3)
\end{align*}
for sufficiently small $M_0$. Therefore, thanks to Theorem \ref{unique c}, we are able to find $a_n$ and $c_n$ such that $c_n>-\ln3$,  
which guarantees that $\mathcal{F}(F^n)$ is well-defined, and so is the iteration for $n+1$th step (\ref{iter}).
Then, applying the linearization argument in Section 3, we obtain (\ref{fn}).
Now, in view of Lemma \ref{esac} and Corollary \ref{beta lemma}, we see that $|\beta^{\prime}(c)|$ has a strictly positive lower bound, enabling one to compute derivatives of all the equilibrium coefficients by Lemma \ref{esdc} and \ref{esda}, and therefore, of $\mathcal{F}(F^n)$.
This implies that  $f^{n+1}$ also is smooth.
Hence, we can apply $\partial_{\beta}^{\alpha}$ on both sides of (\ref{fn}):
\begin{align*}
\partial_t\partial_{\beta}^{\alpha}f^{n+1}+p\cdot\nabla_x\partial_{\beta}^{\alpha}f^{n+1}+\partial_{\beta}^{\alpha}f^{n+1}&=-\sum_{i=1}^3 \partial^{\alpha+e_i}\partial_{\beta-e_i}f^{n+1} + \partial_{\beta}^{\alpha}Pf^n +\partial_{\beta}^{\alpha}\Gamma_1(f^n) \cr
&+ \partial_{\beta}^{\alpha}\Gamma_2(f^n,f^{n+1})+\partial_{\beta}^{\alpha}\Gamma_3(f^n),
\end{align*}
and take inner product with $\partial_{\beta}^{\alpha}f^{n+1}$. Then, a standard argument leads to
\begin{align*}
\big(1-CT_*-CT_*\sqrt{M_0}-CT_*M_0\,\big)\sup_{0 \leq t \leq T_*}\mathcal{E}(f^{n+1}(t)) \leq \Big(\,\frac{1}{2}+CT_*+CT_*\sqrt{M_0}+CT_*M_0\,\Big)M_0,
\end{align*}
which, for sufficiently small $M_0$, gives the desired result for $\mathcal{E}(f^{n+1})$.
This completes the proof of the lemma.
\end{proof}
With Lemma \ref{5.2}, the remaining proof for Theorem \ref{local} is standard. We omit it.
\end{proof}
%
%
%
%
%
%
%
%
%
\section{Proof of the main theorem}
Now, we have obtained most of the necessary estimates, and the remaining process is rather standard. We only sketch the proof.
\subsection{Coercivity of L}
We define $\bar{a}$, $\bar{b}$ and $\bar{c}$ as follows:
\begin{align*}
&\bar{a}(x,t)=\int_{\mathbb{R}^3}f\sqrt{m-m^2}dp,	\cr
&\bar{b}_i(x,t)=\int_{\mathbb{R}^3}fp_i\sqrt{m-m^2}dp, \quad  (i=1,2,3)	\cr &\bar{c}(x,t)=\int_{\mathbb{R}^3}f|p|^2\sqrt{m-m^2}dp,
\end{align*}
and
\begin{align*}
\tilde{P}f=\bar{a}(x,t)\sqrt{m-m^2}+\sum_{i}\bar{b}_i(x,t)p_i\sqrt{m-m^2}+\bar{c}(x,t)|p|^2\sqrt{m-m^2}.
\end{align*}
We then split $f$ into the macroscopic part $\tilde{P}f$ and the microscopic part $(I-\tilde{P})f$. Substituting this  in (\ref{f}), one gets
\begin{align*}
(\partial_t+p\cdot \nabla_x)(\tilde{P}f) = -(\partial_t+p\cdot \nabla_x)((I-\tilde{P})f)+L(I-\tilde{P})f+\Gamma(f).
\end{align*}
We then expand the l.h.s with respect to
\begin{align}\label{newbasis}
\{\sqrt{m-m^2},p_i\sqrt{m-m^2},p_ip_j\sqrt{m-m^2},p_i^2\sqrt{m-m^2},p_i|p|^2\sqrt{m-m^2}\},
\end{align}
for $i,j=1,2,3$ to write it as
\begin{align*}
\sum_{1\leq i\leq 3}\left\{\partial_{x_i}\bar{c}|p|^2+(\partial_{x_i}\bar{b}_i+\partial_t\bar{c})p_i^2+\sum_{ i<j\leq 3}(\partial_{x_i}\bar{b}_j+\partial_{x_j}\bar{b}_i)p_ip_j+(\partial_t\bar{b}_i+\partial_{x_i}\bar{a})+\partial_t\bar{a}\right\}\sqrt{m-m^2}.
\end{align*}
To arrive at the set of micro-macro equations:
\begin{lemma} $\bar{a}$, $\bar{b}$ and $\bar{c}$ satisfies the following system:
\begin{align}\label{sytem}
\begin{split}
\partial_t\bar{a} &= l_{\bar{a}}+h_{\bar{a}}, \cr
\partial_t\bar{b}_i + \partial_{x_i}\bar{a} &= l_{\bar{a}\bar{b}i}+h_{\bar{a}\bar{b}i},	\cr
\partial_{x_i}\bar{b}_j + \partial_{x_j}\bar{b}_i &= l_{ij}+h_{ij} \quad (i\neq j),	\cr
\partial_{x_i}\bar{b}_i+\partial_{t}\bar{c} &= l_{\bar{b}\bar{c}i} + h_{\bar{b}\bar{c}i},	\cr
\partial_{x_i}\bar{c} &= l_{\bar{c}i} + h_{\bar{c}i},
\end{split}
\end{align}
where $l_{\bar{a}},l_{\bar{a}\bar{b}i},l_{ij},l_{\bar{b}\bar{c}i},l_{\bar{c}i}$ are coefficient of the expansion of $-(\partial_t+p\cdot \nabla_x)((I-\tilde{P})f)+L(I-\tilde{P})f$ with respect to basis (\ref{newbasis}).
Similarly, $h_{\bar{a}},h_{\bar{a}\bar{b}i},h_{ij},h_{\bar{b}\bar{c}i},h_{\bar{c}i}$ are the coefficients of the expansion of $\Gamma(f)$ with respect to same basis.
\end{lemma}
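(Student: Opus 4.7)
The plan is to derive the micro-macro system directly from the perturbed equation (\ref{f}) by substituting $f = \tilde{P}f + (I-\tilde{P})f$, isolating the evolution of $\tilde{P}f$, and matching coefficients with respect to the basis (\ref{newbasis}).

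First I would verify the identity $L\tilde{P}f=0$. Since the range of $\tilde{P}$ is spanned by $\{\sqrt{m-m^2},\,p_i\sqrt{m-m^2},\,|p|^2\sqrt{m-m^2}\}$, which coincides with the five-dimensional null space of $L=P-I$ appearing in Definition \ref{Pfdef}, $L$ annihilates $\tilde{P}f$. Substituting the decomposition into (\ref{f}) and rearranging then yields the identity displayed just before the lemma:
\[
(\partial_t+p\cdot\nabla_x)\tilde{P}f = -(\partial_t+p\cdot\nabla_x)(I-\tilde{P})f+L(I-\tilde{P})f+\Gamma(f).
\]

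Next I would apply the product rule to the left-hand side and group by polynomial factor in $p$ to obtain
\begin{align*}
(\partial_t+p\cdot\nabla_x)\tilde{P}f &= \Big[\partial_t\bar{a}+\sum_i(\partial_t\bar{b}_i+\partial_{x_i}\bar{a})p_i+\sum_i(\partial_t\bar{c}+\partial_{x_i}\bar{b}_i)p_i^2\\
&\quad+\sum_{i<j}(\partial_{x_i}\bar{b}_j+\partial_{x_j}\bar{b}_i)p_ip_j+\sum_i(\partial_{x_i}\bar{c})p_i|p|^2\Big]\sqrt{m-m^2},
\end{align*}
exhibiting the left-hand side as a linear combination of the basis (\ref{newbasis}) with the coefficients appearing on the left of (\ref{sytem}). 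Since the thirteen polynomial factors $1,\,p_i,\,p_ip_j\,(i<j),\,p_i^2,\,p_i|p|^2$ are linearly independent (distinguishable by total degree and monomial structure within each degree), so are the basis functions themselves, and each coefficient in any expansion is uniquely determined.

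Because the above combination equals the right-hand side for every $(x,p,t)$, the right-hand side must also lie in the span of (\ref{newbasis}) pointwise in $(x,t)$ and admits a unique expansion in that basis. I would then define $l_{\bullet}$ as the coefficient of the appropriate basis element arising from the linear part $-(\partial_t+p\cdot\nabla_x)(I-\tilde{P})f+L(I-\tilde{P})f$, and $h_{\bullet}$ as the coefficient arising from $\Gamma(f)$; these can be extracted concretely by pairing both sides against a family biorthogonal to (\ref{newbasis}), whose existence follows from the linear independence. Term-by-term matching then produces exactly the five lines of (\ref{sytem}). The main point to track is the bookkeeping of which polynomial factor in $p$ on the left corresponds to which moment of the microscopic and nonlinear pieces on the right; no substantive analytic obstacle arises, as the argument is purely algebraic once the kernel identity $L\tilde{P}f=0$ has been recorded.
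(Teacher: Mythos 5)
Your proposal is correct and follows essentially the same route as the paper: substitute $f=\tilde{P}f+(I-\tilde{P})f$ into (\ref{f}), use that $L$ annihilates the macroscopic part since the range of $\tilde{P}$ lies in the span (\ref{null}) on which $P$ acts as the identity, expand $(\partial_t+p\cdot\nabla_x)\tilde{P}f$ over the thirteen linearly independent functions in (\ref{newbasis}), and match coefficients. Your remark that $l_{\bullet}$ and $h_{\bullet}$ are really extracted by pairing the linear part and $\Gamma(f)$ against a biorthogonal family (since those pieces need not individually lie in the span) is exactly the intended reading of the paper's terse "coefficients of the expansion," so no gap remains.
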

From (\ref{system}), it is now standard to derive the following full coercivity estimate for sufficiently small $\mathcal{E}(t)$:
\begin{align}\label{coer}
\begin{split}
\sum_{|\alpha|\leq N} \langle L\partial^{\alpha}f,\partial^{\alpha}f\rangle_{L^2_{x,p}} \leq
- \delta \sum_{|\alpha|\leq N} ||\partial^{\alpha}f(t)||_{L_{x,p}^2}^2 .
\end{split}
\end{align}

\subsection{Global existence}
Finally, we extend the local existence to the global one by closing the nonlinear energy estimate. Let $f$ be the smooth local in time solution constructed in Theorem \ref{local}. First we derive the energy estimate with $|\beta|=0$.  Applying $\partial^{\alpha}$ on each side of (\ref{f})
and taking inner product with $\partial^{\alpha}f$, we obtain
\begin{align*}
\frac{1}{2}\frac{d}{dt} ||\partial^{\alpha}f||_{L^2_{x,p}}^2 = \langle L\partial^{\alpha}f,\partial^{\alpha}f\rangle_{L^2_{x,p}} + \langle\partial^{\alpha}\Gamma(f),\partial^{\alpha}f\rangle_{L^2_{x,p}}.
\end{align*}
By using coercive estimates (\ref{coer})  and nonlinear estimates in Proposition \ref{prop}, we derive
\begin{align*}
\mathcal{E}_0^{\alpha} : \quad \frac{1}{2}\frac{d}{dt} ||\partial^{\alpha}f||_{L^2_{x,p}}^2 + \delta \sum_{|\alpha|\leq N}||\partial^{\alpha}f||_{L^2_{x,p}}^2 \leq C\sqrt{\mathcal{E}(t)}\mathcal{E}(t),
\end{align*}
For the energy estimate involving velocity derivatives, we apply $\partial_{\beta}^{\alpha}$ to (\ref{f}):
\begin{align*}
\left\{\partial_t +p \cdot \nabla_x +1\right\}\partial^{\alpha}_{\beta}f = \partial_{\beta_1}p \cdot \nabla_x \partial_{\beta-\beta_1}^{\alpha}f + \partial_{\beta}P\partial^{\alpha}f + \partial_{\beta}^{\alpha}\Gamma (f),
\end{align*}
and take inner product with $\partial_{\beta}^{\alpha}f$ to get
\begin{align*}
\mathcal{E}_{\beta}^{\alpha} : \quad \frac{1}{2}\frac{d}{dt} ||\partial_{\beta}^{\alpha}f||_{L^2_{x,p}}^2 + ||\partial_{\beta}^{\alpha}f||_{L^2_{x,p}}^2
&\leq   \frac{1}{2\epsilon}\sum_{i=1}^3||\partial_{\beta-e_i}^{\alpha+e_i}f||_{L^2_{x,p}}^2+ \frac{3\epsilon}{2}||\partial_{\beta}^{\alpha}f||_{L^2_{x,p}}^2 \cr
& \quad +\frac{C}{2\epsilon}||\partial^{\alpha}f||_{L^2_{x,p}}^2+\frac{C\epsilon}{2}||\partial_{\beta}^{\alpha}f||_{L^2_{x,p}}^2+C\sqrt{\mathcal{E}(t)}\mathcal{E}(t).
\end{align*}
For sufficiently small $\epsilon$,  $||\partial_{\beta}^{\alpha}f||_{L^2_{x,p}}^2$ terms in right hand side are absorbed in the good term of the l.h.s to yield
\begin{align*}
\mathcal{E}_{\beta}^{\alpha} : \quad \frac{1}{2}\frac{d}{dt} ||\partial_{\beta}^{\alpha}f||_{L^2_{x,p}}^2 + \frac{1}{2}||\partial_{\beta}^{\alpha}f||_{L^2_{x,p}}^2
&\leq   C_{\epsilon}\sum_{i=1}^3||\partial_{\beta-e_i}^{\alpha+e_i}f||_{L^2_{x,p}}^2 +C_{\epsilon}||\partial^{\alpha}f||_{L^2_{x,p}}^2\cr
&\quad +C\sqrt{\mathcal{E}(t)}\mathcal{E}(t).
\end{align*}
Then, we observe that right hand side of $\displaystyle\sum_{|\beta|=m+1}\mathcal{E}_{\beta}^{\alpha}$ can be controlled by the good terms of
\begin{align*}
C_m\sum_{|\beta|=m}\mathcal{E}^{\alpha}_{\beta} + C_m \sum_{|\alpha|\leq N}\mathcal{E}_0^{\alpha},
\end{align*}
for sufficiently large $C_m$. Therefore, by standard induction argument, we obtain
\begin{align*}
\sum_{\substack{|\alpha|+|\beta|\leq N \cr |\beta|\leq m}} \left\{C_m\frac{d}{dt}||\partial_{\beta}^{\alpha}f||_{L^2_{x,p}}^2+ \delta_m||\partial_{\beta}^{\alpha}f||_{L^2_{x,p}}^2 \right\} \leq C_N \sqrt{\mathcal{E}(t)}\mathcal{E}(t),
\end{align*}
for constant $C_m$ and $\delta_m$. Then by standard continuity argument we derive global in time existence for smooth solution for (\ref{f}). This completes the proof.\newline

{\bf Acknowledgement}
The work of S.-B. Yun was supported by Samsung Science and Technology Foundation under
Project Number SSTF-BA1801-02.

\bibliographystyle{amsplain}

\end{document}